\newtheorem{lemma}{Lemma}[section]
\newtheorem{proposition}[lemma]{Proposition}
\newtheorem{theorem}[lemma]{Theorem}
\newtheorem{corollary}[lemma]{Corollary}
\newtheorem{defn}[lemma]{Definition}
\newtheorem{prop}[lemma]{Proposition}
\newtheorem{thm}[lemma]{Theorem}
\newtheorem{cor}[lemma]{Corollary}
\theoremstyle{definition}
\newtheorem{definition}[lemma]{\sl Definition}
\newtheorem{notn}[lemma]{Notation}
\theoremstyle{remark}
\newcommand{\Hom}{\operatorname{Hom}}
\newcommand{\End}{\operatorname{End}}
\newcommand{\Ext}{\operatorname{Ext}}
\newcommand{\Cdim}{\operatorname{Cdim}}
\newcommand{\uExt}{\underline{\operatorname{Ext}}}
\newcommand{\ra}{\operatorname{\rightarrow}}
\newcommand{\uotimes}{\otimes}
\newcommand{\uTor}{\operatorname{Tor}}
\newcommand{\cM}{\mathcal{M}}
\numberwithin{equation}{section}
\begin{document}

\title{Species and non-commutative $\mathbb{P}^1$'s over non-algebraic bimodules}
\author{D. Chan}
\address{University of New South Wales}
\email{danielc@unsw.edu.au}

\author{A. Nyman}
\address{Western Washington University}
\email{adam.nyman@wwu.edu}
\keywords{}
\thanks{2010 {\it Mathematics Subject Classification. } Primary 16G20; Secondary 14A22}

\begin{abstract}
We study non-commutative projective lines over not necessarily algebraic bimodules. In particular, we give a complete description of their categories of coherent sheaves and show they are derived equivalent to certain bimodule species. This allows us to classify modules over these species and thus generalize, and give a geometric interpretation for, results of C. Ringel \cite{species}.
\end{abstract}

\maketitle

\pagenumbering{arabic}

Throughout this paper, $K,K_{0}$, and $K_{1}$ denote fields of characteristic $\neq 2$.

\section{Introduction}

The theory of tilting bundles has grown remarkably over the last few decades, connecting two otherwise unrelated fields of mathematics, algebraic geometry and representation theory. The simplest example occurs with the projective line $\mathbb{P}^1 = \mathbb{P}(K^2) $ which is derived equivalent to the Kronecker algebra or species
$\Lambda =
\left(
\begin{smallmatrix}
K &  K^2\\ 0 & K
\end{smallmatrix}
\right)$.
Since both ${\sf Coh } \mathbb{P}^1$ and $\Lambda$ are hereditary, this means that the indecomposable $\Lambda$-modules are in bijective correspondence with indecomposable sheaves on $\mathbb{P}^1$. More precisely, the indecomposable irregular modules correspond to the line bundles on $\mathbb{P}^1$ whilst the regular modules correspond to the torsion sheaves.

In the 1970's Dlab and Ringel \cite{dlabringel}, \cite{species} studied generalizations of the Kronecker algebra above where $K^2$ is replaced by a $K_{0}-K_{1}$-bimodule $V$ which is constant dimension two on each side (see Section~\ref{section.twosided}). Corresponding bimodule generalizations $\mathbb{P}^{nc}(V)$ of the projective line appeared much later with the work of Patrick \cite{P}, Van den Bergh \cite{vandenbergh}, and Nyman \cite{nyman}. Here one defines a non-commutative version of the symmetric algebra $A = \mathbb{S}^{nc}(V)$, which is a $\mathbb{Z}$-indexed algebra closely related to Dlab and Ringel's preprojective algebra \cite{dlabringel2}. Using the non-commutative projective geometry of Artin-Zhang \cite{az}, we may then define $\mathbb{P}^{nc}(V) =  {\sf Proj}\,A$ to be a certain quotient category of the category of graded $A$-modules (see Section~\ref{section.derived}). However, most research to date has concentrated on the case where the bimodule $V$ is {\em algebraic} in the sense that there exists a common subfield $k$ of $K_{0}$ and $K_{1}$ which acts centrally on $V$ and $K_{i}/k$ is finite. This hypothesis is usually included because it guarantees Hom-finiteness and thus Serre duality.

The goal of this paper is to study non-commutative projective lines and bimodule species without assuming the algebraic hypothesis. In particular, we will re-interpret and extend classic results  of Ringel \cite{species} by exploring derived equivalences between $\mathbb{P}^{nc}(V)$ and the corresponding bimodule species
$$\Lambda =
\left(
\begin{smallmatrix}
K_{0} & V \\ 0 & K_{1}
\end{smallmatrix}
\right).
$$
It seems that the non-commutative projective line is the more amenable object to study and hence our point of view is that non-commutative projective geometry provides a profitable way to study and understand bimodule species. This is partly because many of the familiar notions of algebraic geometry (torsion, dimension, Hilbert functions) extend to sheaves on $\mathbb{P}^{nc}(V)$. For example, each unit $e_i\in A_{ii}, i \in \mathbb{Z}$ gives rise to a line bundle $\mathcal{A}_i$ which is the analogue of $\mathcal{O}(-i)$. We have the following generalization of Grothendieck's splitting theorem which recovers some results in \cite{dlabringel} and \cite{dlabringel2}, and generalizes \cite[Theorem 3.14]{nyman}.

\begin{thm} \label{thm.1}
Any coherent sheaf on $\mathbb{P}^{nc}(V)$ is a direct sum of a torsion free sheaf with a torsion sheaf. Every torsion free sheaf is a direct sum of $\mathcal{A}_i$. In particular, the indecomposable irregular $\Lambda$-modules are induced by multiplication in the non-commutative symmetric algebra and are of the form
$$ A_{i0} \otimes_K V \ra A_{i1} \quad \text{or} \quad
A_{0i-2}^* \otimes_K V \ra A_{1i-2}^*.$$
\end{thm}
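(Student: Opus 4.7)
The plan is to establish the three assertions of the theorem in sequence. For the decomposition of torsion-free sheaves, I would adapt Grothendieck's classical proof for $\mathbb{P}^{1}$ to the non-commutative setting. Given a torsion-free coherent sheaf $\mathcal{F}$, first set up notions of rank and Hilbert function from the $\mathbb{Z}$-algebra structure of $A = \mathbb{S}^{nc}(V)$, and proceed by induction on rank. In rank one, argue that every torsion-free rank-one sheaf is isomorphic to some $\mathcal{A}_{i}$ by matching Hilbert functions. For higher rank, pick an index $n$ realizing the maximal ``degree'' among line sub-bundle inclusions $\mathcal{A}_{n} \hookrightarrow \mathcal{F}$, verify that the quotient $\mathcal{F}/\mathcal{A}_{n}$ is again torsion-free, and split $\mathcal{A}_{n}$ off as a direct summand using the key vanishing $\Ext^{1}(\mathcal{A}_{m}, \mathcal{A}_{n}) = 0$ for $m \geq n$. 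Induction on rank finishes the argument.

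For the decomposition of an arbitrary coherent $\mathcal{F}$, let $T(\mathcal{F})$ be its maximal torsion subsheaf; the quotient is torsion-free and hence by the previous step a direct sum of line bundles $\mathcal{A}_{i}$. To show the defining sequence splits, it suffices to prove $\Ext^{1}(\mathcal{A}_{i}, T) = 0$ for every torsion sheaf $T$. A devissage on $T$, together with an explicit description of the simple torsion sheaves in $\mathbb{P}^{nc}(V)$ as cokernels of morphisms between line bundles, should yield this vanishing.

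The final assertion follows from the derived equivalence between ${\sf Coh}\,\mathbb{P}^{nc}(V)$ and the category of $\Lambda$-modules announced in the abstract, which should be realized by the tilting bundle $\mathcal{A}_{0} \oplus \mathcal{A}_{1}$. Under this equivalence, each line bundle $\mathcal{A}_{i}$ corresponds to an indecomposable irregular $\Lambda$-module, and direct computation of $\Hom(\mathcal{A}_{0}, \mathcal{A}_{i})$ and $\Hom(\mathcal{A}_{1}, \mathcal{A}_{i})$ using the graded pieces of $A$ identifies it: in the range of $i$ where $\mathcal{A}_{i}$ is preprojective, one recovers $A_{i0} \otimes_{K} V \ra A_{i1}$ with structure map given by multiplication in $A$, while in the preinjective range one recovers the dual form $A_{0i-2}^{*} \otimes_{K} V \ra A_{1i-2}^{*}$.

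The main obstacle is the absence of the algebraic hypothesis. Prior treatments rely on this hypothesis to obtain Hom-finiteness and Serre duality, which supply the crucial Ext vanishings between line bundles and underpin Riemann-Roch style arguments. Without it, one must work directly with the combinatorics of $A = \mathbb{S}^{nc}(V)$, exploiting only the two-sided constant dimension two property of $V$. Constructing explicit finite resolutions of each $\mathcal{A}_{i}$ by the other line bundles, from which the requisite Ext vanishings can be read off without any finiteness hypothesis, appears to be the key technical task on which the whole argument hinges.
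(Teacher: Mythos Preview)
Your overall strategy for the third assertion matches the paper's: the tilting bundle $\mathcal{A}_0 \oplus \mathcal{A}_1$ gives the derived equivalence, and the explicit forms of $\Phi(\mathcal{A}_i)$ are read off from the cohomology of line bundles, which in turn are computed from the Euler exact sequence of $A$. Your closing remark that the crucial technical input is an explicit resolution of each $\mathcal{A}_i$ by other line bundles is exactly right; this is the Euler sequence $0\to\mathcal{A}_{i+2}\to\mathcal{A}_{i+1}^{\oplus 2}\to\mathcal{A}_i\to 0$, from which all the needed $\Ext$ groups between line bundles follow with no finiteness hypothesis.

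For Grothendieck splitting you take a genuinely different route from the paper. The paper does \emph{not} run the classical induction on rank. Instead it embeds a torsion-free noetherian $A$-module $M$ into some $(e_iA)^{\oplus n}$ with torsion cokernel $N$, replaces $M$ by the preimage of the graded-torsion part of $N$ so that the new cokernel has no graded torsion, and then invokes $\operatorname{gl.dim}A=2$ together with the criterion ``$\underline{\Hom}_A(\bar{A},N)=0 \Rightarrow \operatorname{pd}_A N\leq 1$'' to conclude that $M$ is projective, hence a sum of $e_iA$'s. This homological shortcut avoids rank, Riemann--Roch, and any discussion of maximal line sub-bundles. Your approach can be made to work, but several steps you list as routine (rank-one torsion-free $\Rightarrow$ line bundle; quotient by a maximal $\mathcal{A}_n$ stays torsion-free) need genuine arguments in the $\mathbb{Z}$-indexed setting and ultimately rest on the same Hilbert-function and $\Ext$ computations the paper establishes anyway.

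The real gap is in your argument that torsion splits off. You propose d\'evissage through simple torsion sheaves presented as cokernels of maps of line bundles. The paper does not do this, and for good reason: the classification of simple torsion sheaves is itself nontrivial (it occupies the later subsections and uses the normal element $g$ and the twisted ring $A/(g)$), and even granting such presentations, showing $\Ext^1(\mathcal{A}_i,S)=0$ from the long exact sequence requires controlling the induced map on $\Ext^1$, which is not automatic. The paper instead proves a version of \emph{classical Serre duality} valid without the algebraic hypothesis,
\[
\Ext^{1}_{\mathbb{P}^{nc}(V)}(\mathcal{A}_i,\mathcal{M}) \;\cong\; {}^{*}\Hom_{\mathbb{P}^{nc}(V)}(\mathcal{M},\mathcal{A}_{i+2}),
\]
and then the vanishing $\Ext^1(\mathcal{A}_i,\mathcal{M}')=0$ for torsion $\mathcal{M}'$ is immediate since there are no nonzero maps from a torsion sheaf into a line bundle. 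So the missing idea in your outline is precisely that Serre duality survives in this generality; once you have it, the splitting is a one-line argument and no d\'evissage is needed.
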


It is in the study of regular $\Lambda$-modules or equivalently, torsion sheaves on $\mathbb{P}^{nc}(V)$, that non-commutative projective geometry seems to have something really new to contribute, namely, the notion of a point scheme or commutative locus. We know of no counterpart in the theory of finite dimensional algebras. In our context, this corresponds to the fact that there is a naturally defined normal family of elements $g = \{g_i\} \in A_{ii+\delta}$ where $\delta = 1$ when $V$ is non-simple and $\delta = 2$ when $V$ is simple. Geometrically, we think of $g=0$ as defining a closed subscheme of $\mathbb{P}^{nc}(V)$ and its complement as being affine open. This allows us to geometrically interpret the next result.
\begin{thm}  \label{thm.2}
The category of regular $\Lambda$-modules and the category of torsion coherent sheaves on $\mathbb{P}^{nc}(V)$ are both equivalent to the product category ${\sf fl}\, A[g^{-1}]_{00} \times {\sf T}$ where ${\sf fl}\, A[g^{-1}]_{00}$ is the category of finite length $A[g^{-1}]_{00}$-modules and ${\sf T}$ is uniserial. The decomposition corresponds to the $g$-torsion free and $g$-torsion subcategories of ${\sf Coh}\, \mathbb{P}^{nc}(V)$.
\end{thm}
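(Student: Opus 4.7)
The plan is to begin by identifying regular $\Lambda$-modules with torsion coherent sheaves on $\mathbb{P}^{nc}(V)$ via the derived equivalence alluded to in the introduction. By Theorem~\ref{thm.1} every torsion free sheaf on $\mathbb{P}^{nc}(V)$ is a direct sum of line bundles $\mathcal{A}_i$, and these correspond under the equivalence to the indecomposable irregular $\Lambda$-modules explicitly described there. Since $\Lambda$ is hereditary and the classes of regular and irregular indecomposables partition its finite-dimensional module category, an indecomposable $\Lambda$-module is regular precisely when it corresponds to an indecomposable torsion sheaf. Extending additively gives the first half of the claim.

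Next I would establish the $g$-torsion decomposition of torsion sheaves. Because the family $g = \{g_i\}\in A_{i\,i+\delta}$ is normal, left multiplication by $g$ defines a natural endomorphism of every coherent sheaf, up to a twist by the automorphism of $A$ that conjugates $g$. Call $\mathcal{F}$ $g$-torsion if some power of $g$ annihilates it and $g$-torsion free if multiplication by $g$ acts injectively on $\mathcal{F}$. For a torsion (i.e.\ finite length) sheaf the ascending chain of kernels of $g^n$ stabilizes, and a Fitting-type argument yields a canonical direct sum decomposition $\mathcal{F} = \mathcal{F}_{g\text{-tor}} \oplus \mathcal{F}_{g\text{-tf}}$. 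This gives the product structure on the category of torsion sheaves at the level of objects; the vanishing of $\Hom$ between the two factors (either by an endomorphism argument or by using that one factor is annihilated by some power of $g$ and the other is not) upgrades the decomposition to a product of categories.

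The next step is to identify the two factors. Inverting the normal family $g$ produces a $\mathbb{Z}$-indexed algebra $A[g^{-1}]$ that is $\delta$-periodic, so the diagonal piece $A[g^{-1}]_{00}$ is an honest associative ring. A localization-type functor $\mathcal{F} \mapsto \Hom(\mathcal{A}_0,\mathcal{F})[g^{-1}]$ should then identify the $g$-torsion free torsion sheaves with $\mathsf{fl}\,A[g^{-1}]_{00}$, the inverse functor being the obvious ``sheafification'' that reads a $A[g^{-1}]_{00}$-module as a graded $A[g^{-1}]$-module via the periodicity. For the $g$-torsion factor $\mathsf{T}$, I would analyze the sheaves supported on the closed subscheme cut out by $g=0$: there is essentially one simple object $\mathcal{S}$, realized as a quotient of some $\mathcal{A}_i$ by the image of multiplication by $g$, and every object of $\mathsf{T}$ should turn out to be an iterated self-extension of $\mathcal{S}$, giving a uniserial category.

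The main obstacle is proving the uniseriality of $\mathsf{T}$ in the absence of Serre duality, which is the usual tool in the algebraic setting for controlling $\Ext^1(\mathcal{S},\mathcal{S})$. In its place one should compute extensions directly from the short exact sequence $0 \to \mathcal{A}_i \to \mathcal{A}_i \to \mathcal{S} \to 0$ induced by multiplication by $g$, and verify that the resulting self-extension group is one-dimensional on both sides over the skew field $\End(\mathcal{S})$. Once this extension computation is in hand, the uniserial structure of $\mathsf{T}$, and hence the full decomposition of the theorem, follows by an induction on composition length.
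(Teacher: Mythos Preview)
Your outline has the right architecture, but it diverges from the paper's argument in two significant ways and rests on one misconception.

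The misconception is that Serre duality is unavailable. While $\mathbb{P}^{nc}(V)$ is not Hom-finite and so admits no Serre \emph{functor}, the paper proves a classical Serre duality $\Ext^{1-p}(\mathcal{A}_i,\mathcal{M}) \cong {}^*\Ext^p(\mathcal{M},\mathcal{A}_{i+2})$ (Theorem~\ref{thm.serre}) and a variant for $\mathcal{B}_i = \mathcal{A}_i/\mathcal{A}_ig$ (Proposition~\ref{prop.SerreAmodg}), and these are its principal tools. In particular, Serre duality is used to show that torsion sheaves have constant Hilbert function (Corollary~\ref{cor.hilbconstant}), to compute $\Ext^1(\mathcal{B}_0,\mathcal{B}_0)$ for uniseriality (Corollary~\ref{cor.uniserial}), and to prove $\Ext^1(\mathcal{N},\mathcal{B}_0) \cong \Hom(\mathcal{B}_0,\mathcal{N})^* = 0$ for the splitting (Theorem~\ref{thm.splitcategory}). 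Note also that the paper's order is the reverse of yours: uniseriality of $\mathsf{T}$ is established \emph{first} and then fed into the splitting argument, since knowing that every $g$-torsion sheaf is an iterated self-extension of $\mathcal{B}_0$ reduces the Ext vanishing to a single case.

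Your Fitting approach is a legitimate alternative for the splitting, but two points need more care than you give them. First, $g$ is not an endomorphism of $\mathcal{M}$ but a map $\mathcal{M}^{\phi^{-1}}(-\delta) \to \mathcal{M}$; the stabilized image $\mathcal{M}g^N$ and the stabilized kernel are both honest subobjects of $\mathcal{M}$, and one can check they are complementary (the compatibility $\phi(g)=g$ makes the iterates behave well), but this deserves an argument rather than an appeal to the usual lemma. Second, Fitting requires torsion sheaves to have finite length, and the paper's proof of this fact (via the constant Hilbert function, Corollary~\ref{cor.hilbconstant}) already invokes Serre duality---so your route does not truly bypass it. Finally, the defining sequence for your simple object should read $0 \to \mathcal{A}_{i+\delta} \xrightarrow{g_i} \mathcal{A}_i \to \mathcal{S} \to 0$, not $\mathcal{A}_i \to \mathcal{A}_i$.
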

\noindent
For $V$ non-simple, this is an old result of Ringel's \cite[Section 7.4]{species} and he states quite explicitly that the simple case seems hard. Furthermore, the regular modules in ${\sf T}$ can be ``read off'' the non-commutative symmetric algebra, just as in the commutative case. There are also explicit descriptions of regular modules coresponding to the finite length $A[g^{-1}]_{00}$-modules in Section~\ref{subsec.regular}.

In Section~\ref{sec.torsion} we review $\mathbb{Z}$-indexed algebras and localization theory for them. In Section \ref{section.twosided}, we recall the definition of the non-commutative symmetric algebra from \cite{vandenbergh} and establish the all important Euler exact sequence for them. In Section~\ref{sec.normal} we construct the normal family of elements used in Theorem~\ref{thm.2}. Our construction is suggested by the theory of non-commutative $\mathbb{P}^1$-bundles as developed in \cite{vandenbergh}, but is purely algebraic, in part because the geometric theory of point schemes does not carry over naively to our case. Following standard methodology in non-commutative projective geometry as introduced by Artin-Tate-Van den Bergh \cite{atv1}, we study $A/(g)$ to show that $\mathbb{S}^{nc}(V)$ is noetherian in Section~\ref{section.twisted}. The non-commutative projective line can be defined using the preprojective algebra $\Pi(V)$, but we prefer to use $\mathbb{S}^{nc}(V)$ for a number of reasons. Just like the preprojective algebra, it has good homological properties being Auslander regular of dimension two (proved in Sections~\ref{sec.domain},\ref{sec.Ausreg}) and also a domain. Furthermore, $\Pi(V)$ is in some sense a non-commutative 2-Veronese of $\mathbb{S}^{nc}(V)$ (Proposition \ref{prop.preprojective}) so it is easier to recover the former from the latter. The homological properties of $\mathbb{S}^{nc}(V)$ allow us to conclude that ${\sf Coh}\,\mathbb{P}^{nc}(V)$ is hereditary (Corollary \ref{cor.projAhereditary}), a result which enables us to identify the derived category with the repetitive category. In Section~\ref{section.derived} we compute the cohomology of line bundles on $\mathbb{P}^{nc}(V)$ and use this to invoke tilting theory to show $\mathbb{P}^{nc}(V)$ and $\Lambda$ are derived equivalent. Although a Serre functor is out of the question, we do prove a version of classical Serre duality in Section~\ref{sec.Serre}. Section~\ref{sec.classify} is devoted to classifying sheaves on $\mathbb{P}^{nc}(V)$ and $\Lambda$-modules. In particular, we prove Theorems \ref{thm.1} and \ref{thm.2} above.


\section{$\mathbb{Z}$-indexed algebras and torsion theory}  \label{sec.torsion}

In this section, we recall the notions of indexed algebras and indexed analogues of various ring theoretic concepts such as localization and bimodules.

Let $I$ be a set of indices. Recall (from \cite[Section 2]{quadrics}) that an {\em $I$-indexed algebra} $D$ is a pre-additive category whose objects are indexed by $I$ and denoted (for reasons given below) $\{\mathcal{O}(-i)\}_{i \in I}$, and whose morphisms are denoted $D_{ij} := \operatorname{Hom}(\mathcal{O}(-j), \mathcal{O}(-i))$. If the category is in fact $k$-linear for some field $k$, then we say $D$ is an {\em $I$-indexed $k$-algebra}. We will often abuse terminology and call the ring $D = \oplus_{i,j}D_{ij}$ with multiplication given by composition an $I$-indexed algebra. The case we are most interested in is $I = \mathbb{Z}$. We let $e_{i}$ denote the identity in $D_{ii}$. We note that in the literature, $\mathbb{Z}$-indexed algebras are more often called $\mathbb{Z}$-algebras.

The example to keep in mind comes from projective geometry. Let $X$ be the projective line, or more generally any projective variety embedded in projective space, let $I = \mathbb{Z}$, and let $\mathcal{O}(i)$ denote the $i$-th tensor powers of the tautological line bundle.  Then $D_{ij} = \Hom_{\mathcal{O}_X}(\mathcal{O}(-j), \mathcal{O}(-i))$.

In the indexed setting, unlike the graded one, all objects are graded and it is unnatural to form ungraded versions. For example, let $D$ be an $I$-indexed algebra. A {\em (graded) right $D$-module} is a graded abelian group $M = \oplus_{i \in I} M_i$ with multiplication maps $M_i \times D_{ij} \ra M_j$ satisfying the usual module axioms (see \cite{quadrics} for more details). We let ${\sf Gr }D$ denote the category of graded right $D$-modules and we let $D-{\sf Gr}$ denote the category of graded left $D$-modules.

As for rings, one way to see the symmetry between left and right modules is to introduce the {\em opposite algebra $D^{op}$} which is just the opposite category. Then left $D$-modules correspond to right $D^{op}$-modules in the usual way.

Let $D_{ij}$ be a $\mathbb{Z}$-indexed algebra. We say that it is a {\it domain} if i) for all $i,j$ and non-zero $a \in D_{ij}$, we have that $a$ is a non-zero-divisor, i.e. if $b \in D_{jl}, c \in D_{hi}$ is non-zero, then $ab, ca \neq 0$, and ii) $D_{ii+1} \neq 0$. The second condition will become clear when we construct the indexed ring of fractions.

As noted in \cite{crem}, the theory of Ore sets and Ore localization works fine for $\mathbb{Z}$-indexed algebras (recall that the construction of the derived category by inverting quasi-isomorphisms is such a general case of localization). As usual, a $D$-module is {\it uniform} if any two non-zero submodules have non-zero intersection, or equivalently, any non-zero submodule is essential. Our analogue of the ring of fractions is given in the next result.

\begin{proposition} \label{prop.ringquotients}
Let $D$ be a domain such that $e_iD$ is uniform for every $i$. Then the set of non-zero elements forms a right Ore set. Inverting these elements gives a $\mathbb{Z}$-indexed algebra $Q$ which we call the {\em (right) ring of fractions}.
\end{proposition}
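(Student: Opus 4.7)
The plan is to verify the standard right Ore conditions for the set $S$ of nonzero homogeneous elements of $D$, in the form appropriate for $\mathbb{Z}$-indexed algebras. Because indexed multiplication $D_{ki} \cdot D_{ij} \subseteq D_{kj}$ is only nonzero when inner indices match, the right Ore condition takes the form: given $a \in D_{il}$ and nonzero $s \in D_{ij}$ (both elements of $e_i D = \bigoplus_k D_{ik}$), there exist $a' \in D_{jm}$ and nonzero $s' \in D_{lm}$ with $as' = sa'$.

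Multiplicative closure is immediate: a product of nonzero homogeneous elements with matching inner indices is nonzero since the second factor is a non-zero-divisor by the domain hypothesis. The reversibility axiom is equally painless: if $ta = 0$ with $t \in S$, then $a = 0$, so $as = 0$ trivially for any $s \in S$.

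The Ore condition itself is where the uniformity hypothesis enters. When $a = 0$ there is nothing to do: take $s' = e_l$ and $a' = 0$, noting that $e_l \neq 0$ because $e_l = 0$ would force $D_{l,l+1} = e_l \cdot D_{l,l+1} = 0$, contradicting the domain hypothesis. When $a \neq 0$, the right submodules $aD$ and $sD$ of $e_i D$ are both nonzero (they contain $a$ and $s$), so uniformity of $e_i D$ forces $aD \cap sD$ to contain a nonzero homogeneous element $x \in D_{im}$ for some $m$. Writing $x = as' = sa'$ with $s' \in D_{lm}$ and $a' \in D_{jm}$ recovers the Ore data, and $s'$ is nonzero because $as' = x \neq 0$ and $a$ is a non-zero-divisor.

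Granting these Ore conditions, the standard localization procedure, carried out for $\mathbb{Z}$-indexed algebras exactly as in \cite{crem}, produces $Q = S^{-1} D$: it has the same objects as $D$, and its morphisms $Q_{ij}$ are equivalence classes of right fractions $bs^{-1}$ with $b \in D_{in}$ and nonzero $s \in D_{jn}$ for some common $n$. There is no serious obstacle in this proof: the only step carrying content is the translation of uniformity into the existence of common right multiples, which is essentially a restatement of the hypothesis.
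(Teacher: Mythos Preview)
Your proof is correct and follows essentially the same approach as the paper: the key step in both is that, for $a \neq 0$, the nonzero submodules $aD$ and $sD$ of the uniform module $e_iD$ must intersect nontrivially, yielding the required common right multiple. You are slightly more thorough than the paper in spelling out multiplicative closure, reversibility, and the nonvanishing of $e_l$ in the $a=0$ case, but the substance is the same.
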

\noindent
\textbf{Remark} Actually, we may weaken the hypotheses on $e_i D$ to only assume they all have the same Goldie rank. We will not need this result.
\begin{proof}
 Suppose $a_{is} \in D_{is}, x_{ir} \in D_{ir} \backslash \{0\}$. We need to find a non-zero $y \in e_s D$ and $b \in e_r D$ such that $a_{is} y =  x_{ir} b$. If $a_{is} = 0$ we need only set $b=0$ (with $y$ arbitrary) whilst if $a_{is} \neq 0$ then the result follows as $a_{is} e_s D, x_{ir} e_r D$ are non-zero submodules of the uniform module $e_i D$.
\end{proof}

If the hypotheses of the proposition hold, we say $D$ {\em has a right ring of fractions}. If also all the $D e_i$ are uniform left modules, then we say $D$ {\em has a ring of fractions}.
Rings of fractions have the usual nice properties.

\begin{proposition}  \label{prop.structureQ}
Suppose $D$ has a right ring of fractions $Q$.
\begin{enumerate}
 \item The $Q_{ii}$ are isomorphic division rings and $Q_{ij}$ is a $Q_{ii}-Q_{jj}$-bimodule which is 1-dimensional on each side and hence gives a Morita equivalence between $Q_{ii}$ and $Q_{jj}$.
 \item Let $M$ be a right $Q$-module. Then we have an isomorphism of $Q$-modules $\phi: M_0 \otimes_{Q_{00}} e_0 Q \ra M$ given by multiplication $M_0 \otimes_{Q_{00}} Q_{0j} \ra M_j$.
\end{enumerate}
\end{proposition}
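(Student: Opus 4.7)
My plan is to leverage the Ore-localisation framework already set up in Proposition~\ref{prop.ringquotients}: because $D$ is a domain, the set of non-zero elements inverts to a set of units in $Q$, so each $Q_{ii}$ is automatically a division ring. For part (1), the first task is to establish that $Q_{ij}\neq 0$ for every $i,j$. The domain assumption $D_{ii+1}\neq 0$ propagates by iterated products (non-zero times non-zero is non-zero in $D$) to give a non-zero element in every $D_{ij}$ with $j\ge i$; for $j<i$ we obtain a non-zero element of $Q_{ij}$ as the inverse of any non-zero element of $D_{ji}$. Choosing a non-zero, hence invertible, $q\in Q_{ij}$, conjugation $x\mapsto q^{-1}xq$ produces a ring isomorphism $Q_{ii}\xrightarrow{\sim} Q_{jj}$. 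Any other element $q'\in Q_{ij}$ factors as $q'=(q'q^{-1})\,q = q\,(q^{-1}q')$ with $q'q^{-1}\in Q_{ii}$ and $q^{-1}q'\in Q_{jj}$, so $Q_{ij}=Q_{ii}q = qQ_{jj}$ is one-dimensional on each side. The Morita equivalence then drops out: the multiplication maps $Q_{ij}\otimes_{Q_{jj}}Q_{ji}\to Q_{ii}$ and $Q_{ji}\otimes_{Q_{ii}}Q_{ij}\to Q_{jj}$ send $q\otimes q^{-1}$ and $q^{-1}\otimes q$ to the identities, so are isomorphisms of one-dimensional bimodules.

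For part (2), associativity of the $Q$-action on $M$ makes $\phi$ a homomorphism of right $Q$-modules, so it suffices to check that the degree-$j$ component $\phi_j:M_0\otimes_{Q_{00}}Q_{0j}\to M_j$ is bijective. Fix any non-zero $q\in Q_{0j}$. By part (1), $Q_{0j}$ is free of rank one over $Q_{00}$ with basis $q$, so the map $M_0\to M_0\otimes_{Q_{00}}Q_{0j}$, $m\mapsto m\otimes q$, is an isomorphism; composing with $\phi_j$ identifies the question with whether right multiplication by $q$ is a bijection $M_0\to M_j$, which it is, since $q^{-1}\in Q_{j0}$ provides a two-sided inverse.

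The main obstacle I anticipate is the foundational one: verifying that $Q_{ij}$ is non-zero in every bidegree (in particular when $j<i$) and that non-zero elements of $D$ genuinely become two-sided units of $Q$. Both rest on the Ore-localisation theory referenced in the excerpt, together with the observation that $e_j\in D_{jj}$ is non-zero (else $D_{jj+1}=0$), so $d\cdot d^{-1}=e_j\neq 0$ forces $d^{-1}\neq 0$ for any non-zero $d$. Once this is in place, everything reduces to elementary manipulations with one-dimensional bimodules over division rings.
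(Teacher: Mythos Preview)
Your proposal is correct and follows essentially the same route as the paper's own (very terse) proof: the paper simply remarks that the argument ``follows the usual ring theory proof'' and only highlights that condition (ii) in the definition of a domain guarantees $D_{ij}\neq 0$ for $i<j$, after which conjugation by a non-zero (hence invertible) element of $Q_{ij}$ yields the isomorphism $Q_{ii}\cong Q_{jj}$. You have supplied exactly the details the paper omits, including the one-dimensionality of $Q_{ij}$ and the verification of part~(2), and your identification of the key foundational point---that $Q_{ij}\neq 0$ in all bidegrees---matches what the paper singles out as the place where condition (ii) enters.
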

\begin{proof}
 As in the previous proof, this one follows the usual ring theory proof so we only mention how condition ii) in our definition of a domain enters the picture. Note that since $D$ is a domain, we have $D_{ij} \neq 0$ if $i < j$. Given any non-zero element $d_{ij} \in D_{ij}$, conjugation by $d_{ij}$ gives an isomorphism of $D_{ii}$ with $D_{jj}$.
\end{proof}

We will need the following definition, from \cite[Section 3]{duality}.

\begin{definition} \label{def.B}
Let $D$ be an $I$-indexed algebra.  We let ${\sf Bimod }D-D$ denote the category of $D-D$-bimodules.  Specifically:
\begin{itemize}

\item{}
an object of ${\sf Bimod }D-D$ is a triple
$$
(B=\{B_{ij}\}_{i,j \in I}, \{\mu_{ijk}\}_{i,j,k \in I}, \{\psi_{ijk}\}_{i,j,k \in I})
$$
where ${B}_{ij}$ is an abelian group and $\mu_{ijk}:B_{ij} \otimes D_{jk} \rightarrow B_{ik}$ and $\psi_{ijk}: D_{ij} \otimes B_{jk} \rightarrow B_{ik}$ are group homomorphisms making $B$ a $D$-$D$ bimodule.

\item{}
A morphism $\phi:  B \rightarrow C$ between objects in ${\sf Bimod }D-D$ is a  collection $\phi=\{\phi_{ij}\}_{i,j \in I}$ such that $\phi_{ij}:B_{ij} \rightarrow D_{ij}$ is a group homomorphism, and such that $\phi$ respects the $D-D$-bimodule structure on $B$ and $C$.

\end{itemize}

\end{definition}
As usual, these can be studied via an appropriate definition of an enveloping algebra, as follows. Let $D_I$ be an $I$-indexed $k$-algebra and $D_H$ be an $H$-indexed $k$-algebra. We define an $I \times H$-indexed $k$-algebra $D_I \otimes_k D_H$ by
$$(D_I \otimes_k D_H)_{(i,h)(i',h')} := D_{I,ii'} \otimes_k D_{H,hh'} .$$
If $D$ is a $\mathbb{Z}$-indexed $k$-algebra, then the {\em enveloping algebra} $D^{op} \otimes_k D$ is a $\mathbb{Z}^2$-indexed algebra and $D-D$-bimodules are just right $D^{op} \otimes_k D$-modules.

For each $h \in I$, there is a natural {\em restriction functor} $\operatorname{res}_h:{\sf Bimod}\, D-D \ra {\sf Gr}\, D$ defined by $B \mapsto e_h B$ and similarly for left modules. There is an exact left adjoint $De_h \otimes_k -$ which sends the $D$-module $M$ to the bimodule defined by $(D e_h \otimes_k M)_{ij} = D_{ih} \otimes_k M_j$. In particular, we see that if $I$ is an injective $D-D$-bimodule, then $Ie_h, e_hI$ are injective left and right $D$-modules.

Given a right $D$-module $M$ and $D-D$-bimodule $B$, we can form the tensor product $M \otimes_D B$ (see \cite[Defintion 3.2]{duality}) which is again a right $D$-module, and this is functorial in $M$ and $B$ (a similar construction holds for left modules). Assume now that  $D$ has a ring of fractions $Q$, which we can also view as a $D-D$-bimodule. The natural inclusion map $D \ra Q$ induces a localization map $\lambda_M: M \ra M \otimes_D Q$. If $M$ is a noetherian right $D$-module, we say $M$ is {\it torsion} if $M \otimes_D Q = 0$ and {\it torsion-free} if $\lambda_M$ is injective. Standard arguments using exactness of localization give the following result.

\begin{proposition}  \label{prop.torsionmodules}
\begin{enumerate}
 \item $\sigma M:= \ker \lambda_M$ is torsion and contains every torsion submodule of $M$. It is called the {\em torsion submodule} of $M$.
 \item $M/ \sigma M$ is torsion-free.
\end{enumerate}
\end{proposition}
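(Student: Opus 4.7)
The plan is to exploit exactness of the localization functor $-\otimes_D Q$, equivalently flatness of $Q$ as a left $D$-module, which is built into the right Ore calculus provided by Proposition~\ref{prop.ringquotients}. Concretely, any inclusion $\iota\colon N \hookrightarrow M$ of right $D$-modules induces an injection $\iota \otimes_D Q\colon N \otimes_D Q \hookrightarrow M \otimes_D Q$ compatible with the localization maps, in the sense that $(\iota\otimes Q)\circ \lambda_N = \lambda_M\circ \iota$.

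For part (1), I first show that $\sigma M$ is torsion. Under the injection $\sigma M \otimes_D Q \hookrightarrow M \otimes_D Q$, a pure tensor $m \otimes q$ maps to $(m \otimes 1)\cdot q = \lambda_M(m)\cdot q = 0$, since $m \in \ker \lambda_M$. Hence $m \otimes q = 0$ in $\sigma M \otimes_D Q$, and as pure tensors generate, $\sigma M \otimes_D Q = 0$. Next, if $N \subseteq M$ is any torsion submodule, then $N \otimes_D Q = 0$, so the composite $N \xrightarrow{\lambda_N} N \otimes_D Q \hookrightarrow M \otimes_D Q$ is zero; by functoriality of $-\otimes_D Q$ this composite equals $\lambda_M$ restricted to $N$, forcing $N \subseteq \ker \lambda_M = \sigma M$.

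For part (2), apply $-\otimes_D Q$ to the short exact sequence $0 \to \sigma M \to M \to M/\sigma M \to 0$. Using $\sigma M \otimes_D Q = 0$ we obtain an isomorphism $\bar\lambda \colon M \otimes_D Q \xrightarrow{\sim} (M/\sigma M) \otimes_D Q$ sending $m \otimes 1$ to $\bar m \otimes 1$. If $\bar m \in \ker \lambda_{M/\sigma M}$, then $\bar m \otimes 1 = 0$ in $(M/\sigma M) \otimes_D Q$, so applying $\bar\lambda^{-1}$ gives $\lambda_M(m) = 0$, whence $m \in \sigma M$ and $\bar m = 0$. Thus $\lambda_{M/\sigma M}$ is injective and $M/\sigma M$ is torsion-free.

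The only point requiring real attention is that the formal properties of Ore localization---flatness of the module of fractions and the compatibility of $\lambda$ with inclusions---transfer verbatim to the $\mathbb{Z}$-indexed setting; this is implicit in the development of Section~\ref{sec.torsion}, and is precisely why the authors advertise the argument as standard.
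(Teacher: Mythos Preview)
Your argument is correct and is precisely the ``standard argument using exactness of localization'' to which the paper alludes in lieu of an explicit proof; there is no substantive difference in approach.
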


We need one more standard localization result in the $\mathbb{Z}$-indexed setting.

\begin{prop}   \label{prop.structuretorfree}
Let $M$ be a finitely generated torsion-free right $D$-module. Then there is an embedding $\phi: M \ra (e_i D)^{\oplus n}$ for some $i,n$ such that $\operatorname{coker} \phi$ is torsion.
\end{prop}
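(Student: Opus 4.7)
My approach is to embed $M$ first into $M \otimes_D Q \cong (e_0 Q)^{\oplus n}$ and then clear denominators using the left Ore condition to land inside $(e_K D)^{\oplus n}$ for some $K$. Torsion-freeness of $M$ gives an injection $\lambda_M: M \hookrightarrow M \otimes_D Q$. Pick a finite generating set $m_1, \dots, m_k$ of $M$ with $m_l \in M_{j_l}$; then the degree-zero component $(M \otimes_D Q)_0$ is spanned as a right $Q_{00}$-module by the subspaces $m_l \otimes Q_{j_l,0}$, each of which is one-dimensional over the division ring $Q_{00}$ by Proposition~\ref{prop.structureQ}(1). Hence $(M \otimes_D Q)_0 \cong Q_{00}^{\oplus n}$ for some $n \leq k$, and Proposition~\ref{prop.structureQ}(2) produces an isomorphism $\psi : M \otimes_D Q \to (e_0 Q)^{\oplus n}$. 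Set $f := \psi \circ \lambda_M : M \hookrightarrow (e_0 Q)^{\oplus n}$.

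Denominator-clearing then proceeds in two stages, both invoking the left Ore condition on $D$ (which is available since $D$ has a two-sided ring of fractions). First, for each generator $m_l$, the $n$ coordinates of $f(m_l) \in Q_{0,j_l}^{\oplus n}$ share a common left denominator $s_l \in D_{k_l,0} \setminus \{0\}$, giving $f(m_l) = s_l^{-1} u_l$ with $u_l \in D_{k_l,j_l}^{\oplus n}$. Second, iterating the left Ore condition on the finite family $s_1, \dots, s_k$ yields a single nonzero $s \in D_{K,0}$ together with elements $t_l \in D_{K,k_l}$ satisfying $s = t_l s_l$ for every $l$. Writing $v_l := t_l u_l \in D_{K,j_l}^{\oplus n}$, we obtain the uniform expression $f(m_l) = s^{-1} v_l$.

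I then define $\phi : M \to (e_K Q)^{\oplus n}$ by $\phi(m) := s \cdot f(m)$; left multiplication by $s$ is a right $D$-module isomorphism $(e_0 Q)^{\oplus n} \to (e_K Q)^{\oplus n}$ since $s$ is invertible in $Q$. Injectivity of $\phi$ is immediate, and $\phi(m_l) = v_l \in (e_K D)^{\oplus n}$ forces $\phi(M) \subseteq (e_K D)^{\oplus n}$. To see that $\operatorname{coker} \phi$ is torsion, apply the exact functor $- \otimes_D Q$ to the short exact sequence $0 \to M \to (e_K D)^{\oplus n} \to \operatorname{coker} \phi \to 0$: the induced map $\phi \otimes_D Q$ factors as $\psi$ followed by left multiplication by $s$, both isomorphisms, so $(\operatorname{coker} \phi) \otimes_D Q = 0$ as required.

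The principal obstacle is the second clearing stage: one must exhibit a \emph{single} element $s$ serving as a common left denominator for every generator at once. Naive right multiplication of each $m_l$ by its own $s_l$ would land in $D$, but sends different generators into different graded pieces and so does not extend to a $D$-module map on $M$. The iterated application of the two-sided Ore hypothesis is precisely what circumvents this difficulty and supplies the desired scalar $s$.
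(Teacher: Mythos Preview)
Your proof is correct and follows essentially the same strategy as the paper: embed $M$ into $(e_0 Q)^{\oplus n}$ via torsion-freeness and the structure of $Q$-modules, then use left uniformity of $De_0$ (equivalently the left Ore condition) to find a single common left denominator $s \in D_{K,0}$ for the finitely many generators, so that left multiplication by $s$ carries $M$ into $(e_K D)^{\oplus n}$, with torsion cokernel because $\phi \otimes_D Q$ is an isomorphism. The only cosmetic difference is that you split the denominator-clearing into two explicit stages (within each generator, then across generators) and spell out why $(M\otimes_D Q)_0$ is finite-dimensional over $Q_{00}$, whereas the paper compresses these into the observation that any two denominators in $D_{*,0}$ have a common left multiple in the uniform left module $De_0$.
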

\begin{proof}
 Note first that $M \otimes_D Q$ is finitely generated as a $Q$-module so by the structure theory of Proposition~\ref{prop.structureQ} it is isomorphic to $(e_0Q)^n$ for some integer $n$. We may thus assume that $M$ is a submodule of $(e_0Q)^n$ and hence write elements of $M$ as $n$-tuples of left fractions. We wish to find a common left denominator $a_{i0} \in D_{i0}$ for all the elements in $M$, for then $a_{i0} M \subseteq (e_i D)^n$ and since $D$ is a domain, left multiplication by $a_{i0}$ gives the desired embedding $\phi$.

Now $M$ is finitely generated, so we need only show that any two elements of $e_0 Q$ have a common denominator, or equivalently, any two such denominators $x_{r0} \in D_{r0}, y_{s0} \in D_{s0}$ have a common left multiple. $D e_r x_{r0} \cap D e_s y_{s0}$ is a non-zero submodule of the uniform module $D e_0$ so we are done. Note that $\phi \otimes_D Q$ is an isomorphism so $\operatorname{coker} \phi$ is indeed torsion.
\end{proof}

\section{Two-sided vector spaces and non-commutative symmetric algebras} \label{section.twosided}

In this section, we recall basic definitions and facts regarding two-sided vector spaces and non-commutative symmetric algebras. We then establish the Euler exact sequence in the not necessarily algebraic case.

\subsection{Two-sided vector spaces}   \label{subsec.2sided}

By a \emph{two-sided vector
space} we mean a $K_{0}-K_{1}$-bimodule $V$. If $V \neq 0$, then the characteristic field of both $K_0$ and $K_1$ are the same and acts centrally on $V$. In general, we use the symbol $k$ to denote any common subfield of $K_0,K_1$ which acts centrally on $V$. If a central subfield $k$ can be chosen so that $K_{i}/k$ is a finite separable extension, then following Ringel \cite{species}, we say that $V$ is an {\em algebraic bimodule}.  We say $V$ has {\it rank $n$} if $\operatorname{dim}_{K_{0}}(V)
= \operatorname{dim}_{K_{1}}(V)=n$.

In order to describe the next result classifying rank two two-sided vector spaces, we introduce some notation.  If $M_{n}(K_{0})$ denotes the ring of $n\times n$ matrices over $K_{0}$ and $\phi:K_{1}\rightarrow M_n(K_{0})$ is a nonzero homomorphism, then we denote by $K^n_\phi$ the two-sided vector space whose underlying set is $K_{0}^{n}$, whose left action is the usual one, and whose right action is via $\phi$.

We have the following variant of \cite[Theorem 1.3]{P}
\begin{lemma} \label{lemma.twosidedclass}
Suppose $V$ has rank two.
\begin{enumerate}
\item{} If $V$ is non-simple, then $K := K_{0} \cong K_{1}$.  Considering $V$ as a $K-K$-bimodule, we have
\begin{enumerate}
\item{} $V \cong K^{2}_{\phi}$ where $\phi(a)=\begin{pmatrix}
\sigma(a) & 0 \\ 0 & \tau(a) \end{pmatrix}$ and $\sigma, \tau \in
\operatorname{Aut}(K)$, or

\item{} $V \cong K^{2}_{\phi}$ where $\phi(a)=\begin{pmatrix}
\sigma(a) & \delta(a) \\ 0 & \sigma(a) \end{pmatrix}$, $\sigma(a)
\in \operatorname{Aut}(K)$, and $\delta$ is a $(\sigma, \sigma)$-derivation.
\end{enumerate}

\item{} If $V$ is simple, then there exists a degree two field extension $F$ of $K_{0}$ and $K_{1}$ such that $V$ is isomorphic to the two-sided vector space ${}_{0}F_{1}$ with underlying additive group $F$ and left and right actions induced by the field embeddings of $\iota_0:K_{0}\ra F$ and $\iota_1:K_{1}\ra F$, respectively.
\end{enumerate}
\end{lemma}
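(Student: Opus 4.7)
The plan is to treat the non-simple and simple cases separately.

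For part (1), I would start by exploiting a proper nonzero subbimodule $W \subset V$, guaranteed by non-simplicity. Since $W$ is simultaneously a left $K_0$-subspace and right $K_1$-subspace of $V$ of rank $\le 2$, properness forces $W$ to have rank $1$ on both sides. Picking $0 \neq w \in W$, the identity $wa = \sigma(a) w$ defines a ring map $\sigma : K_1 \to K_0$ which is necessarily a field isomorphism, and we identify $K_0 = K_1 =: K$. Lifting a generator of $V/W$ to $v \in V$, the pair $\{w, v\}$ is a basis on both sides, and the right action is encoded by $va = \tau(a) v + \delta(a) w$; associativity $(va)b = v(ab)$ forces $\tau$ to be a field automorphism of $K$ and $\delta$ to satisfy the skew derivation identity $\delta(ab) = \tau(a)\delta(b) + \delta(a)\sigma(b)$.

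It remains to handle the $\sigma \neq \tau$ sub-case. Since $K$ is commutative, $\delta(ab) = \delta(ba)$ combined with the skew derivation identity yields $\delta(a)(\tau(b) - \sigma(b)) = \delta(b)(\tau(a) - \sigma(a))$; fixing $a_0$ with $\tau(a_0) \neq \sigma(a_0)$ then forces $\delta(b) = c(\tau(b) - \sigma(b))$ for a constant $c \in K$, and replacing $v$ by $v + cw$ absorbs $\delta$, producing the diagonal form (a). When $\sigma = \tau$, the skew derivation identity is precisely the $(\sigma, \sigma)$-derivation identity, giving form (b).

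For part (2), the right $K_1$-action, which commutes with left $K_0$-multiplication, yields an injective ring homomorphism $\rho : K_1 \hookrightarrow \operatorname{End}_{K_0}(V) \cong M_2(K_0)$, and simplicity of $V$ means $\rho(K_1)$ acts irreducibly on $K_0^2$. The main obstacle is producing an element $x \in K_1$ for which $\rho(x)$ has an irreducible quadratic minimal polynomial over $K_0$: if every $\rho(x)$ were scalar, every $K_0$-line would be a subbimodule, and if some $\rho(x)$ had a $K_0$-rational eigenvalue, the associated eigenspace (or kernel of its nilpotent part) would be a $\rho(x)$-invariant $K_0$-line, hence $\rho(K_1)$-invariant by commutativity of $K_1$. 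Once such $x$ is produced, $F := K_0[\rho(x)] \subset M_2(K_0)$ is a degree-$2$ field extension of $K_0$, and because $K_1$ is commutative, $\rho(K_1)$ lies in the centralizer of $\rho(x)$, which coincides with $F$ since $\rho(x)$ is non-derogatory. Viewing $V$ as a left $F$-module of $F$-dimension $1$ yields $V \cong F$, with the bimodule structure realized by the inclusion $\iota_0 : K_0 \hookrightarrow F$ and the embedding $\iota_1 := \rho : K_1 \hookrightarrow F$. Finally, $[F : K_1] = \dim_{K_1} V = 2$ follows from $V \cong F$ as right $K_1$-modules.
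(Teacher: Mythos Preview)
Your proof is correct but takes a different route from the paper's in both parts.

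For part (1), the paper argues identically up to the isomorphism $K_0 \cong K_1$ via a one-dimensional subbimodule, but then simply invokes \cite[Theorem 1.3]{P} for the dichotomy (a)/(b). You instead supply a direct argument: choosing the basis $\{w,v\}$ adapted to the flag $0 \subset W \subset V$, reading off the $(\sigma,\tau,\delta)$ data from associativity, and then using the commutativity trick $\delta(ab)=\delta(ba)$ to show that $\delta$ is inner (hence removable by $v \mapsto v + cw$) whenever $\sigma \neq \tau$. This makes the lemma self-contained.

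For part (2) the approaches genuinely diverge. The paper observes that a $K_0$-$K_1$-bimodule is the same thing as a module over the commutative ring $K_0 \otimes_k K_1$ (with $k$ the common prime subfield), so a simple bimodule is $(K_0 \otimes_k K_1)/J$ for some maximal ideal $J$; setting $F$ equal to this quotient field yields the two embeddings $\iota_0,\iota_1$ for free. Your argument is more concrete and matrix-theoretic: you realize the right action as $\rho:K_1 \hookrightarrow M_2(K_0)$, use irreducibility to produce a non-derogatory $\rho(x)$, set $F = K_0[\rho(x)]$, and then invoke the centralizer description of a non-derogatory matrix to force $\rho(K_1) \subseteq F$. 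The paper's proof is shorter and more conceptual; yours has the advantage of constructing $F$ explicitly inside $\End_{K_0}(V)$ and of avoiding the possibly infinite-dimensional tensor product $K_0 \otimes_k K_1$ altogether.
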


\begin{proof}
If $V$ is non-simple, then $V$ has a submodule $W$ which is one-dimensional on the right and left.  It follows that $K_{0} \cong K_{1}$, whence the first assertion.  The second assertion follows from \cite[Theorem 1.3]{P}.

Let $k$ be the common characteristic field of $K_0$ and $K_1$. If $V$ is a simple $K_{0} \otimes_{k} K_{1}$-module, there exists a maximal ideal $J$ of $K_{0} \otimes_{k} K_{1}$ such that $V \cong K_{0} \otimes_{k} K_{1}/J := F$.  The result follows.
\end{proof}
\noindent
Several of our results will be proven on a case by case basis depending on the structure of $V$ according to Lemma \ref{lemma.twosidedclass}.

We also need to recall from \cite{vandenbergh}, the notion of left and right dual of a two-sided vector space.  The {\it right dual of $V$}, denoted $V^{*}$, is the $K_1-K_0$-bimodule
$\operatorname{Hom}_{K_{1}}(V_{K_{1}},K_{1})$ with action $(a \cdot \psi \cdot
b)(v)=a\psi(bv)$ for all $\psi \in
\operatorname{Hom}_{K_{1}}(V_{K_{1}},K_{1})$ and $a \in K_{1}, b \in K_0$. Similarly, the {\it left dual of $V$}, denoted ${}^{*}V$, is the $K_0-K_1$-bimodule
$\operatorname{Hom}_{K_{0}}({}_{K_{0}}V,K_{0})$ with action $(a \cdot \phi
\cdot b)(v)=b \phi(va)$ for all $\phi \in
\operatorname{Hom}_{K_{0}}({}_{K_{0}}V,K_{0})$ and $a \in K_1, b \in K_{0}$.  The left and right duals of two-sided vector spaces are clearly functorial.

In case $V$ is simple of rank two, these duals are easily computed. To this end, we fix some useful notation. Let $\iota_0:K_0 \ra F, \iota_1: K_1 \ra F$ be field homomorphisms such that $[F:\iota_j(K_j)] = 2$ for $j=0,1$. We let $\sigma_j$ be the Galois involution generating the Galois group of $F/\iota_j(K_j)$.
Note that the maps $\iota_0,\iota_1$ make $F$ both a $K_0-K_1$-bimodule and a $K_1-K_0$-bimodule. We will use the symbols $_0F_1$ and $_1F_0$ to denote these bimodules.  There is a (reduced) trace map
$$ \text{tr}_j: F \ra K_j: a \mapsto \iota_j^{-1}\left( \frac{1}{2}( a + \sigma_j(a)) \right) .$$
The proof of the following lemma is immediate.
\begin{lemma}  \label{lemma.dualsimple}
With the above notation, the trace map gives a non-degenerate trace pairing and hence an isomorphism of $K_1-K_0$-bimodules $_0F_1^* := \Hom_{K_1}(_0F_1,K_1) \cong\ _1F_0$. Explicitly, this is
$$ _1F_0 \xrightarrow{\sim} \ _0F_1^*: a \mapsto [b \mapsto \text{tr}_1(ba)] .$$
We similarly also have a bimodule isomorphism
$$ _1F_0 \xrightarrow{\sim} \ _0^*F_1: a \mapsto [b \mapsto \text{tr}_0(ab)] .$$
\end{lemma}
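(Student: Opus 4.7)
\begin{pf}
The plan is to verify in turn that the stated assignment $a \mapsto [b \mapsto \mathrm{tr}_1(ba)]$ is well-defined, bimodule-linear, injective, and a bijection by a dimension count; the second isomorphism will follow by an identical argument with the roles of $\iota_0,\iota_1$ and $\sigma_0,\sigma_1$ swapped.

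First I would fix $a \in F$ and check that $\phi_a : {}_0F_1 \to K_1$ defined by $\phi_a(b) = \mathrm{tr}_1(ba)$ is right $K_1$-linear. For $c \in K_1$ the right action gives $b \cdot c = b \iota_1(c)$, so using commutativity of $F$ and the fact that $\sigma_1$ fixes $\iota_1(K_1)$, one rewrites
$$\tfrac{1}{2}\bigl(b\iota_1(c)a + \sigma_1(b\iota_1(c)a)\bigr) = \iota_1(c) \cdot \tfrac{1}{2}\bigl(ba + \sigma_1(ba)\bigr),$$
and applying $\iota_1^{-1}$ gives $\phi_a(b\iota_1(c)) = c\,\phi_a(b)$. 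Hence $\phi_a \in \Hom_{K_1}({}_0F_1, K_1)$.

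Next I would check that the global map $\phi:{}_1F_0 \to {}_0F_1^*$, $a \mapsto \phi_a$, is a bimodule map. For $c \in K_1$, $d \in K_0$ and $v \in {}_0F_1$, the action defined before the lemma gives $(c \cdot \phi_a \cdot d)(v) = c\,\phi_a(\iota_0(d) v)$, and on the source side $c \cdot a \cdot d = \iota_1(c)\, a\, \iota_0(d)$. Commutativity of $F$ together with the same trick used in step one reduces the required equality $(c \cdot \phi_a \cdot d)(v) = \phi_{\iota_1(c)a\iota_0(d)}(v)$ to the identity already verified.

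Finally, for non-degeneracy: if $a \neq 0$ then $b = a^{-1} \in F$ yields $\phi_a(b) = \mathrm{tr}_1(1) = 1 \neq 0$ since $\mathrm{char}\, K_1 \neq 2$, so $\phi$ is injective. Both ${}_1F_0$ and ${}_0F_1^* = \Hom_{K_1}({}_0F_1,K_1)$ are of left $K_1$-dimension $2$ (using $\iota_1$), so an injective left $K_1$-linear map between them is an isomorphism. The second isomorphism ${}_1F_0 \xrightarrow{\sim}\ {}_0^*F_1$ is proved by the symmetric argument: $\phi_a(b) = \mathrm{tr}_0(ab)$ is checked to be left $K_0$-linear using that $\sigma_0$ fixes $\iota_0(K_0)$, bimodule-linearity follows again from commutativity of $F$, and non-degeneracy from $\mathrm{tr}_0(1) = 1$. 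The only mildly subtle point is keeping the left/right bimodule conventions straight, but once that bookkeeping is done every step is immediate.
\end{pf}
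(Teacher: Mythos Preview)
Your proof is correct; the paper simply declares the lemma immediate and gives no argument, so your detailed verification of well-definedness, bimodule linearity, and non-degeneracy via $\mathrm{tr}_j(1)=1$ is exactly the routine check the authors omit.
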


\subsection{Non-commutative symmetric algebras}  \label{subsec.SncV}

It will be useful to introduce the following notation.

\begin{notn}  \label{not.Ki}
For $i \in \mathbb{Z}$, we let $K_i = K_0$ if $i$ is even and $K_i = K_1$ if $i$ is odd. We will often drop the subscript if it is clear from the context. Furthermore, when $V$ is non-simple so $K_0 \cong K_1$ by Lemma~\ref{lemma.twosidedclass}, we fix such an isomorphism and so identify $K_0 = K_1 = K$.
\end{notn}

Given a two-sided vector space $V$, we set
$$
V^{i*}:=
\begin{cases}
V & \text{if $i=0$}, \\
(V^{(i-1)*})^{*} & \text{ if $i>0$}, \\
{}^{*}(V^{(i+1)*}) & \text{ if $i<0$}.
\end{cases}
$$
This is a $K_i-K_{i+1}$-bimodule, which given our convention above, we will sometimes refer to as just a $K-K$-bimodule.


\begin{lemma} \label{lemma.ranktwo}
If $V$ has rank two, then $V^{i*}$ has rank two for all $i$ and there is an isomorphism $V \rightarrow V^{**}$.
\end{lemma}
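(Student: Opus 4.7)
The plan is to argue by case analysis using the classification of rank-two bimodules in Lemma~\ref{lemma.twosidedclass}, treating the simple and non-simple cases separately. Once $V^{*}$ is shown to be rank two, iteration gives the claim for $V^{i*}$ with $i > 0$, and the entirely analogous computation for the left dual ${}^{*}V$ handles $i < 0$.

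In the simple case (Lemma~\ref{lemma.twosidedclass}(2)), $V \cong {}_{0}F_{1}$ and Lemma~\ref{lemma.dualsimple} directly gives $V^{*} \cong {}_{1}F_{0}$ via the trace pairing $\mathrm{tr}_{1}$. Since $[F:\iota_{j}(K_{j})] = 2$ for $j = 0, 1$, this bimodule is rank two. A second application of the lemma, this time using $\mathrm{tr}_{0}$ with the roles of $0$ and $1$ interchanged, yields $V^{**} = ({}_{1}F_{0})^{*} \cong {}_{0}F_{1} = V$.

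For the non-simple cases, $K_{0}$ and $K_{1}$ are identified as $K$, and $V$ is either a direct sum $K^{1}_{\sigma} \oplus K^{1}_{\tau}$ of rank-one bimodules (Case 1(a)) or sits in a non-split extension $0 \to K^{1}_{\sigma} \to V \to K^{1}_{\sigma} \to 0$ (Case 1(b)). A direct computation on rank-one bimodules shows $(K^{1}_{\sigma})^{*} \cong K^{1}_{\sigma^{-1}}$. Since $V$ is free as a right $K$-module of rank two, $\Hom_{K}(-,K)$ is exact on this composition series, producing a rank-two $V^{*}$ of the same form but with $\sigma$ replaced by $\sigma^{-1}$ (and $\tau$ by $\tau^{-1}$, or $\delta$ by $-\sigma^{-1}\delta\sigma^{-1}$ after a basis swap in Case 1(b)). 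Iterating once more returns the original data, producing an isomorphism $V^{**} \cong V$.

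The main obstacle is the explicit bookkeeping in Case 1(b): unlike in Case 1(a), where the dual of a direct sum is trivially computed, here one must carefully determine the right $K$-action on a dual basis of $V^{*}$, identify the result as another Case 1(b) bimodule (after a basis swap to restore the upper-triangular form), and verify that the second dualization recovers $(\sigma, \delta)$ exactly rather than merely a conjugate. Naive evaluation $v \mapsto [\psi \mapsto \psi(v)]$ is not even type-correct here since $\psi(v) \in K_{1}$ while $V^{**}$ consists of maps to $K_{0}$, so the biduality isomorphism must be extracted from these case-by-case matrix presentations rather than from a canonical formula.
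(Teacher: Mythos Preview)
Your proposal is correct and follows essentially the same route as the paper: a case split via Lemma~\ref{lemma.twosidedclass}, handling the simple case with the trace-pairing isomorphisms of Lemma~\ref{lemma.dualsimple} and the non-simple case by explicit computation of the dual in the matrix presentation. The only difference is that the paper outsources the non-simple computation to Ringel's formulas in \cite[Section~6.4]{species}, whereas you sketch that computation yourself; the content is the same.
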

\begin{proof}
If $V$ is non-simple, the assertions follow from the formula for left and right dual appearing in \cite[Section 6.4]{species}.

If $V$ is simple, this follows from Lemma \ref{lemma.twosidedclass} and Lemma \ref{lemma.dualsimple}.
\end{proof}

There is a natural isomorphism $ \operatorname{End}_{K_{i+1}}(V^{i*})   \cong  V^{i*} \otimes  V^{(i+1)*}$.  Furthermore, since $V^{i*}$ is a $K-K$-bimodule, there is a ring homomorphism $K_i \ra \operatorname{End}_{K_{i+1}}(V^{i*})$. Combining these gives a $K-K$-bimodule morphism $K_i \rightarrow V^{i*} \otimes  V^{(i+1)*}$. We sometimes denote the image of this map by $Q_{i}$.

As in the graded case, to define the symmetric algebra, we first define the {\em tensor algebra}. This is the $\mathbb{Z}$-indexed algebra  $T = \oplus T_{ij}$ with
\begin{itemize}
\item{} $T_{ij}=0$ if $i>j$,

\item{} $T_{ii}=K_i$,

\item{} and freely generated by $T_{i i+1}=V^{i*}$,
\end{itemize}
In other words,
$$
T_{ij} = T_{ii+1} \otimes T_{i+1 i+2} \otimes \cdots \otimes T_{j-1j}.
$$
We recall (from \cite{vandenbergh}) that the {\em non-commutative symmetric algebra} of $V$, is the quotient $\mathbb{Z}$-indexed algebra $\mathbb{S}^{nc}(V)=T_{ij}/R_{ij}$ where $R_{ij}$ is the ideal generated by $Q_{i}, \ldots, Q_{j-2}$ if $j \geq i+2$ or is zero otherwise. In the sequel, the ring $\mathbb{S}^{nc}(V)$ will be denoted by $A$.

In the sequel, we will assume $V$ has rank two.  The study of $\mathbb{S}^{nc}(V)$ in case $V$ is an algebraic bimodule with $\operatorname{dim}_{K_{0}}(V)=1$ and
$\operatorname{dim}_{K_{1}}(V)=4$ is carried out in \cite{tsen} and \cite{witt}.

We will routinely identify $A$ with the $\mathbb{Z}$-indexed algebra generated by $V$ and $V^{*}$ in bidegrees $(i,i+1)$ with $i$ even and odd respectively, and with relations induced by $Q_{0}$ and $Q_{1}$.  This follows from a routine computation using Lemma \ref{lemma.ranktwo}, \cite[Lemma 2.4]{nyman} and \cite[Lemma 2.1]{tsen}.

\subsection{The Euler exact sequence} \label{section.ees}
Below we use the following notation, which is valid in light of Lemma \ref{lemma.ranktwo}. We pick a two-sided basis $\{x,y\}$ for $V^{j*}$ and let $\{x^*,y^*\} \in V^{j+1*}$ be the dual left basis and $\{ ^*x, ^*y\} \in V^{j-1*}$ be the dual right basis. Hence $Q_j$ is generated on the left or right by $x \otimes x^* +  y \otimes y^*$ and similarly for $Q_{j-1}$. As in Section~\ref{subsec.SncV}, we let $T$ denote the tensor $\mathbb{Z}$-indexed algebra on $V$ and $A = \mathbb{S}^{nc}(V) = T/R$.

The next result shows that the elements of $V^{j*}$ are non-zero-divisors.

\begin{lemma}  \label{lemma.Vnzd}
Let $0\neq x \in V^{j*}, a \in T_{ij}$ be such that $a \otimes x \in R_{ij+1}$. Then $a \in R_{ij}$.
\end{lemma}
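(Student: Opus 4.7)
The plan is to induct on $j-i$. For the base case $j=i$, we have $T_{ii}=K_i$ and $R_{ii+1}=0$, so the hypothesis reduces to $ax=0$ in $V^{i*}$; since $V^{i*}$ is a two-dimensional left $K_i$-vector space (hence faithful as a left $K_i$-module) and $x\neq 0$, this forces $a=0$.

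For the inductive step, the key input is the decomposition $R_{ij+1}=R_{ij}\cdot V^{j*}+T_{i,j-1}\cdot Q_{j-1}$, which follows directly from the definition of $R$. First I would complete $x$ to a left $K_j$-basis $\{x,y\}$ of $V^{j*}$ (possible since $V^{j*}$ has rank two), giving the direct-sum decomposition $T_{ij+1}=T_{ij}\otimes x\oplus T_{ij}\otimes y$. Next I fix a two-sided basis $\{u,v\}$ of $V^{(j-1)*}$ with left-dual basis $\{u^*,v^*\}\subset V^{j*}$; writing $u^*=\alpha x+\beta y$ and $v^*=\gamma x+\delta y$ with coefficients in $K_j$ and invertible coefficient matrix, the canonical generator $u\otimes u^*+v\otimes v^*$ of $Q_{j-1}$ takes the form $p\otimes x+q\otimes y$, where $p:=u\alpha+v\gamma$ and $q:=u\beta+v\delta$ lie in $V^{(j-1)*}$. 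Since $Q_{j-1}$ is cyclic as a left $K_{j-1}$-module, every element of $T_{i,j-1}\cdot Q_{j-1}$ has the form $t\cdot(p\otimes x+q\otimes y)=(tp)\otimes x+(tq)\otimes y$ for some $t\in T_{i,j-1}$. Combined with the evident decomposition of $R_{ij}\cdot V^{j*}$ (using right $K_j$-closure of $R_{ij}$), this shows every element of $R_{ij+1}$ can be written as $(a_1+tp)\otimes x+(a_2+tq)\otimes y$ with $a_1,a_2\in R_{ij}$ and $t\in T_{i,j-1}$.

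Matching this presentation against $a\otimes x$ yields $a=a_1+tp$ and $tq=-a_2\in R_{ij}$. A crucial check is that $q\neq 0$: if $q=u\beta+v\delta=0$ then, since $\{u,v\}$ is a right basis of $V^{(j-1)*}$, we must have $\beta=\delta=0$, forcing $u^*,v^*\in K_jx$ and contradicting that $\{u^*,v^*\}$ is a left basis of the two-dimensional space $V^{j*}$. The inductive hypothesis applied to $t\in T_{i,j-1}$ and the nonzero element $q\in V^{(j-1)*}$ then gives $t\in R_{i,j-1}$, whence $tp\in R_{i,j-1}\cdot V^{(j-1)*}\subseteq R_{ij}$, and finally $a=a_1+tp\in R_{ij}$.

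The main technical obstacle I anticipate is the careful verification that every element of $R_{ij+1}$ has the claimed shape: the reduction of an arbitrary sum of generators in $T_{i,j-1}\cdot Q_{j-1}$ to a single term $t\cdot(p\otimes x+q\otimes y)$ rests on the cyclicity of $Q_{j-1}$ as a left $K_{j-1}$-module, and the bimodule bookkeeping requires tracking that $R_{ij}$ is a sub-$K_i$-$K_j$-bimodule of $T_{ij}$.
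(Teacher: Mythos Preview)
Your proof is correct and follows essentially the same route as the paper's: both induct on $j-i$, use the decomposition $R_{i,j+1}=R_{ij}\otimes V^{j*}+T_{i,j-1}\otimes Q_{j-1}$, split along a left $K_j$-basis $\{x,y\}$ of $V^{j*}$, and apply the inductive hypothesis to the $y$-component. The only cosmetic difference is that the paper chooses the basis of $V^{(j-1)*}$ to be the right-dual basis $\{{}^*x,{}^*y\}$ of $\{x,y\}$ from the outset (so your $p,q$ become ${}^*x,{}^*y$ and the check $q\neq 0$ is automatic), whereas you work with an independent two-sided basis $\{u,v\}$ and change coordinates.
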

\begin{proof}
We argue by induction on $j-i$, the case $j-i=1$ being clear. Since $R_{ij+1} = R_{ij}\otimes V^{j*} + T_{ij-1} \otimes Q_{j-1}$, we may find $b \in T_{ij-1}$ such that
$$a \otimes x - b \otimes ( \,^*x \otimes x +  \,^*y \otimes y) =
(a  - b \otimes \,^*x) \otimes x -  b \otimes \,^*y \otimes y
  \in R_{ij} \otimes V^{j*} .$$
In particular, we see that both $b \otimes \,^*y , a - b \otimes \,^*x \in R$. By induction, we see then that $b$ lies in $R$ and hence so does $a$.
\end{proof}

We now prove the existence of the Euler exact sequence.

\begin{lemma} \label{lemma.eulerses}
\begin{enumerate}
\item The following equality holds,
$$ R_{ij+1} \otimes V^{j+1*} \cap T_{ij} \otimes Q_j  =  R_{ij} \otimes Q_j .
$$
\item There is the following exact sequence
$$0 \rightarrow A_{ij} \otimes Q_{j} \rightarrow A_{ij+1} \otimes V^{j+1*} \rightarrow A_{ij+2} \rightarrow 0.$$
In particular, the left and right dimensions of $A_{ij}$ equals $j-i+1$ for $j \geq i$.
\item Furthermore, there is an exact sequence of left $A$-modules
$$ 0 \ra A e_j \ra (A e_{j+1})^{\oplus 2} \ra A e_{j+2} \ra A_{j+2,j+2} \ra 0 .$$
\end{enumerate}
\end{lemma}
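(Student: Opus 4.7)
The plan is to prove (1) first — this is the substantive content — and then derive (2) and (3) formally from (1) together with the defining presentation of $A$.

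For part (1), the inclusion $R_{ij}\otimes Q_j \subseteq R_{ij+1}\otimes V^{j+1*} \cap T_{ij}\otimes Q_j$ is immediate since $Q_j \subset V^{j*}\otimes V^{j+1*}$. For the reverse, I would exploit the fact that $Q_j$ is free of rank one as a left $K_j$-module, generated by $g := x\otimes x^* + y\otimes y^*$; consequently every element of $T_{ij}\otimes_{K_j} Q_j$ has the form $c\otimes g$ for some $c \in T_{ij}$. Expanding $c\otimes g = (c\otimes x)\otimes x^* + (c\otimes y)\otimes y^*$ and using the unique decomposition of $T_{ij+1}\otimes V^{j+1*}$ along the left $K_{j+1}$-basis $\{x^*, y^*\}$, membership in $R_{ij+1}\otimes V^{j+1*}$ would force both $c\otimes x$ and $c\otimes y$ to lie in $R_{ij+1}$. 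Applying Lemma \ref{lemma.Vnzd} to $c\otimes x$ (with $x\neq 0$) then yields $c \in R_{ij}$, so $c\otimes g \in R_{ij}\otimes Q_j$.

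For part (2), the defining presentation of $A$ reads $R_{ij+2} = R_{ij+1}\otimes V^{j+1*} + T_{ij}\otimes Q_j$, which identifies $A_{ij+2}$ as the cokernel of the multiplication map $T_{ij}\otimes Q_j \to A_{ij+1}\otimes V^{j+1*}$. By (1), the kernel of this composition is exactly $R_{ij}\otimes Q_j$, so the induced map $A_{ij}\otimes Q_j \to A_{ij+1}\otimes V^{j+1*}$ is injective. The dimension formula then follows by induction on $j-i$ from the recurrence $\dim A_{ij+2} = 2\dim A_{ij+1} - \dim A_{ij}$, starting from $A_{ii}=K_i$ and $A_{ii+1}=V^{i*}$, and using that $Q_j$ and $V^{j+1*}$ have the stated dimensions on each side. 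For (3), I would assemble the sequences of (2) across all $i$ into a single sequence of left $A$-modules, using the identifications $Ae_j \otimes_{K_j} Q_j \cong Ae_j$ and $Ae_{j+1}\otimes_{K_{j+1}} V^{j+1*} \cong (Ae_{j+1})^{\oplus 2}$ (valid since $Q_j$ and $V^{j+1*}$ are left-free of ranks one and two). In degrees $i \leq j$ this reproduces (2) verbatim; for $i = j+1$ the leftmost term vanishes ($A_{j+1,j}=0$) and what remains is the isomorphism $V^{j+1*} \xrightarrow{\sim} A_{j+1,j+2}$; and for $i = j+2$ both $(Ae_{j+1})^{\oplus 2}$ and $Ae_j$ vanish, leaving $A_{j+2,j+2}$ as the rightmost cokernel.

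The main obstacle will be part (1): despite its appearance as a routine intersection computation, the reverse inclusion really requires the non-zero-divisor input from Lemma \ref{lemma.Vnzd} to cancel the tensor factors $x$ and $y$. Once (1) is secured, (2) is immediate cokernel identification plus a dimension induction, and (3) is degree-wise assembly.
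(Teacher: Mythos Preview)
Your proposal is correct and follows essentially the same strategy as the paper: reduce (2) and (3) to (1), and for (1) write an element of the intersection as $c\otimes(x\otimes x^*+y\otimes y^*)$, then use the left $K_{j+1}$-basis $\{x^*,y^*\}$ to force $c\otimes x,c\otimes y\in R_{ij+1}$ and conclude $c\in R_{ij}$ via Lemma~\ref{lemma.Vnzd}. Your argument for (1) is in fact slightly tidier than the paper's: the paper first decomposes $R_{ij+1}=R_{ij}\otimes V^{j*}+T_{ij-1}\otimes Q_{j-1}$ and tracks the $Q_{j-1}$-piece explicitly (essentially re-running the inductive step of Lemma~\ref{lemma.Vnzd}), whereas you invoke that lemma as a black box and avoid the extra bookkeeping.
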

\begin{proof}
Note that $(2) \implies (3)$ since it shows exactness in degrees $i \leq j$, whilst exactness in degrees $i>j$ is immediately checked from the definition of $A$. Note also that by definition of $A$, the sequence in part (2) is a complex which is exact everywhere except possibly the $A_{ij} \otimes Q_{j}$ term. Exactness there is equivalent to the statement in (1), so we are reduced to proving (1).

We consider an element of the intersection on the left hand side of (1), which can be written as $c \otimes (x \otimes x^* +  y \otimes y^*)$ for some $c \in T_{ij}$. It can also be written in the form
\begin{equation}
c \otimes (x \otimes x^* +  y \otimes y^*) =
(a \otimes x^* + a' \otimes y^*) + (b \otimes x^* + b' \otimes y^*)
\label{eq:euler}
\end{equation}
where $a,a' \in R_{ij} \otimes V^{j*}, b,b' \in T_{ij-1} \otimes Q_{j-1}$. We write
$$ b = b'' \otimes ( ^*x \otimes x +  ^*y \otimes y),
b' = b''' \otimes (^*x \otimes x +  ^*y \otimes y)$$
Substituting back into (\ref{eq:euler}), we find that
$$ R_{ij} \otimes V^{j*} \ni a =
(c- b'' \otimes\, ^*x ) \otimes x - b'' \otimes\, ^*y \otimes y .$$
This shows that $b'' \otimes \,^*y \in R$ so $b'' \in R$ by lemma \ref{lemma.Vnzd}. We also see that $c  - b'' \otimes ^*x \in R$ so $c \in R$ too. It follows that $c \otimes (x \otimes x^* +  y \otimes y^*)   \in R_{ij} \otimes Q_j$ and the lemma is proved.
\end{proof}

\section{Normal elements in $A$}  \label{sec.normal}

In \cite{vandenbergh}, M. Van den Bergh proves that if $X$ and $Y$ are smooth schemes of finite type over a field $k$, the category of graded right modules over the non-commutative symmetric algebra of a $k$-central rank two $\mathcal{O}_{X}-\mathcal{O}_{Y}$-bimodule is noetherian (D. Presotto and Louis de Thanhoffer de V\"{o}lcsey prove a similar result in case the bimodule has left rank four and right rank one \cite{presotto}).  The key notion in Van den Bergh's paper is the point scheme which is obtained by geometric means and unavailable in our setting. Using the point scheme, he constructs a ``projectively commutative'' quotient algebra, namely the twisted homogeneous co-ordinate ring, from which he deduces the ascending chain condition for the non-commutative symmetric algebra. In this section, we show that in our setting this quotient still exists by proving algebraically that the non-commutative symmetrical algebra of a rank two two-sided vector space has a family of normal elements.

Let $D$ be a $\mathbb{Z}$-indexed algebra. A {\it normal family of elements} $g = \{g_i\}$ of degree $\delta$ consists of $g_i \in D_{i,i+\delta}, i \in \mathbb{Z}$ such that $g_i D_{i+\delta,j+\delta} = D_{ij} g_j$ for all $i,j \in \mathbb{Z}$. We let $(g)$ be the ideal generated by the family.

As usual, we let $V$ be a rank two two-sided vector space, $A = \mathbb{S}^{nc}(V)$, and we routinely use the notation in Section~\ref{section.twosided}.
\subsection{Case 1: $V$ is simple} \label{subsection:simplecase}
We retain the notation from Lemma~\ref{lemma.twosidedclass} so $V = \,_0F_1$. Pick $w_j \in F$ so that $w_j^2 \in \iota_j(K_j)$ but $w_{j}$ is not in $\iota_{j}(K_{j})$. Multiplying $w_{j}$ by the appropriate element of $\iota_j(K_j)$, we may also assume that $w_{j}$ is not in either $\iota_{0}(K_{0})$ or $\iota_{1}(K_{1})$.

Since $F/\iota_1(K_1)$ is Galois, $T_{02}(\,_0F_1) := \,_0F_1 \otimes \,_0F_1^* = \,_0F_1 \otimes \, _1F_0 = F \otimes_{K_1} F$ can be considered a $K_1$-algebra and as such it is isomorphic to $\langle \sigma_1 \rangle \times F \cong F \times F$. Furthermore, $\,_0F_1 \otimes \,_0F_1^*$ is also an $F$-bimodule (since $F \otimes_{K_1} F$ is) and as such decomposes as $\,_0F_1 \otimes \,_0F_1^*  = F_{\text{id}} \oplus F_{\sigma_1}$. We examine the component $F_{\text{id}}$ below. Using Lemma~\ref{lemma.dualsimple}, we see that $Q_0 \subset \,_0F_1 \otimes \,_1F_0$ is the $K_0$-central subbimodule generated by the canonical element $h_1 := 1 \otimes 1 + w_1^{-1} \otimes w_1$.

\begin{lemma}\label{lemma.Fcentral}
\begin{enumerate}
\item $FQ_0 = Q_0 F$ so is an $F$-subbimodule of $\,_0F_1 \otimes \,_1F_0$.
\item $FQ_0$ is $F$-central.
\item $\{h_1, g_1:= w_1 h_1 = w_1 \otimes 1 + 1 \otimes w_1\}$ is a simultaneous left and right $K_0$-basis for $F Q_0$.
\end{enumerate}
\end{lemma}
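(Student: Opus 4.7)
The plan is to work throughout with the explicit decomposition ${}_0F_1\otimes {}_1F_0 = F_{\text{id}}\oplus F_{\sigma_1}$ recalled immediately before the lemma, which under the standard $K_1$-algebra isomorphism $F\otimes_{K_1}F \xrightarrow{\sim} F\times F$ takes the form $a\otimes b\mapsto (ab,\, a\sigma_1(b))$. Under this identification the left $F$-action on ${}_0F_1\otimes {}_1F_0$ becomes diagonal, $c\cdot(\alpha,\beta)=(c\alpha, c\beta)$, while the right $F$-action is twisted on the second factor, $(\alpha,\beta)\cdot c=(\alpha c,\, \beta\sigma_1(c))$; thus $F_{\text{id}}$ is the first summand $F\times 0$, on which the left and right $F$-actions coincide.

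The first step is to locate $h_1$ in this picture. Since $w_1^2\in \iota_1(K_1)$ while $w_1\notin \iota_1(K_1)$, we have $\sigma_1(w_1)=-w_1$, and a direct calculation yields $h_1 = 1\otimes 1 + w_1^{-1}\otimes w_1 \mapsto (2,0)$, placing $h_1$ in $F_{\text{id}}$ and showing it is nonzero there. Because $Q_0$ is the $K_0$-central subbimodule generated by $h_1$ (and $K_0$ embeds into $F$ via $\iota_0$), one obtains $FQ_0 = F h_1$ and $Q_0 F = h_1 F$; both are equal to the one-dimensional $F$-subspace $F\times 0 = F_{\text{id}}$, proving part (1). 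Part (2) is then automatic, since $F_{\text{id}}$ has matching left and right $F$-actions by construction.

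For part (3), a similar calculation gives $g_1 = w_1 h_1 \mapsto (2w_1,\, w_1+\sigma_1(w_1)) = (2w_1, 0) \in F_{\text{id}}$. Our initial choice of $w_1$ ensured that $w_1\notin \iota_0(K_0)$, and since $[F:\iota_0(K_0)]=2$ the set $\{1, w_1\}$ is a $K_0$-basis of $F$ on both sides (the left and right $K_0$-actions on $F$ agreeing by centrality of $\iota_0(K_0)$). Transporting through the identification $F_{\text{id}}\cong F$ then shows that $\{h_1, g_1\}$, corresponding to $\{2, 2w_1\}$, is a simultaneous left and right $K_0$-basis of $FQ_0$.

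The one delicate point is keeping straight the twisted right $F$-action on the $F_{\sigma_1}$ component: it is precisely this twist by $\sigma_1$, combined with the identity $\sigma_1(w_1)=-w_1$, that makes the $F_{\sigma_1}$-components of both $h_1$ and $g_1$ vanish, and hence forces $FQ_0$ and $Q_0 F$ to coincide with $F_{\text{id}}$ rather than spilling into the other summand.
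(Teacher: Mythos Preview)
Your proof is correct and takes a somewhat different route from the paper's. The paper argues by direct computation: writing an arbitrary $a\in F$ as $a=a'+a''w_1$ with $a',a''\in\iota_1(K_1)$, it expands $ah_1$ and $h_1a$ and checks they agree, then observes that all three statements follow from the single identity $ah_1=h_1a$. Your argument instead exploits the explicit $K_1$-algebra isomorphism $F\otimes_{K_1}F\cong F\times F$, $a\otimes b\mapsto(ab,a\sigma_1(b))$, to see at once that $h_1\mapsto(2,0)$ lies in the $F$-central summand $F_{\text{id}}$; parts (1)--(3) then drop out structurally rather than computationally.

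The two approaches are close in content---your observation that $h_1\in F_{\text{id}}$ is equivalent to the paper's identity $ah_1=h_1a$---but your version is more conceptual: it explains \emph{why} $h_1$ is central (it is twice the idempotent projecting onto the diagonal component) and makes the $K_0$-basis in part (3) transparent via the identification $F_{\text{id}}\cong F$. The paper's bare-hands computation, on the other hand, avoids having to verify the bimodule structure on $F\times F$ and the form of the isomorphism.
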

\begin{proof}
For $a \in F$, all statements will follows if we can show that $ah_1 = h_1a$. To this end, we write $a = a' + a''w_1$ where $a',a'' \in \iota_1(K_1)$. Then
$$ a ( 1 \otimes 1 + w_1^{-1} \otimes w_1) =
a' \otimes 1 + a''w_1 \otimes 1 + a' w_1^{-1}\otimes w_1 + a'' \otimes w_1 .$$
Similarly,
$$(1 \otimes 1 + w_1^{-1} \otimes w_1) a = 1 \otimes a' + a'' \otimes w_1 + a'w_1^{-1} \otimes w_1 + a''w_1^{-1} \otimes w_1^2 .$$
The lemma follows since $w_1^2 \in \iota_1(K_1)$ commutes through the tensor.
\end{proof}

Define $h_0:= 1 \otimes 1 + w_0^{-1} \otimes w_0, g_0 = w_0 h_0$ which forms a $K_1$-basis for $FQ_1 = Q_1 F$.

\begin{lemma}\label{lemma.modFQ}
Given $a \otimes b \in \ _0F_1 \otimes\, _1F_0$ we have
$$ a \otimes b \equiv  a \sigma_1(b) \otimes 1 \mod FQ_0.$$
\end{lemma}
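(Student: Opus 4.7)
The plan is to reduce the congruence to two elementary cases by using the $\iota_1(K_1)$-basis $\{1, w_1\}$ of $F$. Since the tensor product $\,_0F_1 \otimes \,_1F_0$ is taken over $K_1$ (acting via $\iota_1$), elements of $\iota_1(K_1)$ slide freely across the tensor sign. Thus if $b = b' + b''w_1$ with $b', b'' \in \iota_1(K_1)$, both sides are additive in $b$, and it suffices to treat $b \in \iota_1(K_1)$ and $b = w_1$ separately.

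For $b \in \iota_1(K_1)$: the congruence is in fact an equality modulo nothing, since $a \otimes b = ab \otimes 1$ and $\sigma_1$ fixes $\iota_1(K_1)$ pointwise, so $a\sigma_1(b) \otimes 1 = ab \otimes 1$ as well. For $b = w_1$: the choice $w_1^2 \in \iota_1(K_1)$ with $w_1 \notin \iota_1(K_1)$ forces $\sigma_1(w_1) = -w_1$, so the discrepancy to check is
\[
a \otimes w_1 - a\sigma_1(w_1) \otimes 1 \;=\; a \otimes w_1 + aw_1 \otimes 1 \;=\; a \cdot (w_1 \otimes 1 + 1 \otimes w_1) \;=\; a \cdot g_1.
\]
By Lemma~\ref{lemma.Fcentral}(3), $g_1 \in FQ_0$, and since $FQ_0$ is an $F$-subbimodule of $\,_0F_1 \otimes \,_1F_0$, left multiplication by $a$ keeps us inside $FQ_0$. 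This establishes the claim for $b = w_1$, and combining the two cases by additivity yields the general statement.

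The argument is essentially a direct computation and the only thing that requires care is bookkeeping: one must remember that the tensor is over $\iota_1(K_1)$ (not over $\iota_0(K_0)$ or some common subfield), so that the identity $a \otimes b' = ab' \otimes 1$ is valid precisely for $b' \in \iota_1(K_1)$. There is no real obstacle; the content of the lemma is that the relation $g_1 \in FQ_0$ is exactly what is needed to witness the Galois twist $b \mapsto \sigma_1(b)$ modulo $FQ_0$.
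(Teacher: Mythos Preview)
Your proof is correct and follows essentially the same approach as the paper: both arguments decompose via the $\iota_1(K_1)$-basis $\{1, w_1\}$ and reduce to the single relation $g_1 = w_1 \otimes 1 + 1 \otimes w_1 \in FQ_0$. Your version is marginally cleaner in that you only expand $b$ and invoke the left $F$-module structure of $FQ_0$ (Lemma~\ref{lemma.Fcentral}) to absorb the factor $a$, whereas the paper expands both $a$ and $b$ and carries out the computation term by term; but this is an organizational difference rather than a different method.
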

\begin{proof}
We write $a = a' + a''w_1, b = b' + b''w_1$ with $a',a'',b',b'' \in \iota_1(K_1)$. Then
\begin{eqnarray*}
a \otimes b & = & a \otimes b' + a' \otimes b''w_1 + a''w_1 \otimes b'' w_1 \\
                 & \equiv & ab' \otimes 1 - a'b''w_1 \otimes 1 - a''b''w_1^2 \otimes 1 \mod FQ_0 \\
                  & = & a(b' - b'' w_1) \otimes 1
\end{eqnarray*}
\end{proof}

\begin{proposition}\label{proposition.gnormal}
The images $\overline{g}_0, \overline{g}_1$ of $g_0,g_1$ in $A$ form a normal pair in the sense that given $a \in F$ there exists $b \in F$ such that $\overline{g}_1 a = b \overline{g}_0$.
\end{proposition}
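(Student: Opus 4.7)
I want to show that for each $a \in F$ one can find $b \in F$ with $g_1 \otimes a - b \otimes g_0 \in R_{03} = Q_0 \otimes T_{23} + T_{01} \otimes Q_1$ in $T_{03}$. My approach will be to expand $g_1 \otimes a$ as a triple tensor in $T_{01} \otimes T_{12} \otimes T_{23}$ and reduce by iterating Lemma~\ref{lemma.modFQ} together with its analogue for $T_{13} = {}_1F_0 \otimes_{K_0} {}_0F_1$ (obtained by interchanging the roles of the indices $0$ and $1$, so $\sigma_1, FQ_0$ become $\sigma_0, FQ_1$).

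The first reduction acts on the ``right half'' $T_{13}$. Writing $a = a_1 + a_2 w_0$ with $a_j \in \iota_0(K_0)$ and moving the $a_j$'s through the tensor (which is over $K_0$), a short direct calculation gives
\[
1 \otimes a = \sigma_0(a) \otimes 1 + a_2 g_0, \qquad w_1 \otimes a = w_1\sigma_0(a) \otimes 1 + (w_1 a_2)\, g_0
\]
in $T_{13}$. Substituting into $g_1 \otimes a = w_1 \otimes (1 \otimes a) + 1 \otimes (w_1 \otimes a)$ splits $g_1 \otimes a$ into a ``head'' part ending in $1 \in T_{23}$, namely $w_1 \otimes \sigma_0(a) \otimes 1 + 1 \otimes w_1\sigma_0(a) \otimes 1$, plus a ``tail'' part lying in $T_{01} \otimes F g_0$. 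The tail is already of the form $(\,\cdot\,) \otimes g_0$ and therefore contributes directly to $b$.

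Next, the head part needs to be matched against the remaining contribution $b' \otimes w_0 \otimes 1$ coming from the $b \otimes g_0$ side. Factoring the rightmost $1$ out, this becomes an identity in $T_{02}$, where Lemma~\ref{lemma.modFQ} converts $w_1 \otimes \sigma_0(a) + 1 \otimes w_1\sigma_0(a)$ into canonical form modulo $FQ_0 = K_0 h_1 \oplus K_0 g_1$. The $K_0 h_1$-piece contributes to $Q_0 \otimes T_{23} \subset R_{03}$ and disappears, but the $K_0 g_1$-piece produces an extra $g_1$-``leakage.''

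The main obstacle I foresee is bookkeeping the leakage: each application of Lemma~\ref{lemma.modFQ} reduces only modulo $FQ_i$ rather than $Q_i$, and the surplus $FQ_i/Q_i$ (spanned by $g_{1-i}$) does not lie in the relation ideal $R$. To close the loop I expect to use the $F$-centrality of $FQ_0$ and $FQ_1$ from Lemma~\ref{lemma.Fcentral}, together with the explicit expressions $g_0 = h_0 w_0$ and $g_1 = h_1 w_1$, to repackage the $g_1$-leakage back into a $(\,\cdot\,) \otimes g_0$ contribution. Solving the resulting linear equation over $F$ for $b$ then yields the required element explicitly in terms of $a$, $w_0$, $w_1$, and the Galois involutions $\sigma_0, \sigma_1$, establishing the normal-pair identity.
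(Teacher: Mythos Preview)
Your plan shares its opening move with the paper's proof: reduce $g_1\otimes a$ on the $T_{13}$ side via the $\sigma_0$-version of Lemma~\ref{lemma.modFQ}. Where the two diverge is in what you do with the resulting ``head'' $(g_1\,\sigma_0(a))\otimes 1\in FQ_0\otimes 1$. You propose to decompose it in the $K_0$-basis $\{h_1,g_1\}$ of $FQ_0$, keep the $h_1$-piece in $Q_0\otimes F\subset R_{03}$, and treat the $g_1$-piece as ``leakage'' to be repackaged. The difficulty is that this is genuinely circular: for $a=1$ the tail vanishes and $\sigma_0(a)w_1=w_1$, so the $K_0$-coordinates are $(c_1,c_2)=(0,1)$ and the leakage is \emph{all} of $g_1\otimes 1$. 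Your iteration returns you to the original problem, and the vague ``repackage using $g_1=h_1w_1$'' does not break the loop by itself (writing $c_2g_1=c_2w_1h_1$ and decomposing $c_2w_1$ again in $\{1,w_1\}$ just reproduces the same $g_1$-term). A related slip: the tail lies in $T_{01}\otimes Fg_0$, not in $F\otimes g_0$; since the tensor is over $K_1$ you cannot move $a_2\in\iota_0(K_0)$ across, so the tail also has an $h_0$-component you have not accounted for.

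The paper sidesteps all of this by never splitting $FQ_1$ into $Q_1$ and $K_1 g_0$ separately: it works modulo the whole $F\otimes FQ_1$ and shows directly that the head lands in $Q_0\otimes F$. The missing idea in your plan is to use $F$-centrality to rewrite $g_1\,\sigma_0(a)=h_1\cdot(w_1\sigma_0(a))$ (right $F$-action on $T_{02}$) and then run the $T_{13}$-reduction \emph{backwards}: since $(h_1\cdot c)\otimes 1\equiv h_1\otimes\sigma_0(c)\bmod F\otimes FQ_1$, one gets
\[
(g_1\,\sigma_0(a))\otimes 1\equiv h_1\otimes \sigma_0\bigl(w_1\sigma_0(a)\bigr)=h_1\otimes\sigma_0(w_1)\,a\in Q_0\otimes F.
\]
That single line replaces your leakage bookkeeping entirely. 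If you want an explicit $b$, you can then unpack $F\otimes FQ_1=F\otimes Q_1+F\otimes g_0$ afterwards; but the existence argument is already complete.
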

\begin{proof}
We analyze the equation in the triple tensor product $_0F_1 \otimes \,_1F_0 \otimes\,_0 F_1$. It suffices to show that in this product, $g_1 \otimes a$ lies in
$$ F \otimes g_0 + F \otimes Q_1 + Q_0 \otimes F = F \otimes FQ_1 + Q_0 \otimes F .$$
Using Lemma~\ref{lemma.modFQ} we find
\begin{eqnarray*}
g_1 \otimes a & \equiv & g_1 w_1^{-1}w_1 \sigma_0(a) \otimes 1 \mod F \otimes FQ_1 \\
                     & \equiv & h_1 \otimes \sigma_0^{-1}(w_1) a  \mod F \otimes FQ_1  \\
                     & \in & Q_0 \otimes F
\end{eqnarray*}
\end{proof}
For $i \in \mathbb{Z}$, we will abuse notation and write $g_{i} \in A$ for $\overline{g}_{\bar{i}}$ where $\bar{i}$ is the residue of $i$ modulo 2.

\subsection{Case 2: $V$ non-simple} \label{section.oned}
By Lemma \ref{lemma.twosidedclass}, we know that in this case $K=K_{0} \cong K_{1}$.  Furthermore, we do not change the isomorphism class of $A$ by considering $V$ as a $K-K$-bimodule.  Thus, we may assume without loss of generality that $K_{0}=K_{1}$ and, in the notation of Lemma \ref{lemma.twosidedclass}, that $V = K^{2}_{\phi}$ where $\phi$ is defined in cases (1a) and (1b) of the lemma.  We let $x = (1,0)$, $y=(0,1)$, and we let $x^{*}$ and $y^{*}$ be a dual left basis for $V^{*}$.  Then the bimodules generated by $y$ and $x^{*}$ are invertible bimodules.  Therefore, since $y^{*} y = - x^{*} x$ and $y  y^{*}=-x  x^{*}$ in $A$, the elements
$$
g_{i} = \begin{cases} y \in A_{i i+1} \mbox{ for $i$ even, and} \\ x^{*} \in A_{i i+1} \mbox{ for $i$ odd.} \end{cases}
$$
forms a normal family of degree one.

We summarise our results in the two cases below.
\begin{proposition}  \label{prop.normalfamily}
The family of elements $g = \{g_i\}$ is normal of degree 2 if $V$ is simple and of degree 1 if $V$ is non-simple.
\end{proposition}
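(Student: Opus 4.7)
The strategy is to derive the full normality condition $g_i A_{i+\delta,\,j+\delta} = A_{ij}\,g_j$ from the \emph{one-step} version
\[
g_i A_{i+\delta,\,i+\delta+1} = A_{i,\,i+1}\,g_{i+1}, \qquad i \in \mathbb{Z},
\]
plus the scalar base case $g_i K_{i+\delta} = K_i\,g_i$ (corresponding to $j=i$). Since $A$ is generated in degree one by the bimodules $V^{k*}$, every element of $A_{i+\delta,\,j+\delta}$ decomposes into a product of degree-one generators; an easy induction on $j-i$ that applies the one-step identity to each successive generator will then yield the general case. The proposition thus reduces to verifying these two base assertions in each of the simple and non-simple settings.

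In the simple case ($\delta = 2$), Proposition~\ref{proposition.gnormal} is precisely the one-step identity in the direction $g_i A_{i+2,\,i+3} \subseteq A_{i,\,i+1}\,g_{i+1}$, since for every $a \in F$ it produces a $b \in F$ with $\overline{g}_1 a = b\,\overline{g}_0$. The reverse inclusion will follow by the symmetric argument obtained by swapping the roles of $K_0, w_0$ with $K_1, w_1$, and is in any case forced by a dimension count once one notes that both sides are $K$-subspaces of $A_{i,\,i+3}$ of dimension at most two. The scalar identity $g_i K_{i+2} = K_i\,g_i$ is immediate from Lemma~\ref{lemma.Fcentral}, whose content is precisely that $FQ_{\bar{i}}$ is $F$-central, hence centralized by the subfields $K_0$ and $K_1$.

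In the non-simple case ($\delta = 1$), $g_i$ is either $y$ or $x^*$ according to the parity of $i$, and the sub-bimodules $Ky \subseteq V$ and $Kx^* \subseteq V^*$ are invertible, so the scalar identity holds automatically. For the one-step identity in the even case, I would compute directly inside $A_{i,\,i+2}$: the left side $y\,A_{i+1,\,i+2}$ is $K$-spanned by $yx^*$ and $yy^*$, while the right side $A_{i,\,i+1}\,x^*$ is spanned by $xx^*$ and $yx^*$. The relation $Q_i$, which reads $xx^* + yy^* = 0$ in $A$, identifies $yy^* = -xx^*$, so the two spans coincide. The odd case is identical with the roles of $\{x,y\}$ and $\{x^*,y^*\}$ interchanged.

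The only nontrivial work is concentrated in the simple case, where one must descend into the triple tensor product $\,_0F_1 \otimes {}_1F_0 \otimes {}_0F_1$ and exploit the trace pairing of Lemma~\ref{lemma.dualsimple}; but this computation has already been carried out in Proposition~\ref{proposition.gnormal}, so the remainder of the argument is purely formal bookkeeping as outlined above.
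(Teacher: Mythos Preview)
Your approach is essentially the paper's own: Proposition~\ref{prop.normalfamily} is stated there as a summary of Sections~\ref{subsection:simplecase} and~\ref{section.oned}, with the reduction to generators left implicit, and your non-simple computation via the relation $yy^* = -xx^*$ is exactly what the paper invokes.

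One point deserves care. In the simple case you justify the reverse inclusion $A_{i,i+1}\,g_{i+1} \subseteq g_i\,A_{i+2,i+3}$ in two ways, and neither quite lands as written. Swapping $K_0,w_0$ with $K_1,w_1$ in Proposition~\ref{proposition.gnormal} yields the \emph{same} direction of inclusion at the other parity of $i$, not the reverse: you still get ``given $a$ on the right of $g$, find $b$ on the left''. What you actually need is the left-handed analogue---given $b\in F$, show $b\otimes g_0$ lies in $FQ_0\otimes F + F\otimes Q_1$ inside the triple tensor product---which follows by the same method (use Lemma~\ref{lemma.modFQ} to collapse the first two tensor factors rather than the last two). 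As for the dimension count, knowing both sides have dimension \emph{at most} two is not enough once one is contained in the other; you would need the smaller side to have dimension exactly two, i.e.\ that left multiplication by $g_i$ is injective on $A_{i+2,i+3}$. That is Proposition~\ref{prop:gisnzd}, proved later using the very normality you are establishing, so invoking it here would be circular.

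The paper is equally brief at this step (Proposition~\ref{proposition.gnormal} also states only one direction), so this is a shared omission rather than a defect unique to your argument; just be aware that the ``symmetry'' is a left--right one in the tensor computation, not an interchange of the two fields.
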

We remark that in the non-simple case, $g^2 = \{g_ig_{i+1}\}$ is also a normal family of degree 2. In Van den Bergh's setup, this is the more natural element to consider.

\section{The twisted ring $A/(g)$}  \label{section.twisted}

In this section, we analyze the quotient algebra $A/(g)$ where $g$ is the normal family of elements obtained in the previous section. We show it is the analogue of the twisted homogeneous co-ordinate ring studied in the theory of non-commutative $\mathbb{P}^1$-bundles \cite{vandenbergh} and in particular is noetherian. This in turn allows us to apply the Hilbert basis theorem \cite[Lemma 3.2.2]{vandenbergh} to show that $A$ is noetherian.


We first define the notion of a {\em twisted ring} in the context of $\mathbb{Z}$-indexed algebras. Let $F$ be a field and $\sigma_i \in \operatorname{Aut} F, i \in \mathbb{Z}$. We will sometimes refer to the $\sigma_i$ as {\em twisting automorphisms}. We define the {\em full twisted ring on $F$}  to be the $\mathbb{Z}$-indexed algebra $C = C(F;\sigma)$ with $C_{ij}=F$ and multiplication $m_{ijk}: C_{ij} \otimes_F C_{jk} \ra C_{ik}$ defined by
\begin{equation}  \label{eqn.muC}
 m_{ijk}(a \otimes b ) =
\begin{cases}
a\sigma_{i+1}\sigma_{i+2}\ldots\sigma_{j}(b) & \text{if } i \leq j \\
a\sigma_{i}^{-1}\sigma^{-1}_{i-1}\ldots\sigma_{j+1}^{-1}(b) & \text{if } i > j
\end{cases}
\end{equation}
Now $F$ is noetherian, so this strongly graded ring is also (left and right) noetherian by \cite[Lemma 3.2.3]{vandenbergh}. A {\em twisted ring} (on $F$) is any subring $B$ which equals $C$ in sufficiently large degree, that is, there exists $d \in \mathbb{N}$ such that $B_{ij} = C_{ij}$ whenever $j-i \geq d$.

\cite[Lemma 3.2.1]{vandenbergh} gives the following result.
\begin{proposition}  \label{prop.twistednoeth}
$C_{\geq 0}$ is noetherian.
\end{proposition}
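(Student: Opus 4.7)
The plan is to prove right-noetherianness of $C_{\geq 0}$ (the left case being symmetric) by showing that each principal module $P_i := e_i C_{\geq 0}$ is uniserial with a well-founded submodule lattice. Since any finitely generated graded right $C_{\geq 0}$-module is a quotient of a finite direct sum of the $P_i$'s, noetherianness will then propagate to all finitely generated modules.

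To analyze $P_i$, first observe that $(P_i)_j = C_{ij} = F$ for $j \geq i$ and vanishes otherwise; the right action of $C_{jj} = F$ on $(P_i)_j$ is given by $a \cdot b = a\, \tau_{ij}(b)$ where $\tau_{ij} := \sigma_{i+1} \cdots \sigma_j$ is an automorphism of $F$ (empty composition when $j = i$). Hence each $(P_i)_j$ is a one-dimensional right $F$-module, so the component $N_j$ of any graded right submodule $N \subseteq P_i$ is either $0$ or all of $F$. The key structural observation is then that if $N_j = F$, then for every $k \geq j$ right multiplication by a nonzero element of $C_{jk}$ is an $F$-linear isomorphism, forcing $N_k = F$. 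Therefore the submodules of $P_i$ are totally ordered, parametrized by $n \in \mathbb{Z}_{\geq i} \cup \{\infty\}$ via $N^{(n)} := \bigoplus_{j \geq n} F$; since $n$ is bounded below by $i$, every ascending chain stabilizes and $P_i$ is noetherian.

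The left-module case is handled by the mirror argument applied to $L_j := C_{\geq 0} e_j$, whose submodule lattice is parametrized by $m \in \mathbb{Z}_{\leq j} \cup \{-\infty\}$ and is bounded above by $j$. The only mild subtlety will be verifying that each twist $\tau_{ij}$ is indeed an $F$-algebra automorphism so that the left (respectively right) action on each component remains simple; this is immediate from the definition of $C$. After that, the argument reduces to an elementary lattice observation, and the cited result \cite[Lemma 3.2.1]{vandenbergh} can be viewed as precisely this reasoning.
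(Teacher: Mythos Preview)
Your argument is correct. The paper itself gives no proof of this proposition, merely citing \cite[Lemma~3.2.1]{vandenbergh}; your direct analysis of the submodule lattice of each $e_i C_{\geq 0}$ and $C_{\geq 0} e_j$ is a clean, self-contained unpacking of that citation in this concrete setting. One small remark on framing: for a $\mathbb{Z}$-indexed algebra $D$, the standard meaning of ``right noetherian'' is precisely that every $e_i D$ is a noetherian right $D$-module (the full $D = \bigoplus_i e_i D$ is an infinite direct sum and is never noetherian in the naive sense), so your reduction to the $P_i$ is the definition itself rather than a preliminary step. With that convention in place, your uniseriality argument is already the whole proof.
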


We consider again our usual setting of a rank two two-sided vector space $V$ and the non-commuative symmetric algebra $A = \mathbb{S}^{nc}(V)$. Following Van den Bergh \cite[Section 6.3, Step 9]{vandenbergh}, we let $I \subset A$ be the ideal generated by the $g$. More precisely, for $i \in \mathbb{Z}$, we let $I_{i  i+2} \subset A_{i  i+2}$ denote the $K$-sub-bimodule generated by $g_{i}$.  For $j \geq i+2$, $I_{ij}$ is the image of $A_{i j-2}\otimes I_{j-2 j}$ in $A_{i j}$ under multiplication by Proposition \ref{prop.normalfamily}.

We come to our first major result.

\begin{theorem}  \label{thm.Anoeth}
$A/I$ is a twisted ring on $F$ if $V$ is simple and a twisted ring on $K$, if $V$ is non-simple. Furthermore, $A$ is noetherian.
\end{theorem}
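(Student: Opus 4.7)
The plan is to identify $A/I$ explicitly with a twisted ring, and then deduce noetherianness of $A$ via Hilbert's basis theorem \cite[Lemma 3.2.2]{vandenbergh} together with Proposition~\ref{prop.twistednoeth}. The proof splits along the cases of Lemma~\ref{lemma.twosidedclass}.

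The key step is constructing an explicit surjective $\mathbb{Z}$-indexed algebra homomorphism $\psi\colon A \twoheadrightarrow C$ onto a candidate full twisted ring, and then showing it descends to an isomorphism $A/I \to C$ in sufficiently large degree. In the simple case, I take $C = C(F;\sigma)$ where the $\sigma_i$ alternate between the Galois involutions $\sigma_0,\sigma_1$ of Lemma~\ref{lemma.dualsimple}; the correct alternation is dictated by the $\sigma_1$ appearing in Lemma~\ref{lemma.modFQ}. In the non-simple case, I take $C = C(K;\sigma)$ where the twisting data is read off from the automorphisms $\sigma,\tau$ (in case (1a)) or $\sigma$ alone (in case (1b)) of Lemma~\ref{lemma.twosidedclass}, noting that in case (1b) the derivation $\delta$ vanishes upon collapsing $V^{i*}$ by the surviving generator of $g$, leaving only a pure twist by $\sigma$. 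The map $\psi$ is constructed degree-by-degree by collapsing each $V^{i*}$ onto the appropriate quotient by the generator of $g$; the defining relations $Q_i$ of $A$ map to zero in $C$ by Lemma~\ref{lemma.modFQ} (simple) or direct basis computation using $y^*y=-x^*x,\ yy^*=-xx^*$ (non-simple), so $\psi$ factors through $A/I$.

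To check that the induced map $A/I \to C$ is an isomorphism in sufficiently large degree, I would compare dimensions. By the Euler exact sequence (Lemma~\ref{lemma.eulerses}(2)), $\dim_{K_i} A_{ij} = j-i+1$, while a full twisted ring on $F$ (resp.\ on $K$) has $\dim_{K_i} C_{ij} = \delta$ with $\delta = 2$ (resp.\ $\delta = 1$). Combined with surjectivity of $\psi$, it suffices to produce the upper bound $\dim_{K_i}(A/I)_{ij} \le \delta$ for $j - i$ large, which I would obtain by an inductive reduction: Lemma~\ref{lemma.modFQ} in the simple case (respectively, the above basis relations in the non-simple case) allow any element of $A_{ij}$ to be pushed modulo $I$ into a canonical form spanning a $\delta$-dimensional subspace. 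Once the isomorphism is established in sufficiently large degree, $A/I$ is a twisted ring, so Proposition~\ref{prop.twistednoeth} gives noetherianness of $(A/I)_{\ge 0}$, and the Hilbert basis theorem lifts this to $A_{\ge 0}$.

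The main obstacle is the multiplicative verification in the simple case: one must simultaneously match the canonical reduction modulo $FQ_i$ from Lemma~\ref{lemma.modFQ} with multiplication in $C(F;\sigma)$, pinning down exactly which of $\sigma_0,\sigma_1$ appears at each index. This requires careful bookkeeping of the trace pairing of Lemma~\ref{lemma.dualsimple} and of how the identifications $V^{i*} \cong\,_iF_{i+1}$ interact across adjacent degrees. The non-simple case is more routine but still requires separate treatment of subcases (1a) and (1b) of Lemma~\ref{lemma.twosidedclass}.
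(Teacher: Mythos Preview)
Your proposal is correct and follows essentially the same approach as the paper: reduce modulo $I$ to a canonical $\delta$-dimensional form using Lemma~\ref{lemma.modFQ} (simple case) or the basis relations $y^*y=-x^*x$, $yy^*=-xx^*$ (non-simple case), identify the resulting multiplication with that of a full twisted ring, invoke Proposition~\ref{prop.twistednoeth}, and then lift via Hilbert basis. The paper organizes this by computing $A/I$ directly rather than constructing your map $\psi$ first, but the content is the same.

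One point you should make explicit: the Hilbert basis argument \cite[Lemma~3.2.2]{vandenbergh} requires the normal family $g$ to consist of non-zero-divisors. The paper isolates this as a separate step (Proposition~\ref{prop:gisnzd}), deducing it from the dimension equality $\dim_{K_i}(A/I)_{ij}=\delta$ together with $\dim_{K_i}A_{ij}=j-i+1$: since $I_{ij}$ is the image of multiplication by $g_{j-\delta}$ on $A_{i,j-\delta}$, comparing dimensions forces this multiplication to be injective. Your dimension count already contains this, but you pass over it in the phrase ``the Hilbert basis theorem lifts this to $A_{\geq 0}$.'' Also, your guess for the twisting automorphisms in the non-simple case is slightly off: the paper finds that they alternate between $\tau$ and $\sigma^{-1}$ in case (1a), and between $\sigma$ and $\sigma^{-1}$ in case (1b), depending on parity---not a pure twist by $\sigma$ alone.
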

The proof divides into the two cases and will be completed in the following two subsections.

\subsection{Proof case 1: $V$ simple} \label{section.twisted3}
We retain the notation from Section \ref{subsection:simplecase} so $V = \,_0F_1$. We wish to show that the $g_i$ are non-zero-divisors of $A$ and compute the quotient $A/I$. We first note the following.
\begin{lemma}  \label{lemma.bimodulestr}
Let $j>i$. The natural $F$-bimodule structure on $T_{ij}$ induces an $F$-bimodule structure on $(A/I)_{ij}$.
\end{lemma}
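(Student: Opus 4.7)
The plan is to show that the preimage in $T$ of the defining ideal of $A/I$, namely $J := R + \tilde{I}$ where $\tilde{I}$ is the preimage in $T$ of $I$, is stable under the natural left and right $F$-actions on $T_{ij}$ for $j > i$. Once established, the quotient map $T \twoheadrightarrow A/I$ transports the $F$-bimodule structure on $T_{ij}$ to one on $(A/I)_{ij}$.

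First I would make the natural $F$-bimodule structure on $T_{ij}$ explicit. Writing $T_{ij} = V^{i*} \otimes_{K_{i+1}} \cdots \otimes_{K_{j-1}} V^{(j-1)*}$, the leftmost factor $V^{i*}$ is either ${}_0F_1$ or ${}_1F_0$, hence has underlying set $F$; multiplication by $F$ on this factor endows $T_{ij}$ with a left $F$-action extending the left $K_i$-action. The rightmost factor supplies a commuting right $F$-action in the same way. Next I would identify $J$ with the two-sided ideal of $T$ generated by the sub-bimodules $FQ_k$, for all $k \in \mathbb{Z}$. By the evident analogue for every $k$ of Lemma~\ref{lemma.Fcentral}(3), $FQ_k = Kh_k + Kg_k$; the summand $Kh_k$ generates $Q_k$, hence lies in $R$, while $g_k$ lies in $\tilde{I}$, so $FQ_k \subseteq J$. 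Conversely, both $R$ and $\tilde{I}$ are contained in the ideal generated by the $FQ_k$, so equality holds.

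The key step is then to check that $J_{ij}$ is closed under the left and right $F$-actions for $j > i$. A typical generator of $J_{ij}$ has the form $t \otimes q \otimes t'$ with $q \in FQ_k$, $t \in T_{ik}$, and $t' \in T_{k+2, j}$ for some $i \leq k \leq j-2$. For left multiplication by $a \in F$: if $k > i$, then $a$ simply acts on the leftmost factor of $t$ in $T_{ik}$, keeping the product in $T_{ik} \cdot FQ_k \cdot T_{k+2,j} \subseteq J$; if $k = i$, then $a \cdot q$ lies in $FQ_i$ because $FQ_i$ is an $F$-subbimodule of $V^{i*} \otimes V^{(i+1)*}$ by Lemma~\ref{lemma.Fcentral}(1), and again the product lies in $J$. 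The right action is handled symmetrically, with the delicate boundary case being $k + 2 = j$.

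The main obstacle is precisely these boundary cases $k = i$ and $k + 2 = j$, where the $F$-action cannot be absorbed into a neighboring $T$-factor but must be absorbed into $FQ_k$ itself; this is exactly the content of the $F$-bimodule assertions in Lemma~\ref{lemma.Fcentral}. Once this is settled, the induced quotient $F-F$-bimodule structure on $(A/I)_{ij} = T_{ij}/J_{ij}$ manifestly extends the $K_i-K_j$-bimodule structure, completing the proof.
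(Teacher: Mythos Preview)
Your proposal is correct and follows essentially the same approach as the paper: both observe that the kernel $J$ of $T \to A/I$ is the two-sided ideal generated by the degree-two pieces $J_{i,i+2} = FQ_i$, which are $F$-subbimodules by Lemma~\ref{lemma.Fcentral}, and conclude that each $J_{ij}$ is $F$-stable. The paper's proof is a terse two lines that leave the verification of $F$-stability implicit, whereas you spell out the boundary cases $k=i$ and $k+2=j$ where the $F$-action must be absorbed into $FQ_k$ itself; this is exactly the content the paper sweeps into ``so we are done.''
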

\begin{proof}
Let $J$ be the ideal of $T$ corresponding to $I$ so $T/J\simeq A/I$. Note that $J_{ii+2}$ is an $F$-bimodule by  Lemma~\ref{lemma.Fcentral}. Now $J$ is the ideal of $T$ generated by the $J_{ii+2}$ so we are done.
\end{proof}

\begin{lemma} \label{lemma.AmodI}
For $i< j$, $F \otimes 1 \otimes \ldots \otimes 1$ is a left $K_i$-space complement to $I_{ij}$ in $A_{ij}$. In particular, the left and right $F$-dimension of $(A/I)_{ij}$ is 1.
\end{lemma}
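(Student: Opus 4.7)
The plan is to establish the complement statement by combining an upper bound and a lower bound on the left $K_i$-dimension of $(A/I)_{ij}$, both equal to $2$, so that the canonical map from $F$ forces the result.

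First, to see that $F\otimes 1\otimes\cdots\otimes 1$ spans $A_{ij}$ modulo $I_{ij}$, I would iterate Lemma~\ref{lemma.modFQ} from right to left. Any pure tensor $a_1\otimes a_2\otimes\cdots\otimes a_{j-i}$ in $T_{ij}$ is congruent, modulo a factor lying in $T_{i,j-2}\otimes FQ_{j-2}\subseteq$ (the ideal generated by $g_{j-2}$), to $a_1\otimes\cdots\otimes a_{j-i-1}\sigma(a_{j-i})\otimes 1$; applying the same move at positions $(j-i-2,j-i-1)$, then $(j-i-3,j-i-2)$, and so on, absorbs all the later factors into the first one. Thus the image of $F\otimes 1\otimes\cdots\otimes 1$ together with $I_{ij}$ already spans $A_{ij}$ as a left $K_i$-space, and in particular $\dim_{K_i}(A/I)_{ij}\le 2$.

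For the matching lower bound, I would use normality of the family $\{g_l\}$ established in Proposition~\ref{prop.normalfamily}. Since $g_l A_{l+2,j}=A_{l,j-2}g_{j-2}$ for every $l$, repeated application gives
\[
I_{ij}\;=\;\sum_{l=i}^{j-2}A_{i,l}g_lA_{l+2,j}\;=\;A_{i,j-2}g_{j-2},
\]
so $I_{ij}$ is a homomorphic image of $A_{i,j-2}$ as left $K_i$-module. By Lemma~\ref{lemma.eulerses}(2), $\dim_{K_i}A_{i,j-2}=j-i-1$ and $\dim_{K_i}A_{ij}=j-i+1$, so $\dim_{K_i}(A/I)_{ij}\ge 2$.

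Combining these bounds gives $\dim_{K_i}(A/I)_{ij}=2$. Since $F\otimes 1\otimes\cdots\otimes 1$ has left $K_i$-dimension at most $2$ and surjects onto $(A/I)_{ij}$, the surjection must be an isomorphism; in particular $F\otimes 1\otimes\cdots\otimes 1$ injects into $A_{ij}$ and meets $I_{ij}$ only in zero, giving the desired complement. The statement about $F$-dimension then follows from the $F$-bimodule structure on $(A/I)_{ij}$ provided by Lemma~\ref{lemma.bimodulestr}: as an $F$-module on either side with underlying $K$-dimension $2$, it must be one-dimensional. The main bookkeeping hurdle will be the iteration of Lemma~\ref{lemma.modFQ}, where one has to verify that the ``error terms'' produced at each step really belong to $I$ (rather than merely to $FQ_l$ in an intermediate tensor slot), which is immediate because $FQ_l=Fg_l\subseteq(g_l)\subseteq I$.
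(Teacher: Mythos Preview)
Your proof is correct and follows essentially the same approach as the paper: both argue that iterating Lemma~\ref{lemma.modFQ} gives surjectivity of $F\otimes 1\otimes\cdots\otimes 1 \to (A/I)_{ij}$, and both obtain injectivity from the dimension bound $\dim_{K_i} I_{ij}\le \dim_{K_i} A_{i,j-2}=j-i-1$. The only cosmetic difference is that the paper phrases the counting in terms of $F$-dimension (the map is left $F$-linear, the source is $1$-dimensional over $F$, the target is nonzero) whereas you phrase it in terms of $K_i$-dimension being exactly $2$; these are equivalent.
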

\begin{proof}
Note that $I_{ij} \neq A_{ij}$ since the left $K_i$-dimension of $I_{ij}$ is at most that of $A_{ij-2}$. Lemma~\ref{lemma.modFQ} and induction shows that every element of $A_{ij}$ is congruent modulo $I$ to one of the form $a \otimes 1 \otimes \ldots \otimes 1$. Hence the natural map
$$ F \otimes 1 \otimes \ldots \otimes 1 \rightarrow (A/I)_{ij} $$
is surjective. It must be injective as it is left $F$-linear and the left hand side has $F$-dimension 1 while the right hand side has dimension at least one. The lemma now follows.
\end{proof}
This lemma and Lemma~\ref{lemma.modFQ} completely describes the quotient algebra $A/I$ for given positive degree elements $a \otimes 1 \otimes \ldots \otimes 1 \in A_{ij}, b \otimes 1 \otimes \ldots \otimes 1 \in A_{jl}$, their product is congruent modulo $I$ to $a\sigma^{j-i}(b) \otimes 1 \otimes \ldots \otimes 1$ where $\sigma^{j-i}$ is the alternating product of $j-i$ $\sigma_0$'s and $\sigma_1$'s, the right most one being $\sigma_0$ if $j$ is even, and $\sigma_1$ if $j$ is odd. In other words, $A/I$ is a twisted ring on $F$ and it is in fact a domain whose ring of quotients is the corresponding full twisted ring defined above. This establishes the first assertion in Theorem~\ref{thm.Anoeth} when $V$ is simple.

Lemma~\ref{lemma.AmodI} also gives the following result.
\begin{proposition}  \label{prop:gisnzd}
Left and right multiplication by $g_i$ are monomorphisms.
\end{proposition}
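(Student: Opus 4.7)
The plan is a direct dimension count exploiting the two preceding lemmas. First I would observe that by normality (Proposition~\ref{prop.normalfamily}), for $j \geq i+2$ one has
$$I_{ij} \;=\; g_i\, A_{i+2,\, j} \;=\; A_{i,\, j-2}\, g_{j-2}.$$
Indeed, any spanning element of $I_{ij}$ has the form $a \otimes g_k \otimes b$ with $a \in A_{i,k}$ and $b \in A_{k+2,j}$; normality lets us rewrite $a \otimes g_k = g_i \otimes a'$ for some $a' \in A_{i+2, k+2}$, thereby sliding every $g$ to the leftmost position (and dually to the rightmost).

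Next I would tally dimensions. The Euler exact sequence (Lemma~\ref{lemma.eulerses}(2)) gives the right $K_j$-dimension of $A_{ij}$ equal to $j-i+1$, while Lemma~\ref{lemma.AmodI} gives right $F$-dimension $1$ (hence right $K_j$-dimension $[F:K_j]=2$) for $(A/I)_{ij}$ when $i < j$. Therefore
$$\dim_{K_j} I_{ij} \;=\; (j-i+1) - 2 \;=\; j-i-1 \;=\; \dim_{K_j} A_{i+2,\, j}.$$

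The final step is to observe that left multiplication by $g_i$ is right $K_j$-linear by associativity in $A$, so it defines a right $K_j$-linear surjection $A_{i+2,j} \twoheadrightarrow I_{ij}$ between finite-dimensional right $K_j$-vector spaces of the same dimension. Such a map is necessarily an isomorphism, in particular injective; for $j \leq i+1$ the domain is zero so injectivity is trivial. The argument for right multiplication is dual: right multiplication by $g_i$ is left $K_l$-linear, and it defines a surjection $A_{l,i} \twoheadrightarrow A_{l,i}\, g_i = I_{l,i+2}$ between left $K_l$-vector spaces of dimension $i-l+1$, hence is again an isomorphism.

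There is no genuine obstacle here; all of the substantive work has already been carried out in the Euler sequence and in Lemma~\ref{lemma.AmodI}. The only point requiring momentary care is identifying the $K$-linear structure (right $K_j$ on the left-multiplication side, left $K_l$ on the right-multiplication side) under which the multiplication map is linear, so that ``surjective + equal finite dimension'' truly forces injectivity.
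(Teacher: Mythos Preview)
Your proof is correct and is essentially the same argument as the paper's: a dimension count combining the Euler sequence (Lemma~\ref{lemma.eulerses}(2)) with the computation of $(A/I)_{ij}$ in Lemma~\ref{lemma.AmodI}. The only cosmetic differences are that the paper treats right multiplication first using left $K_i$-dimension (you treat left multiplication first using right $K_j$-dimension), and the paper does not spell out the normality step identifying $A_{ij}g_j$ with $I_{i,j+2}$, taking it as immediate from the definition of $I$.
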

\begin{proof}
We let $\dim$ denotes left $K_i$-dimension. Recall from Lemma~\ref{lemma.eulerses} that $\dim A_{ij} = j-i+1$. Then Lemma~\ref{lemma.AmodI} gives
$$\dim A_{ij}g_j = \dim A_{ij+2} - 2 = \dim A_{ij}$$
so right multiplication by $g_j$ must be injective. A symmetric argument gives the case of left multiplication.
\end{proof}


Note that $(A/I)_{ii} = K_i$ and $A/I$ equals the full twisted ring in positive degrees.
It follows from Proposition~\ref{prop.twistednoeth} that $A/I$ is noetherian. Now Proposition~\ref{prop:gisnzd} and the standard Hilbert basis argument show that $A$ is noetherian. This completes the proof of the theorem when $V$ is simple.

\subsection{Proof case 2: $V$ non-simple} \label{section.easytwist}
We retain the notation from Section \ref{section.oned}.  Thus $I_{i i+1}$ is the $K$-bimodule generated by $y$ for $i$ even and $x^{*}$ for $i$ odd.
Therefore, if $j > i$, then $(A/I)_{ij}$ is spanned by the image of a single simple tensor whose components consist of alternating $x$'s and $y^{*}$'s. In particular, the right and left $K$-dimension of $(A/I)_{ij}$ is $\leq 1$.  We next note that since the sequence
$$
A_{ij} \otimes I_{j j+1} \rightarrow A_{i j+1} \rightarrow (A/I)_{i j+1} \rightarrow 0
$$
whose left map is induced by multiplication is exact, the right and left dimension of $(A/I)_{i j+1}$ is $\geq 1$.  Therefore, it must equal one, which implies that the left map is monic.  It follows that left and right multiplication by $y$ or $x^{*}$ is monic, and that $A/I$ is a twisted ring on $K$. The twisting automorphisms are, in the notation of Lemma~\ref{lemma.twosidedclass}(1a) respectively (1b), $\tau$ or $\sigma^{-1}$ depending on parity respectively $\sigma$ or $\sigma^{-1}$ depending on parity. In fact, $A/I$ equals the full twisted ring in non-negative degrees so is noetherian by Proposition~\ref{prop.twistednoeth}. Once again, it follows from the Hilbert basis theorem that $A$ is noetherian too. The proof of Theorem~\ref{thm.Anoeth} is now complete.

\section{$A$ is a domain with global dimension 2}  \label{sec.domain}
In this section, we show that $A$ has global dimension two. This allows us to show that the Gelfand-Kirillov dimension is particularly nice and we have a theory of Hilbert polynomials. We then show that $A$ is a domain, generalizing \cite[Theorem 3.7]{nyman}.

Let $D$ be a $\mathbb{Z}$-indexed algebra. We say that $D$ is {\it connected} if $D_{ij} = 0$ whenever $i>j$ and all the $D_{ii}$ are fields. We assume further that $D$ is noetherian. In this section and the next, we let $\bar{D}$ be the $D$-bimodule $D/D_{\geq 1} = \oplus D_{ii}$. Many results which are true for connected graded noetherian algebras are true for connected indexed noetherian algebras too. We will not of course re-write the proofs of all these extensions, but hopefully demonstrate enough that the reader is aware of the main differences between the two theories, and can easily verify all unproved assertions by comparing with standard graded proofs.

The graded version of Nakayama's lemma holds for $D$-modules: if $M$ is a left bounded $D$-module and $M \uotimes \bar{D} = 0 $ then $M = 0$. This applies in particular to any noetherian module. By right exactness of $\uotimes$, if $M$ is noetherian and $\phi: P \ra M$ is a homomorphism such that $P \uotimes \bar{D} \ra M \uotimes \bar{D}$ is surjective, then $\phi$ is surjective too. We may thus talk about {\it minimal projective covers} and {\it minimal projective resolutions} of a noetherian module.

\begin{proposition}  \label{prop.gldim2}
The global dimension of $A$ is 2.
\end{proposition}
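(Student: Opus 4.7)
The plan is to use the Euler exact sequence of Lemma~\ref{lemma.eulerses}(3) to produce an explicit length-two projective resolution of each simple left $A$-module, then invoke the indexed Nakayama machinery set up just above the proposition to propagate this bound to all modules.

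Fix $j \in \mathbb{Z}$. Lemma~\ref{lemma.eulerses}(3) provides the exact sequence of left $A$-modules
$$
0 \to Ae_j \to (Ae_{j+1})^{\oplus 2} \to Ae_{j+2} \to A_{j+2,j+2} \to 0,
$$
which is a length-two projective resolution of the simple left module $A_{j+2,j+2}$, so $\operatorname{pd}_A A_{ii} \leq 2$ for every $i$. By the graded Nakayama argument for connected $\mathbb{Z}$-indexed algebras, any noetherian left $A$-module $M$ has $\operatorname{pd}_A M$ computed by the largest $n$ such that $\operatorname{Tor}_n^A(M, \bar A) \neq 0$, and this is bounded above by $\operatorname{pd}_A \bar A = \sup_i \operatorname{pd}_A A_{ii} \leq 2$. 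Hence the left global dimension of $A$ is at most $2$.

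For the reverse inequality, the above resolution is minimal: the generators of $Ae_k$ live in degree $k$, and each differential sends them to elements of strictly smaller degree, hence into $A_{\geq 1}$ times the generators of the next term. This forces $\operatorname{Tor}_2^A(A_{j+2,j+2}, \bar A) \neq 0$, giving $\operatorname{pd}_A A_{j+2,j+2} = 2$ and the left global dimension is at least $2$. The corresponding statement for right modules follows from the symmetric right-handed version of the Euler resolution (which can be read off Lemma~\ref{lemma.eulerses}(2) by fixing $i$ and varying $j$, noting that $Q_j$ and $V^{j+1*}$ contribute one and two free right summands respectively). I expect no substantial obstacle; the main point to verify carefully is the minimality of the Euler resolution, and that the indexed-Nakayama reduction of ``$\operatorname{gl\,dim} = \operatorname{pd} \bar A$'' survives passage from the usual graded setting to the $\mathbb{Z}$-indexed one, but both are already implicit in the framework developed in the preamble to the proposition.
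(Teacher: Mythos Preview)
Your proposal is correct and follows essentially the same route as the paper: both arguments use the Euler exact sequence of Lemma~\ref{lemma.eulerses}(3) to resolve each simple $A_{j+2,j+2}$ in length two, then appeal to the indexed Nakayama machinery to conclude. The only cosmetic difference is that the paper phrases the upper bound as ``$\uTor^A_3(-,\bar A)=0$, hence the second syzygy of any noetherian module is projective by the $\uTor_1$-criterion,'' whereas you package the same computation as ``$\operatorname{pd}_A \bar A \le 2$ implies $\operatorname{gl.dim} A \le 2$''; these are equivalent, and your explicit check of minimality for the lower bound is a detail the paper leaves implicit.
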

\begin{proof}
We show how the standard graded proof goes through. First note that a noetherian module $M$ is projective if and only if $\uTor^A_1(M, \bar{A}) =0$. Indeed if $\uTor^A_1(M, \bar{A}) =0$, we form the exact sequence
$$ 0 \ra N \ra P \xrightarrow{\phi} M \ra 0 $$
with $\phi$ a minimal projective cover. Tensoring with $\bar{A}$ shows that $N \uotimes \bar{A} = 0$ so $N$ is zero too.

Consider the projective resolution of the left $A$-module $A_{j+2,j+2}$ given by the Euler exact sequence (Lemma \ref{lemma.eulerses}(3)). Since the resolution has length 2, we see $\uTor^A_3(-,\bar{A}) = 0$ and given any noetherian right module $M$, the long exact sequence for Tor shows that its second syzygy must be projective. The proof for left modules uses the right hand version of \ref{lemma.eulerses}.
\end{proof}

By Proposition~\ref{prop.gldim2}, any noetherian object $M \in {\sf Gr }A$ has a finite resolution by a finite direct sum of $e_j A$, so exactness of the dimension function and Lemma~\ref{lemma.eulerses}(2) ensures that $f_M(n) :=\operatorname{dim}_{K_n}M_{n}$ is eventually of the form $cn+d$, with $c, d \in \mathbb{Z}$ and $c \geq 0$. If $c \neq 0$ then we say $c$ is the {\em multiplicity} of $M$, and otherwise $d$ is the {\em multiplicity} of $M$. Following usual conventions for defining the Gelfand-Kirillov dimension, we will look at the partial sums $\sum_{i=-\infty}^n f_M(i)$ which is also eventually a polynomial which we denote by $h_M(n)$. We define $\dim M = \deg h_M$. Of course, $\dim M = \deg f_M + 1$ so long as $M$ is not eventually zero. As usual, given an exact sequence of noetherian modules $0 \ra M' \ra M \ra M'' \ra 0$, we have $\dim M = \max\{\dim M', \dim M'' \}$.

We say that a noetherian module $M$ is {\em critical} (with respect to $\dim$) if for every non-zero submodule $N < M$, we have $\dim M/N < \dim M$. We say that it is {\em $d$-pure} (with respect to $\dim$) if $\dim M = d$ and $\dim N < d$ for all $N < M$.

We can now show $A$ is a domain.

\begin{theorem}  \label{thm.Adomain}
\begin{enumerate}
 \item $e_i A$ is 2-pure, critical and uniform.
 \item $A$ is a domain.
\end{enumerate}
In particular, by Proposition~\ref{prop.ringquotients}, $A$ has a ring of fractions.
\end{theorem}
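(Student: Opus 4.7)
The plan is to establish part (2) first and then deduce part (1) as a quick consequence. My strategy for (2) exploits the normal family $g = \{g_i\}$ constructed in Section~\ref{sec.normal} together with the fact (proved in Section~\ref{section.twisted}) that $A/I$ is a twisted ring on $F$ or $K$ according to whether $V$ is simple or not, hence a domain: in positive degrees it agrees with a full twisted ring, and in diagonal degrees it is a field.

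The key structural input is a clean description of the $I$-adic filtration. Using normality of $g$ together with Proposition~\ref{prop:gisnzd} and its non-simple counterpart in Section~\ref{section.easytwist}, which say each $g_l$ is a non-zero-divisor, one obtains
$$I^n_{ij} = G_i^n\,A_{i+n\delta,\,j} = A_{i,\,j-n\delta}\,G^n_{j-n\delta},$$
where $G_i^n := g_ig_{i+\delta}\cdots g_{i+(n-1)\delta} \in A_{i,\,i+n\delta}$. Since $I^n_{ij} = 0$ whenever $j - i < n\delta$, the intersection $\bigcap_n I^n$ vanishes, so each nonzero $a \in A$ has a well-defined valuation $\nu(a) := \max\{n : a \in I^n\}$ and a unique decomposition $a = G_i^{\nu(a)}\widetilde a$ with $\widetilde a \notin I$.

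Given nonzero $a \in A_{ij}$ and $b \in A_{jk}$ with $p = \nu(a),\ q = \nu(b)$, I write $a = G_i^p\widetilde a$ and $b = G_j^q\widetilde b$. Iterated normality produces $\widehat a \in A_{i+(p+q)\delta,\,j+q\delta}$ with $\widetilde a\,G_j^q = G_{i+p\delta}^q\,\widehat a$, so $ab = G_i^{p+q}\widehat a\,\widetilde b$. The main obstacle is showing $\widehat a \notin I$; this is where I expect the technical difficulty to sit. If instead $\widehat a = g_{i+(p+q)\delta}\widehat a'$, then combining this with the symmetric description $I^{q+1}_{i+p\delta,\,j+q\delta} = A_{i+p\delta,\,j-\delta}\,g_{j-\delta}\,G_j^q$ and the injectivity of right multiplication by $G_j^q$ forces $\widetilde a \in A_{i+p\delta,\,j-\delta}\,g_{j-\delta} = I_{i+p\delta,\,j}$, contradicting $\widetilde a \notin I$. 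With $\widehat a, \widetilde b \notin I$, their images in the domain $A/I$ are nonzero, hence $\widehat a\widetilde b \notin I$ and in particular is nonzero. Injectivity of left multiplication by $G_i^{p+q}$ then yields $ab \neq 0$, establishing (2).

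Part (1) follows quickly from (2). For any nonzero $a \in A_{ij}$, the domain property gives an embedding $aA \cong e_jA \hookrightarrow e_iA$, so $aA$ has dimension two, and the Hilbert function of $e_iA/aA$ is eventually the constant $j-i$, hence $\dim(e_iA/aA) \leq 1$. Any nonzero submodule $N \subseteq e_iA$ contains some such $aA$, so $e_iA/N$ is a quotient of $e_iA/aA$ and has dimension at most one; thus $e_iA$ is critical. Criticality forces $2$-purity (any nonzero submodule has dimension two by additivity of $\dim$) and uniformity (two disjoint nonzero submodules would violate criticality via $N_2 \hookrightarrow e_iA/N_1$), completing (1); the existence of a ring of fractions is then immediate from Proposition~\ref{prop.ringquotients}.
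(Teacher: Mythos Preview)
Your proof is correct and takes a genuinely different route from the paper's.

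\textbf{What the paper does.} The paper proves part~(1) first and gets (2) as a byproduct. It introduces the ideal $J$ with $J_i = $ sum of all submodules of $e_iA$ of dimension $\leq 1$, shows that $A/J$ already satisfies all the conclusions (purity, criticality, uniformity, domain), and then argues that $J=0$. The last step is the hard one: it uses a delicate lemma bounding the \emph{left} $K$-dimensions of $J_{ij}$ uniformly, and then invokes $\operatorname{gl.dim} A = 2$ (Proposition~\ref{prop.gldim2}) to conclude that each $J_i$, being a second syzygy, is projective of dimension $\leq 1$, hence zero.

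\textbf{What you do.} You prove (2) first by an associated-graded argument: use the $I$-adic filtration coming from the normal family $g$, the fact that $\bigcap_n I^n = 0$ by degree reasons, and the domain property of $A/I$ established in Section~\ref{section.twisted}. The only real work is the compatibility step showing $\widehat a\notin I$ when $\widetilde a\notin I$, which you handle cleanly via the two-sided description of $I^{q+1}$ and injectivity of multiplication by $G_j^q$. You then deduce (1) from (2) by the easy dimension count on $e_iA/aA$.

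\textbf{What each buys.} Your argument is more elementary and logically earlier: it uses only Sections~\ref{sec.normal}--\ref{section.twisted} and the Hilbert function from Lemma~\ref{lemma.eulerses}, and in particular avoids Proposition~\ref{prop.gldim2} entirely. It is essentially the classical ``$\operatorname{gr}_I A$ domain $\Rightarrow$ $A$ domain'' argument adapted to the indexed setting. The paper's approach, by contrast, makes no use of the normal element $g$ for this theorem and would transport to settings where such a filtration is unavailable, at the cost of needing the global-dimension bound and the somewhat intricate left-dimension lemma.
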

\begin{proof}
 (Following \cite[Theorem~3.7]{nyman}.) Note $\dim e_i A = 2$ and let $J_i < e_i A$ be the sum of all submodules of dimension $\leq 1$. Since $\dim a_{ij} J_j \leq \dim J_j$ for any $a \in A_{ij}$, we have that $J := \oplus_i J_i$ is an ideal of $A$ with $J_{ij} = (J_i)_j$.

We first note that the conclusions of the theorem hold if $A$ is replaced with $A/J$. Indeed, $e_j (A/J)$ is 2-pure by definition of $J$ and exactness of $\dim$. Hence any non-zero submodule $N$ of $e_j(A/J)$ must be 2-dimensional with multiplicity at least 1. However, $e_j A$ has multiplicity 1 so the dimension of $(e_j\, A/J) / N$ must be at most 1. Hence $e_j (A/J)$ is critical. It is also uniform, for given two distinct non-zero submodules with zero intersection, their sum must be a submodule with multiplicity at least 2, a contradiction. We finally show that any non-zero $a_{ij} + J_{ij} \in A_{ij}/J_{ij}$ is a non-zero-divisor. Let $\phi: e_j (A/J) \ra e_i (A/J)$ be left multiplication by $a_{ij}$. It is non-zero as $a_{ij}e_j \notin J$ and thus injective since $e_j (A/J)$ is critical and $e_i (A/J)$ is pure.

It thus remains to show that $J = 0$. Unlike in \cite{nyman}, we will need to distinguish the left and right vector space dimensions of a $K_{i}-K_{j}$-bimodule, so we use the notation $l.\dim_K ,r.\dim_K$ for these respectively.

\begin{lemma}  \label{lemma.leftdimN}
 The left $A$-module $J e_i$ has dimension $\leq 1$.
\end{lemma}
\begin{proof}
We in fact show that the left dimensions $l.\dim_K J_{ij}$ are bounded. We first consider $J_0 = e_0 J$ which has dimension $\leq 1$ say with multiplicity $d$. We pick $j_0$ large enough so $r.\dim_K J_{0j} = d$ for all $j \geq j_0$. Let $g_j\in A_{j, j+\delta}$ be the normal element constructed in Section~\ref{sec.normal}. Then right multiplication by $g_j: J_{0j} \ra J_{0 j+\delta}$ is both left $K$-linear and, by normality, skew right $K$-linear. Since it is injective (Proposition~\ref{prop:gisnzd}) and the right dimensions are the same, it must be bijective. It follows that $l.\dim_K J_{0j} = l.\dim_K J_{0 j+\delta}$. Furthermore, we see that the left dimensions $l.\dim_K J_{0j}, j\geq j_0$ are bounded, say by $d'$. Increasing $j_0,d'$ if necessary, we may further assume that $l.\dim_K J_{1j} \leq d'$ for $j \geq j_0$.

We now show that $l.\dim_K J_{ij} \leq d'$ for all $i,j$. Let $A(2)$ denote the $\mathbb{Z}$-indexed algebra which is $A$ re-indexed so that $A(2)_{ij} = A_{i+2,j+2}$. Then the right double dual gives an isomorphism $A \simeq A(2)$. In particular, we see that $J_{ij} \simeq J_{i+2,j+2}$ as $K-K$-bimodules. Hence $l.\dim_K J_{ij} \leq d'$ so long as $j-i \geq j_0$. However, the non-zero elements of $A_{j-1,j}$ are non-zero-divisors by Lemma~\ref{lemma.Vnzd} so $l.\dim_K J_{ij-1} \leq l.\dim_K J_{ij}$ and we see $l.\dim_K J_{ij} \leq d'$ for all $i,j$.
\end{proof}
We now finish the proof of the theorem by showing $J_i = 0$. Let $L < Ae_i$ be its left annihilator. Let $a_1,\ldots, a_m$ be a finite set of homogeneous generators for $J_i$, $L_s$ be their left annihilators and $d_s$ be their degrees. Then we have the following inequality of left module dimensions
$$ \dim A e_i /L \leq \dim \oplus_{s=1}^m Ae_i/L_s  = \max_{s} \{\dim A a_s \} \leq \max_{s} \{\dim Je_{d_s}\} \leq 1.$$
Hence $\dim L = \dim A e_i = 2$ and by the lemma, we can find some $h\in \mathbb{Z}$ and $a_h \in L_h$ such that $a_h \notin J$. Now $A/J$ is a domain and left multiplication by $a_h$ annihilates $J_i$ so we have an exact sequence
$$ 0 \ra J_i \ra e_i A \xrightarrow{a_h} e_h A \ra e_h A / a_h e_i A \ra 0 .$$
It follows that $J_i$ is a second syzygy and since $\operatorname{gl.dim} A = 2$, it must be projective. However, the only projective module of dimension $\leq 1$ is 0.
\end{proof}

\section{Cohomology of $\mathbb{P}^{nc}(V)$ and bimodule species} \label{section.derived}

In this section, we introduce the non-commutative projective line $\mathbb{P}^{nc}(V)$, defined via Artin-Zhang's theory of non-commutative projective geometry \cite{az}. We then compute the cohomology of line bundles over $\mathbb{P}^{nc}(V)$. This will give us the classical Serre vanishing result, and allow us  to find a tilting bundle on $\mathbb{P}^{nc}(V)$. In particular, we obtain our desired derived equivalence between $\mathbb{P}^{nc}(V)$ and bimodule species. We will also show how $\mathbb{P}^{nc}(V)$ gives rise to the Dlab and Ringel's preprojective algebra $\Pi(V)$.

We now introduce notation that will be in effect for the remainder of this paper. Let $D$ be a $\mathbb{Z}$-indexed or graded algebra. We say a $D$-module is {\em graded torsion} if it is the direct limit of right bounded modules. Following Artin-Zhang we let ${\sf Proj} D$ denote the quotient category ${\sf Gr }D/{\sf Tors }D$ where ${\sf Tors }D$ denotes the full subcategory of ${\sf Gr }D$ consisting of graded torsion modules. In case $A = \mathbb{S}^{nc}(V)$ we write  $\mathbb{P}^{nc}(V)$ for ${\sf Proj}A$. Objects of $\mathbb{P}^{nc}(V)$ will often be referred to as {\em sheaves on } $\mathbb{P}^{nc}(V)$. The noetherian objects of $\mathbb{P}^{nc}(V)$ will be called {\em coherent sheaves} and their full subcategory is denoted ${\sf Coh }\mathbb{P}^{nc}(V)$.  We let $\pi:{\sf Gr }A \rightarrow \mathbb{P}^{nc}(V)$ denote the quotient functor, we let $\omega:\mathbb{P}^{nc}(V) \rightarrow {\sf Gr }A$ denote the section functor, which is the right adjoint of $\pi$, and we let $\tau:{\sf Gr }A \rightarrow {\sf Tors }A$ denote the functor sending a module to the sum of all its submodules which are objects of ${\sf Tors }A$.

As we show below, the computation of the cohomology of line bundles will follow from the formula for the right derived functors of $\tau$.  In \cite{duality}, this computation is carried out in case $V$ is algebraic and $K_{0}=K_{1}$.  Now suppose $V$ is non-algebraic or $K_{0} \neq K_{1}$.  In this case, one can check that the proofs and results in \cite{duality} through \cite[Corollary 4.12]{duality} still hold, as long as one modifies all statements to take into account that we may have $K_{0} \neq K_{1}$.  This allows us to import the computation of the right derived functors of $\tau$ to our setting, and to employ this result in order to compute the cohomology of line bundles.

To simplify the exposition in this and the following sections, we define
$\mathcal{A}_{i}:= \pi e_{i}A$. The $\mathcal{A}_i$ are the analogues of the line bundles $\mathcal{O}(-i)$ on $\mathbb{P}^1$ so we will refer to them as {\em line bundles} on $\mathbb{P}^{nc}(V)$ too.

In order to motivate our computation, recall that in the classical case $\mathbb{P}^1 = {\sf Proj }\, \mathbb{S}(K^2)$ where $\mathbb{S}(K^2)$ is the (commutative) symmetric algebra in two variables, we have
$$ \Ext^i_{\mathbb{P}^1}(\mathcal{O}(-m), \mathcal{O}(-n)) =
H^i(\mathbb{P}^1,\mathcal{O}(m-n)) =
\begin{cases}
\mathbb{S}^{m-n}(K^2) & \text{if } i=0 \\
\mathbb{S}^{n-2-m}(K^2)^* & \text{if } i=1 \\
0 & \text{else.}
\end{cases}
$$
We now proceed to show that we get a similar result in the non-commutative case.  By adjointness, we have the following isomorphism of functors from $\mathbb{P}^{nc}(V)$ to ${\sf Mod }K$:
\begin{eqnarray*}
\operatorname{Hom}_{\mathbb{P}^{nc}(V)}(\pi(e_{m}A),-) & \cong & \operatorname{Hom}_{{\sf Gr }A}(e_{m}A, \omega(-)) \\
& \cong & \operatorname{Hom}_{{\sf Mod }K}(A_{mm}, \omega(-)_{m}).
\end{eqnarray*}
It follows that
\begin{equation} \label{eqn.cohom}
\operatorname{Hom}_{\mathbb{P}^{nc}(V)}(\mathcal{A}_{m},-) \cong \omega(-)_{m}
\end{equation}
and thus
\begin{equation}   \label{eqn.homs}
\operatorname{Hom}_{\mathbb{P}^{nc}(V)}(\mathcal{A}_{m}, \mathcal{A}_{n}) \cong A_{nm}
\end{equation}
as right $K$-modules.  Furthermore, it is routine to check that the isomorphism (\ref{eqn.homs}) respects the natural left $K$-module structures.

Next, we compute $\operatorname{Ext}^{1}_{\mathbb{P}^{nc}(V)}(\mathcal{A}_{m},\mathcal{A}_{n})$.  We have
\begin{eqnarray} \label{eqn.linebundles1}
\operatorname{Ext}^{1}_{\mathbb{P}^{nc}(V)}(\mathcal{A}_{m},\mathcal{A}_{n}) & \cong & (R^{1}\omega(\mathcal{A}_{n}))_{m} \\
& \cong & (R^{2}\tau(e_{n}A))_{m} \label{eqn.linebundles2} \\
& \cong & A^{*}_{m,n-2}  \cong \ ^*A_{m+2,n}\label{eqn.linebundles3}
\end{eqnarray}
where the first isomorphism is from (\ref{eqn.cohom}), the second follows from \cite[Theorem 4.11]{duality}, and the third follows from \cite[Proposition 3.19 and Lemma 4.9]{duality}.

\begin{lemma}
The isomorphism $\operatorname{Ext}^{1}_{\mathbb{P}^{nc}(V)}(\mathcal{A}_{m},\mathcal{A}_{n}) \rightarrow A^{*}_{m,n-2}$ above is a bimodule isomorphism.
\end{lemma}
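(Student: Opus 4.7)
The aim is to verify that each of the three isomorphisms in the composition
\begin{equation*}
\operatorname{Ext}^1_{\mathbb{P}^{nc}(V)}(\mathcal{A}_m, \mathcal{A}_n) \xrightarrow{\sim} (R^1\omega\, \mathcal{A}_n)_m \xrightarrow{\sim} (R^2\tau(e_nA))_m \xrightarrow{\sim} A^*_{m,n-2}
\end{equation*}
is compatible with both the left $K_n$-action (coming from $\operatorname{End}(\mathcal{A}_n) \cong K_n$ via covariant functoriality of $\operatorname{Ext}^1$ in its second variable) and the right $K_m$-action (coming from $\operatorname{End}(\mathcal{A}_m) \cong K_m$ via contravariant functoriality in the first variable). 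The corresponding $K_n - K_m$-bimodule structure on the target is obtained by $K_n$-dualising the $K_m - K_n$-bimodule $A_{m,n-2}$, using that $n$ and $n-2$ have the same parity.

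The first arrow is derived from the natural isomorphism of functors in (\ref{eqn.cohom}); since this adjunction is natural in both arguments, applying $R^1$ preserves both naturalities and hence both actions. The second arrow comes from the four-term exact sequence
\begin{equation*}
0 \to \tau M \to M \to \omega\pi M \to R^1 \tau M \to 0
\end{equation*}
together with the standard identifications $R^i \omega \pi \cong R^{i+1}\tau$ for $i \geq 1$, all of which are natural in the $A$-module $M$. Taking $M = e_nA$ and using that the $K_n$-action on $e_nA$ is realised by right-multiplication endomorphisms, naturality yields $K_n$-compatibility; compatibility with the $K_m$-action passes through the graded-degree selector $(\cdot)_m$, since every functor involved is a functor of graded modules.

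The principal obstacle is the third arrow: showing that the isomorphism $(R^2\tau(e_nA))_m \cong A^*_{m,n-2}$ supplied by \cite[Proposition 3.19, Lemma 4.9]{duality} is a bimodule map. I would handle this by tracing the construction of that isomorphism, which is produced as the evaluation in bidegree $(m,n-2)$ of a natural transformation of functors defined on ${\sf Bimod}\,A - A$. Because the construction takes place entirely in the bimodule category, both the left $K_n$- and right $K_m$-actions are carried along automatically, and the extension to the non-algebraic case already indicated in the discussion preceding (\ref{eqn.linebundles1}) introduces no change affecting bimodule compatibility. Composing the three bimodule isomorphisms then yields the claim.
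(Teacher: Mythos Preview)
Your proposal is correct and follows essentially the same three-step strategy as the paper: verify that each of the isomorphisms (\ref{eqn.linebundles1}), (\ref{eqn.linebundles2}), (\ref{eqn.linebundles3}) respects the bimodule structure, with the first being routine and the third reduced to tracing the explicit construction in \cite{duality}. The only notable difference is in the treatment of the second arrow: where you invoke naturality of the standard identification $R^{i}\omega\pi \cong R^{i+1}\tau$ directly, the paper makes this explicit by splitting an injective resolution of $e_nA$ into its graded-torsion and torsion-free subcomplexes (via Gabriel's decomposition) and identifying the map as the resulting connecting homomorphism, so that compatibility follows from functoriality of the long exact sequence.
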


\begin{proof}
We need only check the isomorphism is compatible with the left $K$-module structure.  The fact that (\ref{eqn.linebundles1}) is compatible is routine and left to the reader.  Similarly, the proof that (\ref{eqn.linebundles3}) is compatible follows from the explicit description of the isomorphism from the proof of \cite[Theorem 4.4 and Lemma 4.9]{duality}, and we leave the straightforward details to the interested reader.  To prove that (\ref{eqn.linebundles2}) is compatible with left multiplication, we note that if $E^{\bullet}$ is an injective resolution of $e_{n}A$ in ${\sf Gr }A$, then each component is a direct sum of an injective graded torsion submodule and an injective graded torsion free submodule (so that $\tau$ applied to the module is zero) by \cite[Corollaire 2]{gab}.  Thus, as in the proof of \cite[Theorem 11.26]{smithnotes}, if $I^{\bullet}$ denotes the graded torsion subcomplex of $E^{\bullet}$, we get a short exact sequence of complexes of injective objects
$$
0 \rightarrow I^{\bullet} \rightarrow E^{\bullet} \rightarrow Q^{\bullet} \rightarrow 0,
$$
and the map (\ref{eqn.linebundles2}) is the connecting homomorphism
$$
R^{1}\omega(\mathcal{A}_{n}) \cong h^{1}(\omega \pi Q^{\bullet}) \cong h^{1}(Q^{\bullet}) \rightarrow h^{2}(I^{\bullet}) \cong R^{2}\tau(e_{n}A).
$$
Therefore, the fact that (\ref{eqn.linebundles2}) is compatible with multiplication follows from the functorality of the long exact sequence on cohomology.
\end{proof}

If $i>1$, a similar argument using \cite[Corollary 4.7]{duality} shows that
\begin{equation}  \label{eqn.cd1}
\operatorname{Ext}^{i}_{\mathbb{P}^{nc}(V)}(\mathcal{A}_{m},-) \cong (R^{i+1}\tau(-))_{m} = 0.
\end{equation}

Now we find a tilting object for $\mathbb{P}^{nc}(V)$.  To this end, we will need the right hand version of the Euler exact sequence from Lemma \ref{lemma.eulerses}(3) in $\mathbb{P}^{nc}(V)$:
\begin{equation}  \label{eqn.euler}
 0 \ra \mathcal{A}_{i+2} \ra (\mathcal{A}_{i+1})^{\oplus 2} \ra \mathcal{A}_i \ra 0.
\end{equation}
We also recall that the bimodule $V$ gives rise to a species which is defined to be the upper triangular matrix ring
\begin{equation}  \label{eqn.species}
\operatorname{Spe} V :=
\begin{pmatrix}
 K_{0} & V \\ 0 & K_{1}
\end{pmatrix}.
\end{equation}
It is clearly noetherian and hereditary.

\begin{prop} \label{prop.tilting}
For all $i \in \mathbb{Z}$, $\mathcal{T}_i := \mathcal{A}_{i} \oplus \mathcal{A}_{i+1}$ is a tilting object for $\mathbb{P}^{nc}(V)$. Furthermore, $\End_{\mathbb{P}^{nc}(V)} \mathcal{T}_i = \operatorname{Spe} V^{i*}$ so ${\sf Coh}\mathbb{P}^{nc}(V)$ and $\operatorname{Spe} V^{i*}$ are derived equivalent.
\end{prop}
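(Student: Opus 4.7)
The plan is to verify the two defining conditions of a tilting object for $\mathcal{T}_i$: that $\Ext^k_{\mathbb{P}^{nc}(V)}(\mathcal{T}_i, \mathcal{T}_i) = 0$ for all $k \geq 1$ and that $\mathcal{T}_i$ classically generates $D^b({\sf Coh}\,\mathbb{P}^{nc}(V))$, while reading the endomorphism ring off the Hom computations already in hand.

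First, I would compute $\End \mathcal{T}_i$ from (\ref{eqn.homs}). The four blocks $\Hom(\mathcal{A}_a, \mathcal{A}_b) \cong A_{ba}$ with $a, b \in \{i, i+1\}$ give respectively $K_i$, $V^{i*}$ (the generators $A_{i,i+1}$ of $A$), $0$ (since $A_{ab} = 0$ for $a > b$), and $K_{i+1}$. Assembling these in the standard matrix form for the endomorphism ring of a direct sum produces exactly the upper triangular algebra $\operatorname{Spe} V^{i*}$ of (\ref{eqn.species}), and the multiplication agrees with matrix multiplication because it is induced by the product on $A$. For Ext-vanishing, higher Exts die automatically by (\ref{eqn.cd1}); for $\Ext^1$, formula (\ref{eqn.linebundles3}) gives $\Ext^1(\mathcal{A}_m, \mathcal{A}_n) \cong A^*_{m, n-2}$, and for $m, n \in \{i, i+1\}$ we have $n - 2 \leq i - 1 < i \leq m$, so $A_{m, n-2} = 0$ in all four cases.

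For generation, let $\mathcal{S}$ be the smallest thick triangulated subcategory of $D^b({\sf Coh}\,\mathbb{P}^{nc}(V))$ containing $\mathcal{T}_i$. The Euler sequence (\ref{eqn.euler}) is a distinguished triangle, so from $\mathcal{A}_j, \mathcal{A}_{j+1} \in \mathcal{S}$ we pick up $\mathcal{A}_{j+2}$, and the same sequence read backwards recovers $\mathcal{A}_j$ from $\mathcal{A}_{j+1}, \mathcal{A}_{j+2}$; iterating in both directions from $\{\mathcal{A}_i, \mathcal{A}_{i+1}\}$ places every $\mathcal{A}_j$, $j \in \mathbb{Z}$, in $\mathcal{S}$. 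Next, any coherent $\mathcal{F}$ equals $\pi M$ for some finitely generated graded $M$; since $A$ is noetherian (Theorem \ref{thm.Anoeth}) of global dimension two (Proposition \ref{prop.gldim2}), $M$ admits a finite resolution $0 \to P^{-2} \to P^{-1} \to P^{0} \to M \to 0$ by finitely generated graded projectives, each a direct summand of some $\bigoplus_\ell e_{n_\ell} A$. Applying the exact functor $\pi$ yields a quasi-isomorphism between $\mathcal{F}$ and the bounded complex $[\pi P^{-2} \to \pi P^{-1} \to \pi P^0]$; since each $\pi P^k$ is a summand of a finite sum of $\mathcal{A}_{n_\ell}$ it lies in $\mathcal{S}$, whence $\mathcal{F} \in \mathcal{S}$. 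Standard tilting theory then produces the derived equivalence $D^b({\sf Coh}\,\mathbb{P}^{nc}(V)) \simeq D^b(\operatorname{Spe} V^{i*})$.

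The main obstacle will be the generation step: one must couple noetherianity with finite global dimension of $A$ to secure the three-term projective resolution, and verify that applying the quotient functor really yields a bounded complex in $D^b({\sf Coh}\,\mathbb{P}^{nc}(V))$ quasi-isomorphic to $\mathcal{F}$ (rather than only an equality in the Gabriel quotient). The remaining pieces, namely the endomorphism ring and the Ext-vanishing, reduce to bookkeeping with the formulas already established in this section.
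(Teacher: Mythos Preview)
Your argument is correct and close in spirit to the paper's, but the generation step is packaged differently. The paper invokes Keller's criterion \cite[Proposition~4.2]{keller}: it shows that the subcategory ${\sf add}\,\mathcal{T}_i$, once closed under kernels of surjections, contains $\mathcal{A}_j$ for all $j \geq i$ (via the Euler sequence (\ref{eqn.euler}) used only in the forward direction), notes that these already form a set of generators for the abelian category $\mathbb{P}^{nc}(V)$, and appeals to finite homological dimension to conclude. You instead work directly in the derived category, running the Euler sequence in \emph{both} directions to capture every $\mathcal{A}_j$, and then use the finite projective resolution in ${\sf Gr}\,A$ to place an arbitrary coherent sheaf in the thick subcategory. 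Your route is more self-contained and avoids the external citation; the paper's is shorter because Keller's result absorbs the passage from abelian generators to derived generation. The worry you flag about $\pi$ of a resolution being a genuine quasi-isomorphism is not an issue: $\pi$ is exact, so it carries resolutions to resolutions.
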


\begin{proof}
We let $\mathcal{T}=\mathcal{A}_{i} \oplus \mathcal{A}_{i+1}$.  We first note that by (\ref{eqn.linebundles1}), (\ref{eqn.linebundles2}) and (\ref{eqn.linebundles3}), we have
$$
\operatorname{Ext}^{1}(\mathcal{A}_{i}\oplus \mathcal{A}_{i+1}, \mathcal{A}_{i}\oplus \mathcal{A}_{i+1})=0.
$$
The computation of the endomorphism ring of $\mathcal{T}_i$ is immediate from (\ref{eqn.homs}). Let ${\sf add T}$ be the full subcategory of $\mathbb{P}^{nc}(V)$ consisting of direct summands of direct sums of $\mathcal{T}_i$ and $\mathcal{C}$ be the smallest full subcategory containing ${\sf add T}$ which is closed under kernels of surjections. By (\ref{eqn.euler}), we see that $\mathcal{C}$ includes all the $\mathcal{A}_j$ for $j \geq i$. These form a set of generators for $\mathbb{P}^{nc}(V)$.  Finally, $\mathbb{P}^{nc}(V)$ has finite homological dimension by Proposition \ref{prop.gldim2} so we are done by \cite[Proposition~4.2]{keller}.
\end{proof}

Our next result is a version of Serre vanishing and Serre finiteness. Recall that $e_mA$ is a left $K_m$-module so $\Ext^i_{\mathbb{P}^{nc}(V)}(\mathcal{A}_m,\cM)$ is a right vector space over $K_m$.

\begin{thm}  \label{thm.serrevanish}
For any coherent sheaf $\cM \in {\sf Coh} \mathbb{P}^{nc}(V)$, the vector space
$$
\Ext^i_{\mathbb{P}^{nc}(V)}(\mathcal{A}_m,\cM)
$$
is finite dimensional over $K_m$. Furthermore, if $i>0$ then $\Ext^i_{\mathbb{P}^{nc}(V)}(\mathcal{A}_m,\cM) = 0$ for $m \gg 0$.
\end{thm}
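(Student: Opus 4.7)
The plan is to reduce everything to direct sums of line bundles via a two-step line bundle approximation of $\cM$, and then invoke the explicit formulas (\ref{eqn.homs}), (\ref{eqn.linebundles3}), and the cohomological dimension bound (\ref{eqn.cd1}). Since $A$ is noetherian by Theorem~\ref{thm.Anoeth}, write $\cM = \pi M$ for a finitely generated graded right $A$-module $M$. A finite set of homogeneous generators produces a surjection $\bigoplus_{k=1}^{n} e_{j_k} A \twoheadrightarrow M$ whose kernel is again finitely generated. Applying $\pi$ and iterating once yields short exact sequences
\begin{equation*}
0 \to \mathcal{L} \to \mathcal{P}' \to \mathcal{K} \to 0, \qquad 0 \to \mathcal{K} \to \mathcal{P} \to \cM \to 0
\end{equation*}
in $\mathbb{P}^{nc}(V)$, with $\mathcal{P} = \bigoplus_k \mathcal{A}_{j_k}$ and $\mathcal{P}'$ likewise a finite direct sum of line bundles, and $\mathcal{K}, \mathcal{L}$ coherent.

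The second step is to verify the theorem for a single line bundle $\mathcal{A}_j$ in the second slot. By (\ref{eqn.homs}) together with Lemma~\ref{lemma.eulerses}(2), $\Hom_{\mathbb{P}^{nc}(V)}(\mathcal{A}_m, \mathcal{A}_j) \cong A_{jm}$ has right $K_m$-dimension $\max(m - j + 1, 0)$. Similarly, (\ref{eqn.linebundles3}) combined with Lemma~\ref{lemma.eulerses}(2) gives $\Ext^1_{\mathbb{P}^{nc}(V)}(\mathcal{A}_m, \mathcal{A}_j) \cong A^{*}_{m, j-2}$ of right $K_m$-dimension $\max(j - m - 1, 0)$. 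Both are finite, and crucially the second vanishes precisely once $m \geq j - 1$. Finite additivity extends these statements to $\mathcal{P}$ and $\mathcal{P}'$.

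Finally, I would feed these computations into the long exact $\Ext(\mathcal{A}_m, -)$-sequences of the two approximations, using that (\ref{eqn.cd1}) forces $\Ext^{i}(\mathcal{A}_m, -) = 0$ for all $i \geq 2$ and every sheaf. The sequence attached to the resolution of $\mathcal{K}$ shows that $\Ext^1(\mathcal{A}_m, \mathcal{K})$ is a quotient of the finite-dimensional $\Ext^1(\mathcal{A}_m, \mathcal{P}')$, hence is itself finite-dimensional. Substituting into the long exact sequence for $\cM$ then squeezes $\Hom(\mathcal{A}_m, \cM)$ between the finite-dimensional spaces $\Hom(\mathcal{A}_m, \mathcal{P})$ and $\Ext^1(\mathcal{A}_m, \mathcal{K})$, and exhibits $\Ext^1(\mathcal{A}_m, \cM)$ as a quotient of $\Ext^1(\mathcal{A}_m, \mathcal{P})$. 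Choosing $m \geq \max_k j_k - 1$ kills the latter, which proves the vanishing claim. I expect no serious obstacle: all the hard input — noetherianity, the line-bundle cohomology computation, and the cohomological-dimension bound on $\Ext(\mathcal{A}_m, -)$ — has already been established, and the argument is the standard two-step projective resolution pattern of Serre's theorem.
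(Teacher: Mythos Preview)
Your argument is correct and follows essentially the same route as the paper: verify the statement for line bundles using (\ref{eqn.homs}), (\ref{eqn.linebundles3}), (\ref{eqn.cd1}), then propagate to an arbitrary coherent sheaf via a resolution by direct sums of line bundles and the long exact sequence. The only cosmetic difference is that the paper invokes Proposition~\ref{prop.gldim2} (global dimension two) to obtain a full finite projective resolution of $M$, whereas you stop after two steps and instead use the cohomological dimension bound (\ref{eqn.cd1}) to kill the tail; these are equivalent for the purpose at hand.
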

\begin{proof}
For the convenience of the reader, we include the proof of this result, a more complicated version of which was proved in \cite[Theorem 3.5(2)]{finiteness}.
First note that the theorem holds for $\cM = \mathcal{A}_n$ by (\ref{eqn.cohom},\ref{eqn.linebundles3},\ref{eqn.cd1}). Since $A$ has global dimension 2 by Proposition~\ref{prop.gldim2}, an arbitrary coherent sheaf $\cM$ has a finite resolution by finite direct sums of line bundles $\mathcal{A}_n$. The result follows.
\end{proof}

Unlike the category of graded modules over a $\mathbb{Z}$-graded algebra, the category of graded modules over a $\mathbb{Z}$-indexed algebra $D$ does not have a natural shift functor. The analogue of the shift functor when it does exist is defined as follows.
Let $D(\delta)$ be the $\mathbb{Z}$-indexed algebra which is re-indexed so $D(\delta)_{ij} = D_{i+\delta,j+\delta}$. A {\it degree $\delta$-automorphism} of $D$ is an isomorphism $D \ra D(\delta)$.
Let $\phi:D \ra D(\delta)$ be a degree $\delta$-automorphism and $M$ be a right $D$-module.
\begin{defn}  \label{def.twist}
We define $M^{\phi}(\delta)$ to be the $D$-module whose degree $i$ component is $M^{\phi}(\delta)_i = M_{i+\delta}$ and whose multiplication map is
$$ M^{\phi}(\delta)_i \otimes D_{ij} \ra M^{\phi}(\delta)_j: m_{i+\delta} \otimes a_{ij} \mapsto m_{i+\delta} \phi (a_{ij}).$$
We call $M^{\phi}(\delta)$ the {\em twist of $M$ by $\phi$}.
\end{defn}
\noindent
Twisting is clearly an invertible functor, and it is straightforward to check that it induces an invertible functor on ${\sf Proj} D$.

We may finally relate the non-commutative symmetric algebra to Dlab and Ringel's {\em preprojective algebra} $\Pi(V)$ introduced in \cite{dlabringel2}. This is defined via generators and relations by
$$\Pi(V) = T(V \oplus V^*) /\mathcal{R}$$
and the ideal of relations $\mathcal{R}$ is generated by the images of the natural maps
$$
K_0 \ra V \otimes_{K_1} V^*
$$
and
$$
K_1 \ra V^* \otimes_{K_0} V
$$
defined in Section \ref{subsec.SncV}, where we have identified $V$ with $V^{**}$ using Lemma~\ref{lemma.ranktwo}. In the construction of the tensor algebra, we only allow alternating tensor products of $V$ and $V^*$ such as $V \otimes_{K_1} V^* \otimes_{K_0} V$ but not $V \otimes V$ which in general does not make sense. The preprojective algebra is often graded by putting $V, V^*$ in degree 1, though it will turn out below, that this is not the grading we want.

Before stating the next result, note that the isomorphism $V \ra V^{**}$ of Lemma~\ref{lemma.ranktwo} induces a degree two isomorphism $\phi: A \ra A(2)$.

\begin{proposition}  \label{prop.preprojective}
Let $\nu=(-)^{\phi}(2)$ be the twist by $\phi$ functor on $\mathbb{P}^{nc}(V)$ and $\mathcal{T}_{-1} = \mathcal{A}_0 \oplus \mathcal{A}_{-1}$. Then $\Pi(V)$ is isomorphic to the {\em orbit algebra}
$$\bigoplus_{n \geq 0}\Hom_{\mathbb{P}^{nc}(V)}(\mathcal{T}_{-1}, \nu^n\mathcal{T}_{-1}).$$
In particular we have ${\sf Proj}\, \mathbb{S}^{nc}(V) \cong {\sf Proj}\, \Pi(V)$.
\end{proposition}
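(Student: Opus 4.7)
The plan is to identify the orbit algebra with $\Pi(V)$ by folding $A$ under the degree-two automorphism $\phi: A \to A(2)$ underlying $\nu$.

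First I compute $\nu^n \mathcal{T}_{-1}$: by Definition \ref{def.twist}, $(e_i A)^\phi(2)$ has degree-$j$ component $A_{i, j+2}$, and $\phi^{-1}$ realises this as $e_{i-2} A$ as a right $A$-module, so $\nu \mathcal{A}_i = \mathcal{A}_{i-2}$ and $\nu^n \mathcal{T}_{-1} = \mathcal{A}_{-2n} \oplus \mathcal{A}_{-2n-1}$. Using (\ref{eqn.homs}), the orbit algebra $B$ is a $\{0,-1\}$-indexed algebra with
$$e_j B e_k \;=\; \bigoplus_{n \geq 0} A_{j - 2n,\, k}, \qquad j, k \in \{0,-1\},$$
whose multiplication is inherited from that of $A$ after the degree shifts supplied by $\phi$.

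Next, using the iterated double-dual isomorphisms of Lemma \ref{lemma.ranktwo}, I identify $V^{i*} \cong V$ for $i$ even and $V^{i*} \cong V^*$ for $i$ odd, producing embeddings $V \hookrightarrow A_{-2,-1} \subset e_0 B e_{-1}$ and $V^* \hookrightarrow A_{-1,0} \subset e_{-1} B e_0$. Matching idempotents $e_0 \leftrightarrow e_0$ and $e_1 \leftrightarrow e_{-1}$, these extend multiplicatively to an algebra homomorphism $T(V \oplus V^*) \to B$. Under the identifications, the generating relations of $\mathcal{R}$—the canonical maps $K_0 \to V \otimes_{K_1} V^*$ and $K_1 \to V^* \otimes_{K_0} V$—correspond to the defining images of $Q_{-2}$ and $Q_{-3}$ in $A$ respectively; since $\phi$ identifies each $Q_i$ with $Q_{i+2}$, these two relations capture all of $A$'s relations from the orbit perspective. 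Because each $Q_i$ vanishes in $A$, the map descends to a homomorphism $\Pi(V) \to B$.

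Finally, surjectivity is immediate since $B$ is generated over $e_0 \oplus e_{-1}$ by the fundamental pieces $A_{-2,-1}$ and $A_{-1,0}$, as every $A_{j - 2n, k}$ is a product of these (modulo $Q_i$'s) after $\phi$ is used to realign indices. Injectivity follows by a dimension count via the Euler sequence (Lemma \ref{lemma.eulerses}), which gives $\dim_{K_k} A_{j - 2n, k} = k - (j - 2n) + 1$, matching the dimensions of the corresponding pieces of $\Pi(V)$ computable from \cite{dlabringel2}. The ``in particular'' assertion ${\sf Proj}\, \mathbb{S}^{nc}(V) \cong {\sf Proj}\, \Pi(V)$ then follows from standard orbit-algebra arguments. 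The main obstacle I foresee is verifying coherence of the double-dual identifications across iterated tensor products: one must check that $\phi$ carries each $Q_i$ to $Q_{i+2}$ in a way compatible with multiplication, which amounts to a naturality statement about the canonical maps $K_i \to V^{i*} \otimes V^{(i+1)*}$ under iteration of the dual construction.
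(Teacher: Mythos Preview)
Your computation of $\nu^n\mathcal{T}_{-1}$ and the identification of the orbit algebra with $\Pi(V)$ via generators and relations is essentially what the paper does, only spelled out in more detail (the paper simply writes ``a simple computation using (\ref{eqn.homs})''). The indexing of the pieces $e_jBe_k$ deserves a second look, but the underlying idea is the same.

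The genuine gap is in the ``in particular'' clause. You write that ${\sf Proj}\,\mathbb{S}^{nc}(V)\cong{\sf Proj}\,\Pi(V)$ ``follows from standard orbit-algebra arguments,'' but this is not a formal consequence of the algebra isomorphism. The relevant result is \cite[Theorem~4.5]{az}, and its hypotheses require that the autoequivalence $\nu$ be \emph{ample} and that $\Hom_{\mathbb{P}^{nc}(V)}(\mathcal{T}_{-1},\cM)$ be finite over $K_0\times K_{-1}$ for every coherent $\cM$. The paper verifies both of these via Theorem~\ref{thm.serrevanish} (Serre vanishing and Serre finiteness). Without invoking ampleness---which here amounts to knowing that $\Ext^1(\mathcal{A}_m,\cM)$ eventually vanishes and that the $\mathcal{A}_m$ generate---the orbit-algebra construction does not automatically recover the original ${\sf Proj}$ category. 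You should cite \cite[Theorem~4.5]{az} and explain why its hypotheses hold, pointing to Theorem~\ref{thm.serrevanish}.

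A secondary point: your injectivity argument appeals to a dimension count from \cite{dlabringel2}. This is fine in principle, but be sure the dimensions there are established independently of the isomorphism you are proving; otherwise the argument is circular. An alternative is to argue directly that $\Pi(V)$ surjects onto the orbit algebra and that both have the same graded pieces by the Euler sequence (Lemma~\ref{lemma.eulerses}(2)), which you already invoke.
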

\begin{proof}
Note that $\nu(e_iA)$ is a projective module generated by a single element in degree $i-2$. Hence $\nu\mathcal{A}_i \cong \mathcal{A}_{i-2}$. A simple computation using (\ref{eqn.homs}) shows that the orbit algebra is isomorphic (as an ungraded algebra) to $\Pi(V)$. The equivalence between the {\sf Proj} categories (when $\Pi(V)$ is endowed with a grading via the aforementioned isomorphism) now follows from \cite[Theorem~4.5]{az}. Indeed, Theorem~\ref{thm.serrevanish} shows that $\nu$ is ample and furthermore, $\Hom_{\mathbb{P}^{nc}(V)}(\mathcal{T}_{-1},\cM)$ is a finite $K_0 \times K_{-1}$-bimodule for any coherent sheaf $\cM$.
\end{proof}
This proposition shows that in some sense, $\Pi(V)$ is a non-commutative 2-Veronese of $\mathbb{S}^{nc}(V)$ so it is easy to recover the former from the latter.

\section{$A$ is Auslander regular}  \label{sec.Ausreg}

In this section, we prove that $A = \mathbb{S}^{nc}(V)$ is Auslander regular in an appropriate sense. This allows us to show that $\mathbb{P}^{nc}(V)$ is hereditary, generalizing a familiar result for $\mathbb{P}^1$. This important result will in turn be used in later sections to prove a type of Serre duality, and allows us to identify ${\sf D}^b({\sf Coh }\mathbb{P}^{nc}(V))$ with the repetitive category.

Given a $\mathbb{Z}$-indexed algebra $D$, a right (respectively left) $D$-module $M$, and $D$-bimodule $B$, we define
$$\uExt^i_D(M,B) = \oplus_i \Ext^i(M, e_i B)$$
which we note is naturally a left (respectively right) $D$-module. We similarly define $\uExt^i_D(B,M)$ and if $B'$ is another $D$-bimodule then we define the $D-D$-bimodule $\uExt^i(B,B') = \oplus_{i,j} \uExt^i (e_j B, e_i B')$. We define the {\it (traditional) grade} of $M$ to be
$$ j(M):= \min \{ i | \uExt^i_D(M,D) \neq 0\} $$
(it is more common now to replace the bimodule $D$ above with the balanced dualizing complex which usually shifts the values of the grade). We say that $D$ is {\it Auslander-Gorenstein} if $D$ has finite injective dimension and for any noetherian module $M$ (left or right) and submodule $N < \uExt^i_D(M,D)$, we have $j(N) \geq i$. If further $\operatorname{gl.dim} D$ is finite, we say $D$ is {\it Auslander regular}.

We will need the following terminology in the proof of the next result.  We say an $A$-module $M$ is {\em $g$-torsion} if each element of $M$ is annihilated by $g^n = \{g_ig_{i+\delta}\ldots g_{i+(n-1)\delta}\}$ for $n$ large enough. We say $M$ is {\em $g$-torsion free} if right multiplication by $g$ is injective on $M$.

\begin{proposition}  \label{prop.BisAG}
$B = A/I$ is Auslander-Gorenstein of injective dimension 1.
\end{proposition}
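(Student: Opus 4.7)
My approach is to prove the two parts of the Auslander--Gorenstein condition separately: first bound the injective dimension of $B$ by $1$, then verify the Auslander condition in grade $1$ (the condition in grade $0$ is automatic).

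\textbf{Step 1: Injective dimension.}
The guiding intuition is that $B = A/I$ is a twisted ring on the field $F$ (simple case) or $K$ (non-simple case) by Theorem~\ref{thm.Anoeth}, so homologically it should behave like a skew polynomial ring in one variable over a field, which has injective dimension $1$. To make this precise, I would exploit the fact that $B$ is a noetherian domain: it agrees with the full twisted ring $C$ in large degrees, $C$ is a domain (every nonzero multiplication map is an isomorphism of $1$-dimensional spaces), and $B$ inherits this. Hence $B$ admits a ring of fractions $Q$ by Proposition~\ref{prop.ringquotients}, and I would construct a short exact sequence of $B$--$B$-bimodules
$$0 \to B \to Q \to Q/B \to 0$$
in which $Q$ is injective over $B$ (being a graded field---one-dimensional over $F$ on each side, with every multiplication map a twisted isomorphism) and $Q/B$ is injective because it is a direct limit of right bounded modules, which are injective in the subcategory of torsion $B$-modules by a standard Gabriel argument applied to the twisted ring structure of Section~\ref{section.twisted}. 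This yields $\operatorname{injdim} B \leq 1$, and since $B$ is not itself injective ($B \to Q$ has nonzero cokernel), $\operatorname{injdim} B = 1$.

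\textbf{Step 2: Auslander condition.}
With $\operatorname{injdim} B = 1$, the condition reduces to showing that for every noetherian right $B$-module $M$, every nonzero submodule $N \leq \uExt^1_B(M, B)$ satisfies $\Hom_B(N, B) = 0$, i.e., $j(N) \geq 1$. Since localization at the Ore set is exact, applying $- \otimes_B Q$ to a projective resolution of $M$ gives
$$\uExt^1_B(M, B) \otimes_B Q \;\cong\; \uExt^1_Q(M \otimes_B Q, Q) \;=\; 0$$
(using that $Q$ is a graded field, hence of global dimension $0$). Therefore $\uExt^1_B(M, B)$, and in particular its submodule $N$, is torsion in the sense of Section~\ref{sec.torsion}. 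Since $B$ is a torsion-free $B$-module (the localization map $B \to Q$ is injective), there are no nonzero homomorphisms from a torsion module into $B$, giving $\Hom_B(N, B) = 0$. The symmetric argument works for left modules.

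\textbf{Main obstacle.}
The key difficulty is verifying that the cokernel $Q/B$ is actually injective as a right $B$-module; equivalently, that bounded $B$-modules are injective as objects of the torsion subcategory. In the non-simple case this reduces to the twisted polynomial ring setting and is essentially classical, but in the simple case the mismatch between the $K_i$-structure on the diagonal of $B$ and the finer $F$-bimodule structure in positive degrees (per Lemma~\ref{lemma.AmodI}) demands a direct computation. I would handle this by working explicitly with the twisted multiplication formula \eqref{eqn.muC} to construct injective envelopes of the bounded simples and verify exactness of the resolution degree by degree.
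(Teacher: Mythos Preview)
Your Step~2 is correct and is exactly the paper's argument for the Auslander condition: localize at the ring of fractions $C$ of $B$, use that $C$ has global dimension~$0$ to see $C \otimes_B \uExt^1_B(M,B) = 0$, conclude that any $N \leq \uExt^1_B(M,B)$ is torsion, and hence $\underline{\Hom}_B(N,B) = 0$.

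Your Step~1, however, takes a genuinely different route and the gap you flag is real. The paper does \emph{not} attempt to build an injective resolution of $B$ from scratch. Instead it exploits the ambient algebra $A$: the Euler exact sequence (Lemma~\ref{lemma.eulerses}) gives $\operatorname{id}_A e_jA = 2$, so a minimal injective resolution $0 \to A \to J^0 \to J^1 \to J^2 \to 0$ exists; since $A$ is $g$-torsion free the envelope $J^0$ is as well, whence $\underline{\Hom}_A(A/I,J^0)=0$. Applying $\underline{\Hom}_A(A/I,-)$ then collapses the resolution to
\[
0 \longrightarrow \uExt^1_A(A/I,A) \longrightarrow \underline{\Hom}_A(A/I,J^1) \longrightarrow \underline{\Hom}_A(A/I,J^2) \longrightarrow 0,
\]
and the projective resolution $0 \to A \xrightarrow{g} A \to A/I \to 0$ identifies $\uExt^1_A(A/I,A)\cong A/I = B$. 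This is a clean Rees-type change-of-rings argument that uses only what has already been established about $A$.

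By contrast, your resolution $0 \to B \to Q \to Q/B \to 0$ requires proving directly that $Q/B$ is injective over $B$. Your justification---that $Q/B$ is a direct limit of right-bounded modules which are injective in the torsion subcategory---does not suffice: injectivity in the torsion localizing subcategory is not the same as injectivity in ${\sf Gr}\,B$, and in the simple case the mismatch $B_{ii} = K_i \subsetneq F = C_{ii}$ on the diagonal means $B$ has non-projective cyclic right ideals (e.g.\ $(e_lB)_{\geq l+1}$ is \emph{not} isomorphic to any $e_mB$), so the Baer-criterion verification is genuinely delicate. Your proposed degree-by-degree computation may well succeed, but it is substantial work you have not carried out, whereas the paper's approach sidesteps the issue entirely by borrowing the injective resolution from $A$.
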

\begin{proof}
First note that the right hand version of the Euler exact sequence (Lemma \ref{lemma.eulerses}) shows that $\operatorname{id}_A e_jA = 2$. The standard graded proof that $\operatorname{id} B = 1$ holds as follows. Let $\delta = \deg g_i$. Note that left multiplication by $g_i$ induces the following exact sequence
$$ 0 \ra e_{i+\delta} A \xrightarrow{g_i} e_i A \ra e_i B \ra 0 .$$
Taking the direct sum over all $i$ gives the exact sequence
\begin{equation} \label{eqn.AmodIres}
 0 \ra A \xrightarrow{g}  A \ra  A/I \ra 0 .
\end{equation}
Consider a minimal injective resolution of right $A$-modules
$$ 0 \ra A \ra J^0 \ra J^1 \ra J^2 \ra 0 .$$
Now $A$ is $g$-torsion free so minimality ensures that $J^0$ must be $g$-torsion free too and hence  $\underline{\Hom}_A(A/I,J^0) = 0$. Adjointness of $\underline{\Hom}, \uotimes$ ensures that the $\underline{\Hom}_A(A/I,J^j)$ are injective $A/I$-modules so applying $\underline{\Hom}_A(A/I,-)$ to the injective resolution for $A$ gives the $A/I$-injective resolution
$$ 0 \ra \uExt^1_A(A/I,A) \ra \underline{\Hom}_A(A/I,J^1) \ra \underline{\Hom}_A(A/I,J^2) \ra 0 .$$
Now from the resolution (\ref{eqn.AmodIres}) for $A/I$ we see that $\uExt^1_A(A/I,A) \simeq A/I$. We thus have a length one $B$-injective resolution of $B$.

To check the Auslander condition for $B$, we use its ring of fractions $C$ as decribed in Section~\ref{section.twisted}. Now, the global dimension of $C$ is zero by Proposition~\ref{prop.structureQ}(2). Since $B$ has injective dimension 1, we need only check that for any noetherian $B$-module $M$ and submodule $N$ of $\uExt^1_B(M,B)$ we have $j(N) \neq 0$. For definiteness, we work with a right module $M$. Now localization is exact so $C \uotimes_B  \uExt^1_B(M,B) = \uExt^1_C(M \uotimes_B C , C) = 0$. Hence $C \uotimes_B N = 0$ so $N$ is torsion and we must have $\underline{\Hom}_B(N,B) = 0$. Thus $j(N) >0$ and the proposition is proved.
\end{proof}

Before proving that $A$ is Auslander regular, we need to recover some of the general theory of Auslander-Gorenstein rings in the $\mathbb{Z}$-indexed case.  It turns out that the proofs as written in \cite{yz} generalize better than the old original papers. For example, Levasseur in \cite{levasseur} sometimes passes to the ungraded algebra which makes no sense in the indexed setting. Hence we follow the treatment in \cite{yz} closely.

Suppose $D$ is an Auslander-Gorenstein algebra of injective dimension $d$. We define the {\it canonical dimension} of a $D$-module $M$ to be $\Cdim M = \Cdim_D M= d-j(M)$. If we always have $\Cdim M = \dim M$ then we will say that $D$ is {\em Macaulay}.
Let ${\sf D}_{fg}(D)$ denote the derived category of complexes of $D$-modules with finitely generated cohomologies. Then as in \cite[Proposition~1.3]{yz}, $\mathbf{R}\underline{\operatorname{Hom}}_D(-,D)$ induces a duality between ${\sf D}_{fg}(D)$ and ${\sf D}_{fg}(D^{op})$.
As already remarked in Section~\ref{sec.torsion}, this can be computed by replacing $D$ with a finite complex of bimodules which is simultaneously left and right injective. Hence we still have the usual double-Ext spectral sequence \cite[Proposition~1.7]{yz} in this setting so \cite[Lemma~2.11]{yz} holds to give the following exactness result for $\Cdim$.

\begin{proposition}  \label{prop.cdimexact}
Given a short exact sequence $0 \ra M' \ra M \ra M'' \ra 0$ of noetherian $D$-modules we have
$$ \Cdim M = \max \{ \Cdim M', \Cdim M'' \} .$$
\end{proposition}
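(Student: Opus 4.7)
The proposition is equivalent, via the identity $\Cdim = d - j$, to the assertion $j(M) = \min\{j(M'), j(M'')\}$. My plan is to extract this from the long exact sequence
\begin{equation*}
\cdots \to \uExt^{i-1}_D(M', D) \to \uExt^i_D(M'', D) \to \uExt^i_D(M, D) \to \uExt^i_D(M', D) \to \uExt^{i+1}_D(M'', D) \to \cdots
\end{equation*}
obtained by applying $\uExt^*_D(-, D)$ to the given sequence; this is available in the $\mathbb{Z}$-indexed setting because, as noted just before the statement, $\mathbf{R}\underline{\operatorname{Hom}}_D(-, D)$ may be computed from a bimodule resolution. The inequality $j(M) \geq \min\{j(M'), j(M'')\}$ is immediate: for $i$ strictly below this minimum, $\uExt^i_D(M, D)$ sits between two vanishing terms.

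For the reverse inequality I would split on how $j(M')$ and $j(M'')$ compare. If $j(M'') \leq j(M')$, then setting $k = j(M'')$ the long exact sequence exhibits $\uExt^k_D(M'', D)$ as a non-zero subobject of $\uExt^k_D(M, D)$ (using $\uExt^{k-1}_D(M', D) = 0$ in both subcases $k < j(M')$ and $k = j(M')$), yielding $j(M) \leq k$ with no further input. The non-trivial case is $j(M') < j(M'')$: here, setting $k = j(M')$ and supposing for contradiction that $\uExt^k_D(M, D) = 0$, the long exact sequence forces an embedding $\uExt^k_D(M', D) \hookrightarrow \uExt^{k+1}_D(M'', D)$. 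The Auslander condition applied to the target then gives $j(\uExt^k_D(M', D)) \geq k+1$.

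The contradiction, and the main obstacle of the proof, is the biduality identity $j(\uExt^{j(N)}_D(N, D)) = j(N)$ applied to $N = M'$. This is the one place where the full Auslander-Gorenstein structure, rather than merely the one-sided Auslander inequality, is essential. I would establish it by running the double-Ext spectral sequence $\uExt^p_D(\uExt^q_D(N, D), D) \Rightarrow N$ of \cite[Proposition~1.7]{yz}, which is available here for the same bimodule-resolution reason as above; the Auslander condition kills every other entry on the appropriate anti-diagonal of total degree zero, forcing the corner term $E_2^{-j(N), j(N)}$ to survive and therefore bounding $j(\uExt^{j(N)}_D(N, D))$ from above by $j(N)$. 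Combined with the opposite inequality coming directly from the Auslander condition, this yields the required equality and completes the proof.
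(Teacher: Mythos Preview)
Your overall strategy is exactly the one behind \cite[Lemma~2.11]{yz}, which is what the paper invokes; so the approach matches. The long-exact-sequence reductions and the case $j(M'') \leq j(M')$ are clean.

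The spectral-sequence justification of the biduality identity $j\bigl(\uExt^{j(N)}_D(N,D)\bigr) = j(N)$ needs repair, though. With the standard indexing $E_2^{p,q} = \uExt^p_D(\uExt^{-q}_D(N,D),D) \Rightarrow N$ (abutment in total degree $0$), the relevant corner is $E_2^{j(N),-j(N)}$, and the Auslander condition does \emph{not} kill the other entries on the anti-diagonal $p+q=0$: those are $\uExt^p_D(\uExt^p_D(N,D),D)$ for $p>j(N)$ and are typically non-zero. What the Auslander condition actually buys is that all terms of total degree $<0$ vanish, so each $E_\infty^{p,-p}$ is a \emph{submodule} of $E_2^{p,-p}$ and hence has grade $\geq p$. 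Feeding this and your easy inequality into a descending induction on the filtration $\{F^pN\}$ of $N$ gives $j(F^pN)\geq p$ for every $p$. If the corner $E_\infty^{j(N),-j(N)} = N/F^{j(N)+1}N$ were zero, one would obtain $j(N)=j(F^{j(N)+1}N)\geq j(N)+1$, a contradiction; thus $E_2^{j(N),-j(N)}\neq 0$ and the upper bound on $j\bigl(\uExt^{j(N)}_D(N,D)\bigr)$ follows. With this correction in place your argument goes through and coincides with the paper's reference to \cite{yz}.
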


One may now repeat the proofs in \cite[Proposition~2.14-Corollary~2.17]{yz} and \cite[Lemma~1.1-Corollary~1.3]{asz} to obtain the following results. Below we let $\operatorname{Kdim}$ denote the {\em Krull dimension} as defined in standard ring theory texts such as \cite[Chapter~6]{mcr}.

\begin{thm}  \label{thm.onCdim}
Let $M$ be a noetherian $D$-module.
\begin{enumerate}
\item $\operatorname{Kdim} M \leq \Cdim M$.
\item $M$ has a $\Cdim$-critical composition series.
\end{enumerate}
\end{thm}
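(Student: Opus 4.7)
The plan is to adapt the arguments of \cite[Propositions~2.14--2.17]{yz} and \cite[Lemmas~1.1--1.3]{asz} to the $\mathbb{Z}$-indexed setting. These arguments are formal consequences of the exactness of $\Cdim$ (Proposition~\ref{prop.cdimexact}), the double-$\uExt$ spectral sequence already set up in the preceding paragraph, and the fact that $\Cdim$ takes values in the finite set $\{-\infty, 0, 1, \ldots, d\}$; none of these inputs relies on features specific to ordinary graded or ungraded algebras, so the proofs should transpose with only notational changes.

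For part (2), I would argue by noetherian induction on submodules of $M$. Let $M$ be a nonzero noetherian $D$-module with $c = \Cdim M$, and consider
\[
\mathcal{S} = \{\, N \leq M : \Cdim(M/N) = c \,\}.
\]
Since $0 \in \mathcal{S}$ the set is nonempty, and by the ascending chain condition it has a maximal element $M'$, which is necessarily a proper submodule of $M$. For any $M' \subsetneq N \leq M$, maximality forces $\Cdim(M/N) < c$, so $M/M'$ is $\Cdim$-critical. Applying the induction hypothesis to $M'$ yields a $\Cdim$-critical composition series for $M'$, and prepending the step $M' \subset M$ produces one for $M$.

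For part (1), I would induct on $\Cdim M$, and by part~(2) together with the exactness of both $\Cdim$ and $\operatorname{Kdim}$ under short exact sequences, it suffices to treat $\Cdim$-critical $M$. If $c = \Cdim M > 0$, then every proper nonzero quotient $M/N$ satisfies $\Cdim(M/N) < c$. For an infinite descending chain $M \supsetneq M_1 \supsetneq M_2 \supsetneq \cdots$, each $M_{i+1}$ is a nonzero proper submodule, and the factor $M_i/M_{i+1}$ embeds in $M/M_{i+1}$, so $\Cdim(M_i/M_{i+1}) < c$. By induction $\operatorname{Kdim}(M_i/M_{i+1}) < c$, whence $\operatorname{Kdim} M \leq c$.

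The main obstacle is the base case $c = 0$, where one must show that $\Cdim M = 0$ forces $M$ to be of finite length. Following \cite[Proposition~2.14]{yz}, the standard approach is to use the biduality spectral sequence to realize $\uExt^d_D(-,D)$ as an exact contravariant duality on the full subcategory of noetherian modules with $\Cdim = 0$, interchanging left and right $D$-modules. Since $N := \uExt^d_D(M,D)$ is noetherian, any strictly descending chain of submodules in $M$ would correspond under this duality to a strictly ascending chain in $N$, contradicting noetherianity. Hence $M$ is artinian, completing the induction.
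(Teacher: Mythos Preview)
Your proposal is correct and takes essentially the same approach as the paper: the paper's entire proof is the sentence ``One may now repeat the proofs in \cite[Proposition~2.14--Corollary~2.17]{yz} and \cite[Lemma~1.1--Corollary~1.3]{asz},'' and you have simply written out those adapted arguments explicitly, using the same ingredients (exactness of $\Cdim$, the double-$\uExt$ spectral sequence, and noetherianity on both sides) that the paper has already set up.
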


\begin{corollary}  \label{cor.AmodImacaulay}
$A/I$ is Macaulay.
\end{corollary}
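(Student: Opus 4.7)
The plan is to show that $\dim M = \Cdim M$ for every nonzero noetherian $B$-module $M$, where $B = A/I$. I begin by observing that both quantities are very constrained. Since $\operatorname{id}_B B = 1$ by Proposition~\ref{prop.BisAG}, the Auslander-Gorenstein condition forces $j(M)\in\{0,1\}$ and hence $\Cdim M \in \{0,1\}$. On the other side, Lemma~\ref{lemma.AmodI} (and its non-simple analogue in Section~\ref{section.easytwist}) shows that $f_{e_iB}(n)$ is eventually the constant $1$, so $\dim e_iB = 1$; combined with monotonicity of $\dim$ under quotients and Proposition~\ref{prop.cdimexact}-style exactness, this gives $\dim M \in \{0,1\}$ for noetherian $M$. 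So the claim reduces to the single biconditional $\dim M = 0 \Leftrightarrow \Cdim M = 0$.

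The next step is a translation lemma: for noetherian $B$-modules, the following are equivalent: (i) $\dim M = 0$, (ii) $M$ is right-bounded, (iii) $M$ is torsion with respect to the ring of fractions $C$ of $B$ described in Section~\ref{section.twisted}. Equivalence (i)$\Leftrightarrow$(ii) is immediate from the definition, and (ii)$\Rightarrow$(iii) follows because localization at $C$ is governed by the eventual behavior of $M$ in high degree. The nontrivial direction is (iii)$\Rightarrow$(ii): given a generator $m \in M_k$ annihilated by some nonzero $b \in B_{kl}$, the fact that $B_{kl}$ is one-dimensional over $F$ (respectively $K$) forces $m\cdot B_{kl} = 0$; then the surjectivity of multiplication $B_{kl}\otimes B_{lp} \to B_{kp}$ onto the one-dimensional $B_{kp}$ for all $p \geq l$ propagates this to $m \cdot B_{\geq l} = 0$, making the submodule generated by $m$ right-bounded. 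Finite generation finishes (ii).

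With this in hand I would handle the two directions directly. If $\dim M = 0$, then $M$ is right-bounded, so any $\phi: M \to e_iB$ has image that is simultaneously a quotient of a torsion module and a submodule of the torsion-free module $e_iB$ (torsion-free because the localization map $e_iB \to e_iC$ is injective). Hence $\phi = 0$ for every $i$, so $\underline{\Hom}_B(M,B) = 0$, $j(M) = 1$, and $\Cdim M = 0$. Conversely, if $\dim M = 1$, then $M/\sigma M$ is a nonzero noetherian torsion-free $B$-module, and I would invoke Proposition~\ref{prop.structuretorfree} to embed $M/\sigma M \hookrightarrow (e_iB)^{\oplus n}$ for some $i,n$. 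Composing $M \twoheadrightarrow M/\sigma M$ with this embedding and a coordinate projection to a nonzero summand yields a nonzero morphism $M \to e_iB$, proving $\underline{\Hom}_B(M,B)\neq 0$ and $\Cdim M = 1$.

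The main obstacle is the application of Proposition~\ref{prop.structuretorfree} to $B$, which requires checking that $B$ is a domain with a ring of fractions and that each $e_iB$ (and $Be_i$) is uniform. The domain-with-ring-of-fractions property was essentially recorded in Section~\ref{section.twisted}. Uniformity of $e_iB$ comes from one-dimensionality: any two nonzero submodules contain cyclic submodules $mB$ and $m'B$ whose degree-$p$ components fill the one-dimensional $B_{ip}$ for all $p$ sufficiently large, so they agree in such degrees. Once this is verified, the three steps above assemble into the Macaulay property for $A/I$.
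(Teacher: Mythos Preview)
Your argument is correct, modulo one small imprecision in the translation lemma: in the simple case the annihilator of $m$ in $B_{kl}$ is only a right $K_l$-subspace, not an $F$-subspace, so the claim $m\cdot B_{kl}=0$ does not follow from a single nonzero annihilating element. This is harmless, since left multiplication by any nonzero $b\in B_{kl}$ is a bijection $B_{lp}\to B_{kp}$ for $p>l$ in the twisted ring, giving $m\cdot B_{kp}=m\cdot b\cdot B_{lp}=0$ directly; right-boundedness follows.

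Your route genuinely differs from the paper's in one of the two implications. For $\Cdim M=0\Rightarrow\dim M=0$ the paper simply invokes the general bound $\operatorname{Kdim}\leq\Cdim$ from Theorem~\ref{thm.onCdim}(1), so $M$ has finite length and hence $\dim M=0$. You prove the contrapositive by hand: when $\dim M=1$ you pass to the torsion-free quotient and apply Proposition~\ref{prop.structuretorfree} to produce a nonzero map into some $e_iB$. Your approach is more elementary, avoiding the double-Ext spectral-sequence machinery behind Theorem~\ref{thm.onCdim}, at the cost of verifying that $B$ is a domain with uniform $e_iB$ (which you correctly sketch). The paper's approach is shorter precisely because that machinery has already been developed for later use. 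The other implication ($\dim M=0\Rightarrow\Cdim M=0$, equivalently the paper's $\Cdim M=1\Rightarrow\dim M=1$) is essentially the same argument in both treatments: a nonzero map $M\to e_iB$ forces $\dim M\geq 1$ because $e_iB$ is $1$-pure, which in this setting is exactly your torsion-freeness statement.
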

\begin{proof}
Let $M$ be a noetherian $A/I$-module. If $\Cdim M = 0$ then Theorem~\ref{thm.onCdim}(1) shows that $M$ has finite length so $\dim M = 0$. If on the other hand $\Cdim M =1$ then $\underline{\Hom}_{A/I}(M,A/I) \neq 0$ so M has a non-zero homomorphic image in some $e_i A/I$. But from the twisted ring structure of $A/I$ (Theorem~\ref{thm.Anoeth}), we see that $e_i A/I$ is 1-pure so $\dim M =1$.
\end{proof}

We have not yet studied the notion of balanced dualizing complexes for even in the graded or local case, one usually assumes that $\bar{D}$ is finite dimensional over some central base field. Nevertheless, some of the results of \cite[Section~4]{yz} hold.

\begin{proposition}  \label{prop.AmodIn}
Let $n \in \mathbb{N}$.
\begin{enumerate}
\item For any noetherian $A/I^n$-module $M$, we have $\Cdim_{A/I^n} M = \Cdim_A M$.
\item $A/I^n$ is Auslander Gorenstein.
\item $A/I^n$ is Macaulay.
\end{enumerate}
\end{proposition}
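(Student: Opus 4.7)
The strategy is to deduce everything from the Auslander regularity of $A$ (established earlier in this section) together with the two-term $A$-projective resolution of $A/I^n$. Since each $g_i$ is a non-zero-divisor by Proposition~\ref{prop:gisnzd} and $g$ is a normal family, iterating (\ref{eqn.AmodIres}) shows that multiplication by $g^n$ gives an exact sequence
$$ 0 \to A \xrightarrow{g^n} A \to A/I^n \to 0 $$
(up to a degree shift on the first term). Applying $\underline{\operatorname{Hom}}_A(A/I^n,-)$ to this resolution produces $\uExt^0_A(A/I^n,A)=0$, $\uExt^1_A(A/I^n,A)\cong A/I^n$, and $\uExt^i_A(A/I^n,A)=0$ for $i\geq 2$; in particular $A/I^n$ has projective dimension one over $A$ on each side.

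Fix a minimal injective resolution $0\to A\to J^0\to J^1\to J^2\to 0$ in ${\sf Gr}\,A$. I claim $J^0$ is $g$-torsion free: if some homogeneous $0\neq x\in J^0_j$ satisfied $xg_j=0$, essentiality of $A\hookrightarrow J^0$ would provide $a\in A_{j\ell}$ with $0\neq xa\in A$; then $(xa)g_\ell=x(ag_\ell)=x(g_ja')=0$ for some $a'$ by normality, contradicting injectivity of right multiplication by $g_\ell$ on $A$. Consequently $\underline{\operatorname{Hom}}_A(A/I^n,J^0)=0$, and since each $\underline{\operatorname{Hom}}_A(A/I^n,J^i)$ is injective in ${\sf Gr}\,A/I^n$ by the standard adjunction, the complex $\underline{\operatorname{Hom}}_A(A/I^n,J^\bullet)$ assembles into an $A/I^n$-injective resolution
$$ 0\to A/I^n\to \underline{\operatorname{Hom}}_A(A/I^n,J^1)\to \underline{\operatorname{Hom}}_A(A/I^n,J^2)\to 0 $$
of length one, bounding $\operatorname{id} A/I^n\leq 1$.

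For any $A/I^n$-module $M$ the same torsion argument gives $\underline{\operatorname{Hom}}_A(M,J^0)=0$, so tensor-hom adjunction $\underline{\operatorname{Hom}}_{A/I^n}(M,\underline{\operatorname{Hom}}_A(A/I^n,J^i))\cong\underline{\operatorname{Hom}}_A(M,J^i)$ yields a natural isomorphism
$$ \uExt^p_{A/I^n}(M,A/I^n)\cong \uExt^{p+1}_A(M,A). $$
Part~(1) follows immediately, since $\Cdim_{A/I^n}M = 1-j_{A/I^n}(M) = 2-j_A(M) = \Cdim_A M$; the left-module case is handled symmetrically with the left minimal injective resolution of $A$. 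Part~(2) reduces similarly: any submodule $N\hookrightarrow\uExt^i_{A/I^n}(M,A/I^n)\cong\uExt^{i+1}_A(M,A)$ has $j_A(N)\geq i+1$ by the Auslander condition on $A$, whence $j_{A/I^n}(N)\geq i$.

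For part~(3), I would filter a noetherian $A/I^n$-module $M$ by $I^kM$ for $k=0,\dots,n$. Each subquotient $I^kM/I^{k+1}M$ is a noetherian $A/I$-module, and by Proposition~\ref{prop.cdimexact} together with exactness of $\dim$, both $\Cdim_{A/I^n}M$ and $\dim M$ equal the maximum of the corresponding invariants over these subquotients. On each subquotient, part~(1) applied to both $A/I^n$ and $A/I$ identifies $\Cdim_{A/I^n}$ with $\Cdim_{A/I}$, which in turn coincides with $\dim$ by Corollary~\ref{cor.AmodImacaulay}. The main obstacle is verifying the $g$-torsion freeness of $J^0$; once that torsion-theoretic input is secured, the remaining homological content of the proposition is a standard exercise in change of rings, requiring no spectral sequences.
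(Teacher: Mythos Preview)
Your argument for part~(2) is circular. You invoke the Auslander condition on $A$ to conclude that any submodule $N\hookrightarrow\uExt^{i+1}_A(M,A)$ has $j_A(N)\geq i+1$, but the Auslander regularity of $A$ is Theorem~\ref{thm.AisAG}, which is proved \emph{after} this proposition and whose proof explicitly cites Proposition~\ref{prop.AmodIn} to handle the $g$-torsion case. So at this point in the paper you only know that $A/I$ is Auslander--Gorenstein (Proposition~\ref{prop.BisAG}), not that $A$ is. The paper avoids this by arguing inductively on $n$: the base case $n=1$ is Proposition~\ref{prop.BisAG}, and the inductive step feeds the exact sequence $0\to MI\to M\to M/MI\to 0$ (whose outer terms are $A/I^{n-1}$-modules) through the long exact sequence in $\uExt_{A/I^n}(-,A/I^n)$, using part~(1) to transfer $j$-numbers between $A/I^{n-1}$ and $A/I^n$.

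Your treatment of part~(1) is fine and amounts to the same identity (\ref{eqn.doubleextcollapse}) the paper obtains; you have simply unwound the collapsed change-of-rings spectral sequence by hand using the length-two injective resolution of $A$ and the observation $\underline{\Hom}_A(M,J^0)=0$. The essentiality argument you give for $g$-torsion freeness of $J^0$ is correct and equivalent to the paper's appeal to minimality. Your part~(3) via the $I$-adic filtration is also correct and essentially what the paper does with the single-step sequence $0\to MI\to M\to M/MI\to 0$ and induction. The only genuine gap is the circular dependence in~(2); replacing that step with the paper's induction on $n$ repairs the proof.
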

\begin{proof}
We follow the treatment of \cite[Corollary~4.15, Proposition~4.16]{yz}. To prove (1), note first that $A/I^n$ has injective dimension 1 as a module over itself by the argument in the proof of Proposition \ref{prop.BisAG}. It thus suffices to show that $j_{A/I^n} M  = j_A M - 1$. Consider the change of rings spectral sequence
$$ \uExt^p_{A/I^n}(-, \uExt^q_A(A/I^n,A)) \Longrightarrow \uExt^{p+q}_A(-,A).$$
Now $I^n$ is generated by the normal family $g^n = \{g_ig_{i+\delta}\ldots g_{i+(n-1)\delta}\}$ so we may compute $\uExt^q_A(A/I^n,A)$ and see the spectral sequence collapses to
\begin{equation} \label{eqn.doubleextcollapse}
\uExt^p_{A/I^n}(-,A/I^n) \simeq \uExt^{p+1}_A(-,A).
\end{equation}
Part (1) follows.

To prove part (2), we use induction on $n$, the case $n=1$ being Proposition~\ref{prop.BisAG}. Suppose $A/I^{n-1}$ is Auslander-Gorenstein and let $M$ be a noetherian $A/I^n$-module. Consider the exact sequence
\begin{equation} \label{eqn.torsionSES}
0 \ra MI \ra M \ra M/MI \ra 0 .
\end{equation}
Note that $MI,M/MI$ are $A/I^{n-1}$-modules and by part (1), their $j$ numbers as $A/I^n$-modules and $A/I^{n-1}$-modules are the same. Consider the long exact sequence
$$ \ldots \ra \uExt^p_{A/I^n}(M/MI, A/I^n) \ra \uExt^p_{A/I^n}(M, A/I^n) \ra \uExt^p_{A/I^n}(MI, A/I^n) \ra \ldots .$$
This shows that any submodule $N$ of $\uExt^p_{A/I^n}(M, A/I^n)$ sits in a short exact sequence of the form
$$ 0 \ra N' \ra N \ra N'' \ra 0$$
where $N', N''$ are subquotients of $\uExt^p_{A/I^n}(M/MI, A/I^n),\uExt^p_{A/I^n}(MI, A/I^n)$ respectively so have $j$ number $\geq p$ by induction. It follows that $j(N) \geq p$ too and we are done.

(3) Now follows from induction and exactness of $\dim$ and $\Cdim$ applied to (\ref{eqn.torsionSES}).
\end{proof}

If $D$ is a connected graded noetherian algebra with a normal element $g$ of positive degree, then it is a classical result of Levasseur \cite{levasseur}, that $D$ is Auslander-Gorenstein if and only if $D/(g)$ is. To show $A$ is Auslander regular, we need a similar result obtained by mimicking the proof in \cite[Section~5]{yz}.

\begin{theorem}  \label{thm.AisAG}
 $A$ is Auslander regular and Macaulay.
\end{theorem}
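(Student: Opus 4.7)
My plan is to follow the strategy of \cite[Section 5]{yz}, reducing the Auslander and Macaulay conditions for $A$ to those already established for $A/I^n$ in Proposition~\ref{prop.AmodIn}. Auslander regularity then comes for free from Proposition~\ref{prop.gldim2}. It remains to check Auslander-Gorenstein and Macaulay. First I would verify that $\operatorname{id}_A A = 2$: the upper bound follows from $\operatorname{gl.dim} A = 2$, and exactly the argument of Proposition~\ref{prop.BisAG} applied to the resolution (\ref{eqn.AmodIres}) for $A/I$ shows $\operatorname{id}_A A \geq 2$.

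The key technical input is the change of rings isomorphism (\ref{eqn.doubleextcollapse}), generalized to $A/I^n$, which gives $\uExt^p_{A/I^n}(X,A/I^n) \cong \uExt^{p+1}_A(X,A)$ for any $A/I^n$-module $X$. This has two immediate consequences: firstly, for $A/I^n$-modules $X$ the Auslander condition for $A/I^n$ translates into $j_A(N) \geq p+1$ for any $A/I^n$-submodule $N$ of $\uExt^p_{A/I^n}(X,A/I^n)$; secondly, $\Cdim_A X = \Cdim_{A/I^n} X$, so combined with Proposition~\ref{prop.AmodIn}(3) we obtain $\Cdim_A X = \dim X$ for all such $X$.

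To pass to a general noetherian $A$-module $M$, I would decompose using the $g$-torsion submodule $tM = \{m \in M : m \cdot I^n = 0 \text{ for some } n\}$, which is noetherian and hence annihilated by some power of $I$, giving $tM \in {\sf Gr}\,A/I^n$ and $M/tM$ a $g$-torsion free $A$-module. For the $g$-torsion free case, I would use the short exact sequence $0 \to M/tM \xrightarrow{g} (M/tM)^{\phi}(\delta) \to (M/tM)/((M/tM)g) \to 0$, whose cokernel is an $A/I$-module. The resulting long exact sequence for $\uExt^*_A(-,A)$ lets one control submodules of $\uExt^p_A(M/tM,A)$ by filtering them through the image and kernel of multiplication by $g$ and applying the $A/I$-case to each graded piece; this is the Nakayama-style argument in \cite[Propositions~5.7-5.8]{yz}. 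Splicing with the Auslander inequality for $tM$ via the sequence $0 \to tM \to M \to M/tM \to 0$ and the long exact sequence for $\uExt^*_A(-,A)$ yields the Auslander condition for $M$. The same decomposition, together with the Hilbert function identity $\dim M = \dim(M/Mg) + 1$ on the $g$-torsion free part and Proposition~\ref{prop.cdimexact} on the torsion decomposition, gives the Macaulay property.

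The main obstacle, as usual in this setup, is the bookkeeping in the $g$-torsion free case: a submodule $N \subseteq \uExt^p_A(M,A)$ does not in general come from a submodule of $\uExt^p_A(M/Mg,A)$, so one must filter $N$ according to the $g$-action on the bimodule $\uExt^p_A(M,A)$ and apply the reduction to $A/I$ stratum by stratum. Once this is carried out, everything else is a formal consequence of the spectral sequence (\ref{eqn.doubleextcollapse}), exactness of $\dim$ and $\Cdim$, and the results already in hand for $A/I^n$.
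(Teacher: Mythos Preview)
Your proposal is correct and follows essentially the same approach as the paper: split into the $g$-torsion and $g$-torsion free cases, handle the former via Proposition~\ref{prop.AmodIn} and the change-of-rings isomorphism (\ref{eqn.doubleextcollapse}), and handle the latter via the multiplication-by-$g$ exact sequence $0 \ra M^{\phi^{-1}}(-\delta) \xrightarrow{g} M \ra M/Mg \ra 0$ (your twisted version is equivalent), all following \cite[Theorem~5.1]{yz}. The paper's proof is terser than yours---it simply defers the bookkeeping in the $g$-torsion free case to \cite{yz}---but the strategy is identical.
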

\begin{proof} The proof in \cite[Theorem~5.1]{yz} can be carried over to our setting and we only indicate how to adapt their graded proof to our indexed setting. The proof essentially divides up into two cases, when a noetherian $A$-module  $M$ is $g$-torsion, and when it is $g$-torsion free. The first case is disposed of using Corollary~\ref{cor.AmodImacaulay} and Proposition~\ref{prop.AmodIn}.

To dispose of the $g$-torsion free case we  need to introduce some new notation. Since $g=\{g_i\}$ is a normal family of degree $\delta$ with all $g_i$ non-zero-divisors, {\em conjugation} by the $g_i$ is a degree $\delta$-automorphism of $A$. Explicitly, this conjugation map is
$$\phi: D_{ij} \ra D_{i+\delta,j+\delta}: a_{ij} \mapsto g_i^{-1} a_{ij} g_j.$$
For any right $D$-module $M$, the multiplication by $g_i$ maps $g_i: M_i \ra M_{i+\delta}$ assemble to give a $D$-module morphism
\begin{equation} \label{eqn.gmult}
M^{\phi^{-1}}(-\delta) \xrightarrow{g} M 
\end{equation}
which is injective if $M$ is $g$-torsion-free. Analyzing the exact sequence
$$0 \ra M^{\phi^{-1}}(-\delta) \xrightarrow{g} M  \ra M / M (g_i) \ra 0$$
as in \cite[Theorem~5.1]{yz} completes the proof.
\end{proof}

\begin{corollary}  \label{cor.projAhereditary}
$\mathbb{P}^{nc}(V)$ is hereditary.
\end{corollary}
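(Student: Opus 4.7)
We aim to show $\Ext^k_{\mathbb{P}^{nc}(V)}(\cM, \cN) = 0$ for all coherent $\cM, \cN$ and all $k \geq 2$. Writing $\cM = \pi M$ and $\cN = \pi N$ for noetherian $M, N \in {\sf Gr}\,A$, the natural tool is the Grothendieck spectral sequence attached to the adjunction $\pi \dashv \omega$ (which applies since $\pi$ is exact, so $\omega$ preserves injectives):
\[
E_2^{p,q} = \Ext^p_{{\sf Gr}\,A}\bigl(M,\; R^q\omega(\pi N)\bigr)\;\Longrightarrow\;\Ext^{p+q}_{\mathbb{P}^{nc}(V)}(\pi M,\pi N).
\]

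Two vanishings cut this page down dramatically. First, $\operatorname{gl.dim} A = 2$ by Proposition~\ref{prop.gldim2} forces $E_2^{p,q} = 0$ for $p \geq 3$. Second, the identification $R^q\omega\,\pi N \cong R^{q+1}\tau N$ for $q \geq 1$, together with the fact that $R^i\tau = 0$ for $i \geq 3$ (the $\mathbb{Z}$-indexed analogue of \cite[Corollary~4.7]{duality} underpinning~(\ref{eqn.cd1})), forces $R^q\omega\pi N = 0$ for $q \geq 2$. Thus the only $E_2^{p,q}$ contributing to total degree $\geq 2$ lie in $\{(2,0),(1,1),(2,1)\}$, and the content of the proof is to rule these three terms out.

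To do so I would exploit Theorem~\ref{thm.AisAG}: Auslander regularity combined with the Macaulay property forces $\uExt^2_A(M',A)$ to have canonical dimension $0$---i.e.\ to be a graded torsion (finite length) module---for every noetherian $M'$. Choosing the saturated representative $N = \omega\pi N$, so that $\tau N = 0$ by Proposition~\ref{prop.torsionmodules}, a hyperext-style pairing of graded torsion $\uExt^{\bullet}_A(M, A)$ against the torsion freeness of $N$ should annihilate the $(2,0)$ term $\Ext^2_A(M, N)$; the remaining two terms, in which $R^2\tau N$ is automatically $\tau$-torsion while $M$ may be taken $\tau$-torsion free, fall to the symmetric argument. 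The principal obstacle will be making these torsion-versus-torsion-free pairings precise in the $\mathbb{Z}$-indexed setting. As a cleaner alternative I would pursue in parallel, take a minimal injective resolution $0 \to N \to I^0 \to I^1 \to I^2 \to 0$ in ${\sf Gr}\,A$ of (a torsion-free representative of) $\cN$ and argue, via the grade decomposition of minimal injective resolutions over an Auslander regular ring together with Theorem~\ref{thm.onCdim}, that the top term $I^2$ is a direct sum of injective hulls of grade~$2$ modules; by Macaulay these have Krull dimension $0$, hence are graded torsion, so $\pi I^2 = 0$ and one obtains a length-one injective resolution of $\pi N$ in $\mathbb{P}^{nc}(V)$, yielding hereditariness at a stroke.
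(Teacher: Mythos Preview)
Your ``cleaner alternative'' at the end is exactly the paper's proof: take a minimal injective resolution $0 \to M \to J^0 \to J^1 \to J^2 \to 0$ in ${\sf Gr}\,A$, use Auslander regularity (via the argument of \cite[Proposition~2.4]{asz}) to see that $J^2$ is essentially $0$-pure with respect to $\Cdim$, invoke Theorem~\ref{thm.onCdim} to conclude $J^2$ is graded torsion, and then apply $\pi$ to obtain a length-one injective resolution in $\mathbb{P}^{nc}(V)$. So you already have the correct argument---you should simply lead with it rather than presenting it as a backup.

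By contrast, your primary spectral-sequence plan has a real gap at the step you yourself flag. The claim that $\Ext^2_{{\sf Gr}\,A}(M,N)=0$ whenever $N=\omega\pi N$ is $\tau$-torsion free is not something one gets from a ``hyperext-style pairing'' of the graded-torsion module $\uExt^2_A(M,A)$ against $N$: there is no duality identifying $\Ext^2_A(M,N)$ with a Hom out of $\uExt^2_A(M,A)$ in this generality, and in fact the vanishing of $\Ext^2_A(M,N)$ for saturated $N$ is essentially \emph{equivalent} to the statement that $J^2$ in a minimal injective resolution of $N$ is graded torsion---i.e.\ it is the content of the corollary, not an input to it. The same objection applies to your treatment of the $(1,1)$ and $(2,1)$ terms. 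So the spectral-sequence route, as written, is circular until you supply exactly the Auslander-regular/Macaulay control of $J^2$ that your alternative approach uses directly; once you have that, the spectral sequence is superfluous.
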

\begin{proof}
Let $M$ be any noetherian $A$-module and
$$0 \ra M \ra J^0 \ra J^1 \ra J^2 \ra 0$$
be a minimal injective resolution. It suffices to show that $J^2$ is graded torsion for then as in \cite[(7.1.4)]{az}, it will induce a length one injective resolution in $\mathbb{P}^{nc}(V)$. The proof of \cite[Proposition~2.4]{asz} can be repeated to show that $J^2$ is essentially 0-pure with respect to the canonical dimension. Note that by Theorem~\ref{thm.onCdim}, a noetherian module has canonical dimension 0 if and only if it has finite length. It now follows from \cite[Proposition~2.2(2)]{az} that $J^2$ is itself graded torsion.
\end{proof}

The Auslander-Gorenstein theory proof of \cite[Proposition~2.5(3)]{asz} can be repeated in the $\mathbb{Z}$-indexed context to reprove the following result of Nyman's \cite[Lemma~3.5]{nyman}.

\begin{lemma}  \label{lemma.pd2}
If $M$ is a noetherian $A$-module, then $\underline{\Hom}_A(\bar{A}, M) \neq 0$ iff $\operatorname{pd}_A M = 2$.
\end{lemma}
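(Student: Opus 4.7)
The plan is to exploit that $\operatorname{gl.dim} A = 2$ (Proposition~\ref{prop.gldim2}), so $\operatorname{pd}_A M \leq 2$. A standard minimal free resolution plus Nakayama argument shows that $\operatorname{pd}_A M = 2$ iff $\uExt^2_A(M,A) \neq 0$. The target equivalence then becomes: $M$ has a non-zero simple submodule iff $\uExt^2_A(M,A) \neq 0$.

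The easy direction ($M$ has a simple submodule $\Rightarrow \operatorname{pd}_A M = 2$) goes as follows. Any simple $A$-module $S$ has $\Cdim S = 0$, hence $j(S) = 2$ by the Macaulay property (Theorem~\ref{thm.AisAG}) together with Theorem~\ref{thm.onCdim}, and so $\uExt^2_A(S,A) \neq 0$. Given the short exact sequence $0 \to S \to M \to M/S \to 0$, the long exact sequence for $\uExt_A(-,A)$ yields a surjection $\uExt^2_A(M,A) \twoheadrightarrow \uExt^2_A(S,A)$ since $\uExt^3 = 0$ by global dimension, so $\uExt^2_A(M,A) \neq 0$.

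The harder direction is $\operatorname{pd}_A M = 2 \Rightarrow \underline{\Hom}_A(\bar A, M) \neq 0$. I will set $N := \uExt^2_A(M,A)$; applying the Auslander condition to $N$ viewed as a submodule of itself, together with $\operatorname{gl.dim} A = 2$, gives $j(N) = 2$, hence $N$ is finite length by Macaulay and in particular $\uExt^2_A(N,A) \neq 0$. The key step is then to embed $\uExt^2_A(N,A)$ as a submodule of $M$. For this I will use the biduality isomorphism $M \simeq \mathbf{R}\underline{\Hom}_A(\mathbf{R}\underline{\Hom}_A(M,A),A)$ that holds in the Auslander regular setting of Section~\ref{sec.Ausreg}. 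Setting $C = \mathbf{R}\underline{\Hom}_A(M,A)$, the truncation triangle $\tau_{<2} C \to C \to N[-2] \to$ dualizes to an exact triangle
$$\uExt^2_A(N,A) \to M \to \mathbf{R}\underline{\Hom}_A(\tau_{<2} C, A) \to,$$
and the Auslander vanishing (via the spectral sequence $E_2^{p,q} = \uExt^p_A(\uExt^{-q}_A(M,A),A)$, which vanishes for $p+q<0$) forces the third term to have zero cohomology in degree $-1$. The long exact cohomology sequence therefore produces an injection $\uExt^2_A(N,A) \hookrightarrow M$, and its image, being non-zero and of finite length, contains the desired simple submodule.

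The main obstacle is confirming that the biduality isomorphism and its attendant derived machinery (truncation in a bounded derived category of noetherian indexed modules, and the double-$\uExt$ spectral sequence) are legitimate in the $\mathbb{Z}$-indexed setting. However, this has already been largely addressed in Section~\ref{sec.Ausreg} by adapting Yekutieli--Zhang, so the extension needed here should be a routine verification following the same pattern.
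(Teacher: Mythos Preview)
Your argument is correct and is precisely the Auslander--Gorenstein theory proof the paper defers to via \cite[Proposition~2.5(3)]{asz}: the key ingredients (the Auslander condition giving $j(\uExt^2_A(M,A))\geq 2$, the Macaulay property identifying grade-2 modules as finite length, and the double-$\uExt$ spectral sequence/biduality producing the embedding $\uExt^2_A(\uExt^2_A(M,A),A)\hookrightarrow M$) are exactly those used there. One small point of exposition: to conclude that the image $\uExt^2_A(N,A)$ has finite length you should again invoke Auslander plus Macaulay (applied now to $\uExt^2_A(N,A)$ itself), rather than only to $N$.
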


\section{Classical Serre duality and Hilbert functions}  \label{sec.Serre}

If $V$ is not algebraic, then $\mathbb{P}^{nc}(V)$ is not Hom-finite, so there is no chance of defining a Serre functor. However, classical versions of Serre duality do hold. We examine such results in this section and apply them to the study of Hilbert functions of torsion sheaves.

A coherent sheaf is {\em torsion} (respectively {\em torsion free}) if it has the form $\pi M$ for some torsion (respectively torsion free) $A$-module $M$.

Note that $\operatorname{End} \mathcal{A}_i = K_{i}$ so $\Ext^p_{\mathbb{P}^{nc}(V)}(\mathcal{A}_i,\mathcal{M})$ is naturally a right vector space over $K$ whilst $\Ext^p_{\mathbb{P}^{nc}(V)}(\mathcal{M},\mathcal{A}_i)$ is a left vector space. When $A$ is the usual symmetric algeba $S^*(K^2)$, then $\Ext^p_{\mathbb{P}^1}(\mathcal{A}_0,-) = H^p(\mathbb{P}^1,-)$ and $\omega_{\mathbb{P}^1} \cong \mathcal{A}_2$ so the following generalizes Serre's original duality theorem.

\begin{theorem}  \label{thm.serre}
For any coherent sheaf $\mathcal{M} \in {\sf Coh } \mathbb{P}^{nc}(V)$, there is a natural isomorphism of right $K$-spaces
$$\Ext^{1-p}_{\mathbb{P}^{nc}(V)}(\mathcal{A}_i,\mathcal{M}) \simeq \ ^*\Ext^p_{\mathbb{P}^{nc}(V)}(\mathcal{M}, \mathcal{A}_{i+2}).$$
\end{theorem}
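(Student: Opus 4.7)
The plan is to construct the natural transformation via Yoneda composition and to verify it is an isomorphism first on line bundles and then on general coherent sheaves by d\'evissage.

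First I would use Corollary~\ref{cor.projAhereditary} to note that $\Ext^p_{\mathbb{P}^{nc}(V)}$ vanishes for $p \geq 2$, reducing the claim to $p=0$ and $p=1$. To produce the map, compose Yoneda products to obtain the functorial pairing
$$\Ext^p_{\mathbb{P}^{nc}(V)}(\mathcal{M},\mathcal{A}_{i+2}) \otimes \Ext^{1-p}_{\mathbb{P}^{nc}(V)}(\mathcal{A}_i,\mathcal{M}) \longrightarrow \Ext^1_{\mathbb{P}^{nc}(V)}(\mathcal{A}_i,\mathcal{A}_{i+2}).$$
By the chain (\ref{eqn.linebundles1})--(\ref{eqn.linebundles3}), the target is $\,^*A_{i+2,i+2} = \,^*K_{i+2}$, which is canonically $K_{i+2} = K_i$. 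The adjoint of this pairing supplies the candidate natural transformation
$$\eta_{\mathcal{M}}: \Ext^p_{\mathbb{P}^{nc}(V)}(\mathcal{M},\mathcal{A}_{i+2}) \longrightarrow \,^*\Ext^{1-p}_{\mathbb{P}^{nc}(V)}(\mathcal{A}_i,\mathcal{M}).$$

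Second, I would verify that $\eta_{\mathcal{A}_n}$ is an isomorphism on every line bundle. This is a direct computation using (\ref{eqn.homs}) and (\ref{eqn.linebundles1})--(\ref{eqn.linebundles3}). For $p=0$, the identity $\Hom(\mathcal{A}_n,\mathcal{A}_{i+2}) \cong A_{i+2,n}$ dualizes to $\,^*A_{i+2,n} \cong \Ext^1(\mathcal{A}_i,\mathcal{A}_n)$. For $p=1$, I would combine the degree-two bimodule isomorphism $\phi: A \to A(2)$ from Lemma~\ref{lemma.ranktwo} (which identifies $A_{n+2,i+2}$ with $A_{n,i}$) with reflexivity $\,^*(\,^*W) \cong W$ of a finite-dimensional bimodule to match $\,^*\Ext^1(\mathcal{A}_n,\mathcal{A}_{i+2}) \cong \,^*(\,^*A_{n+2,i+2})$ with $\Hom(\mathcal{A}_i,\mathcal{A}_n) \cong A_{n,i}$.

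Third, I would pass to general $\mathcal{M} = \pi M$ with $M$ noetherian. By Proposition~\ref{prop.gldim2}, $M$ has a projective resolution $0 \to P_2 \to P_1 \to P_0 \to M \to 0$ with each $P_j$ a finite direct sum of $e_kA$'s. Setting $\mathcal{P}_j = \pi P_j$ and $\mathcal{N} = \ker(\mathcal{P}_0 \to \mathcal{M})$, exactness of $\pi$ produces two short exact sequences $0 \to \mathcal{P}_2 \to \mathcal{P}_1 \to \mathcal{N} \to 0$ and $0 \to \mathcal{N} \to \mathcal{P}_0 \to \mathcal{M} \to 0$, whose end terms are all sums of line bundles. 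Since $\mathbb{P}^{nc}(V)$ is hereditary, the long exact sequences of $\Ext^*(\mathcal{A}_i,-)$ and $\Ext^*(-,\mathcal{A}_{i+2})$ truncate at degree $1$, and exactness of $\,^*(-)$ on finite-dimensional bimodules preserves exactness of the dualized sequence. Naturality of $\eta$ together with the five lemma, applied first to the resolution of $\mathcal{N}$ and then to the sequence for $\mathcal{M}$, promotes the isomorphism from the $\mathcal{P}_j$'s to $\mathcal{N}$ and then to $\mathcal{M}$.

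The main obstacle will be verifying that the Yoneda pairing is compatible with the explicit bimodule identifications of \cite{duality} underlying (\ref{eqn.linebundles1})--(\ref{eqn.linebundles3}), so that $\eta_{\mathcal{A}_n}$ actually coincides with the isomorphism arising from the formulas for $\Ext$ and $\Hom$ of line bundles. This involves tracking the left/right $K$-module structures carefully: $\Ext^{1-p}(\mathcal{A}_i,\mathcal{M})$ inherits its right $K_i$-structure from the left action on $\mathcal{A}_i$, while $\Ext^p(\mathcal{M},\mathcal{A}_{i+2})$ carries a left $K_{i+2}$-structure from $\mathcal{A}_{i+2}$, and the identification $K_i = K_{i+2}$ must be compatible with the trace pairing landing in $\Ext^1(\mathcal{A}_i,\mathcal{A}_{i+2})$. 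Once this compatibility is in hand the rest of the argument is essentially formal.
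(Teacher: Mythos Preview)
Your proposal is correct and is precisely what the paper's one-line proof is pointing to: the paper simply says the result ``follows from the usual proof for classical Serre duality \cite{harts} and the cohomology of line bundles computations in Section~\ref{section.derived},'' and you have unpacked exactly that argument---construct the trace pairing via Yoneda, verify it on line bundles using (\ref{eqn.homs}) and (\ref{eqn.linebundles1})--(\ref{eqn.linebundles3}), then d\'evissage via a finite resolution by line bundles (available from Proposition~\ref{prop.gldim2}) and the five lemma. One small remark: you do not actually need Corollary~\ref{cor.projAhereditary} to know that $\Ext^p$ vanishes for $p\geq 2$, since (\ref{eqn.cd1}) already gives this directly for $\Ext^p(\mathcal{A}_m,-)$, and that together with the resolution by line bundles is all the d\'evissage requires; but invoking the corollary is harmless since it is proved earlier.
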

\begin{proof}
This follows from the usual proof for classical Serre duality \cite{harts} and the cohomology of line bundles computations in Section~\ref{section.derived}. The reader can check \cite{chan} for details.
\end{proof}

 To study Hilbert functions, we need to recall the theory of Euler characteristics. For arbitrary coherent $\cM$, we define the function
$$\chi(\cM,n) = \dim_K \Hom_{\mathbb{P}^{nc}(V)}(\mathcal{A}_n, \cM) - \dim_K \Ext^1_{\mathbb{P}^{nc}(V)}(\mathcal{A}_n, \cM).$$
Now $\omega \mathcal{M} = \oplus_n \Hom_{\mathbb{P}^{nc}(V)}(\mathcal{A}_n, \cM)$ so there is a strong relationship between the function $\chi(\cM,-)$ and the Hilbert function $f_{\omega \cM}$ introduced in Section~\ref{sec.domain}. Indeed, (\ref{eqn.homs}, \ref{eqn.linebundles3}) show that
$$\chi(\mathcal{A}_i,n) = n-i+ 1, \text{ for all } n .$$
Since every coherent sheaf has a resolution by direct sums of $\mathcal{A}_i$, we see that $\chi(\cM,n)$ is a polynomial which equals $f_{\omega \cM}(n)$ for all sufficiently large $n$.

\begin{corollary}  \label{cor.hilbconstant}
Let $\cM$ be a torsion coherent sheaf on $\mathbb{P}^{nc}(V)$. Then $f_{\omega\cM}$ is constant.
\end{corollary}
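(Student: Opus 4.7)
The plan is to show that the polynomial $\chi(\cM,n)$ is constant in $n$, which suffices because $\chi(\cM,n)$ agrees with $f_{\omega\cM}(n)$ for $n\gg 0$. The idea is to compute $\chi(\cM,n)$ by resolving a finitely generated torsion $A$-module representative of $\cM$ by sums of the line bundles $e_jA$, and then to show that the coefficient of $n$ in the resulting alternating sum vanishes via a localization argument.

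First I would select a finitely generated torsion $A$-module $M$ with $\pi M = \cM$; this uses only that $\cM$ is coherent torsion together with the fact that submodules of torsion modules are torsion (by exactness of localization at $Q$). By Proposition~\ref{prop.gldim2}, $M$ admits a minimal finite projective resolution
$$ 0\to P^2\to P^1\to P^0\to M\to 0 $$
in which each $P^i$ is a finite direct sum of modules of the form $e_jA$; let $r_i$ denote the number of summands. Applying $\pi$ and using additivity of $\chi$ on the resulting four-term exact sequence in $\mathbb{P}^{nc}(V)$ --- valid since $\Ext^{\geq 2}=0$ by Corollary~\ref{cor.projAhereditary} --- together with the formula $\chi(\mathcal{A}_j,n)=n-j+1$ established just above, I would read off that the coefficient of $n$ in $\chi(\cM,n)$ equals $r_0-r_1+r_2$.

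To conclude, I would localize the projective resolution of $M$ at the right ring of fractions $Q$ of $A$, whose existence is guaranteed by Theorem~\ref{thm.Adomain}. Exactness of localization together with $M\otimes_A Q=0$ produces an exact sequence
$$ 0\to P^2\otimes_A Q\to P^1\otimes_A Q\to P^0\otimes_A Q\to 0 $$
of right $Q$-modules. By Proposition~\ref{prop.structureQ}, each $e_jA\otimes_A Q\cong e_jQ$ is isomorphic as a right $Q$-module to $e_0Q$, and the category of right $Q$-modules is equivalent to modules over the division ring $Q_{00}$; hence the displayed sequence amounts to an exact sequence of finitely generated free $Q_{00}$-modules of ranks $r_2,r_1,r_0$, which forces $r_0-r_1+r_2=0$. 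The point requiring the most care is the manufacture of a finitely generated torsion representative $M$ and the verification that its minimal projective resolution has the stated form; everything else is routine bookkeeping with the hereditary structure on $\mathbb{P}^{nc}(V)$ and the ring of fractions.
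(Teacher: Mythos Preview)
Your argument that $\chi(\cM,n)$ is a constant polynomial is correct, and it takes a genuinely different route from the paper: where the paper invokes the Macaulay property of $A$ (torsion implies $j(M)>0$, hence $\Cdim M\le 1$, hence $\dim M\le 1$), you instead read off the leading coefficient $r_0-r_1+r_2$ from a projective resolution and kill it by localizing at the ring of fractions $Q$. Both arguments are sound; yours has the mild advantage of not relying on the Auslander--Gorenstein machinery of Section~\ref{sec.Ausreg}.

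However, there is a genuine gap. The corollary asserts that the \emph{function} $n\mapsto f_{\omega\cM}(n)=\dim_{K_n}(\omega\cM)_n$ is constant for every $n\in\mathbb{Z}$, not merely that it eventually agrees with a constant polynomial; this stronger statement is what the later applications (e.g.\ Propositions~\ref{prop.Bisimple} and~\ref{prop.Biuniquesimple}, Lemma~\ref{lemma.omegagtorfree}) actually require. Your opening sentence --- ``which suffices because $\chi(\cM,n)$ agrees with $f_{\omega\cM}(n)$ for $n\gg 0$'' --- only yields eventual constancy. The paper closes this gap with classical Serre duality (Theorem~\ref{thm.serre}): for every $n$ one has
\[
\Ext^1_{\mathbb{P}^{nc}(V)}(\mathcal{A}_n,\cM)\;\cong\;{}^*\Hom_{\mathbb{P}^{nc}(V)}(\cM,\mathcal{A}_{n+2})\;=\;0,
\]
the last vanishing holding because $\cM$ is torsion while $\mathcal{A}_{n+2}$ is torsion-free. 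Hence $f_{\omega\cM}(n)=\chi(\cM,n)$ for \emph{all} $n$, not just large ones. You need to add this step.
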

\begin{proof}
Note that $\cM = \pi M$ for some torsion module $M$. Since it is torsion, $j(M) >0$ and $\dim M = \Cdim M \leq 1$ as $A$ is Macaulay. It follows that $\chi(\cM,n)$ is constant. From classical Serre duality Theorem~\ref{thm.serre}, we know that $\Ext^1_{\mathbb{P}^{nc}(V)}(\mathcal{A}_n, \cM)=0$ so $f_{\omega\cM} = \chi(\cM,n)$ is constant too.
\end{proof}

Before proving our second version of Serre duality, we need to introduce some notation. Let $g = \{g_i\}$ be the normal family of elements in $A= \mathbb{S}^{nc}(V)$ defined in Section~\ref{sec.normal} and $\phi:A \rightarrow A(\delta)$ be conjugation by $g$ where $\delta = \deg g$. Note that $\phi$ is left and right $K$-linear in the sense that $K_i = K_{i+\delta}$ and with this identification, the restricted maps $\phi:A_{ij} \rightarrow A_{i + \delta,j + \delta}$ are $K_i-K_j$-bimodule maps. Also $\phi(g) = g$ as in the usual graded case. Analogous to (\ref{eqn.gmult}), right multiplication by $g$ induces a morphism
$$\cM^{\phi^{-1}}(-\delta) \xrightarrow{g} \cM.$$
We denote the image of this map by $\cM g$.

\begin{lemma}  \label{lem.gmap}
 Let $\cM, \mathcal{N}$ be sheaves on $\mathbb{P}^{nc}(V)$. The maps
$$(g,\mathcal{N}): \Ext^i_{\mathbb{P}^{nc}(V)}(\cM, \mathcal{N}) \ra \Ext^i_{\mathbb{P}^{nc}(V)}(\cM^{\phi^{-1}}(-\delta), \mathcal{N}), $$
$$(\mathcal{M},g): \Ext^i_{\mathbb{P}^{nc}(V)}(\cM, \mathcal{N}) \ra \Ext^i_{\mathbb{P}^{nc}(V)}(\cM, \mathcal{N}^{\phi}(\delta))$$
have isomorphic kernels and cokernels.
\end{lemma}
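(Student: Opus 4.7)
The plan hinges on one observation: right multiplication by $g$ defines a natural transformation $\eta\colon \operatorname{id} \Rightarrow T$ of endofunctors on $\mathbb{P}^{nc}(V)$, where $T := (-)^{\phi}(\delta)$ is the twist functor, with component $g_{\mathcal{L}}\colon \mathcal{L} \to \mathcal{L}^{\phi}(\delta)$ at each sheaf $\mathcal{L}$. The map $g_{\mathcal{L}}$ is a well-defined module morphism precisely because conjugation by $g$ equals $\phi$ by construction; naturality $T(f) \circ g_{\mathcal{L}_{1}} = g_{\mathcal{L}_{2}} \circ f$ for every morphism $f\colon \mathcal{L}_{1}\to\mathcal{L}_{2}$ is the tautology that morphisms respect right multiplication.

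With $\eta$ in hand I would exhibit the commutative square
$$
\begin{array}{ccc}
\Ext^{i}(\mathcal{M}, \mathcal{N}) & \xrightarrow{(g,\mathcal{N})} & \Ext^{i}(\mathcal{M}^{\phi^{-1}}(-\delta),\mathcal{N}) \\
\downarrow \operatorname{id} & & \downarrow T \\
\Ext^{i}(\mathcal{M}, \mathcal{N}) & \xrightarrow{(\mathcal{M}, g)} & \Ext^{i}(\mathcal{M},\mathcal{N}^{\phi}(\delta))
\end{array}
$$
in which the right-hand vertical is the isomorphism induced by the auto-equivalence $T$ (invertibility is noted after Definition~\ref{def.twist}) and the left-hand vertical is the identity. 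Commutativity forces the two horizontal maps to have identified kernels and identified cokernels, which is the conclusion of the lemma.

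To verify commutativity I would represent an $\Ext$ class $\xi$ via a resolution---for instance, a projective resolution $P_{\bullet}\to\mathcal{M}$ in ${\sf Gr}\,A$, which exists by Proposition~\ref{prop.gldim2}---and chase an $i$-cocycle $f\colon P_{i}\to\mathcal{N}$. The top-then-right route sends $f$ to $Tf\circ g_{P_{i}}$, after identifying $T(P_{i}^{\phi^{-1}}(-\delta))$ with $P_{i}$; the bottom route sends $f$ to $g_{\mathcal{N}}\circ f$. Naturality of $\eta$ applied to the cocycle $f$ yields exactly $Tf\circ g_{P_{i}} = g_{\mathcal{N}}\circ f$, closing the square.

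The only genuine obstacle is bookkeeping: one must verify that $T$ transports the projective resolution $P_{\bullet}^{\phi^{-1}}(-\delta) \to \mathcal{M}^{\phi^{-1}}(-\delta)$ together with the chain-level maps $g_{P_{\bullet}}$ back to $P_{\bullet}\to\mathcal{M}$ with right multiplication by $g$ on each $P_{i}$, and that this is compatible with the passage from ${\sf Gr}\,A$ to $\mathbb{P}^{nc}(V)$ via the exact quotient $\pi$. Once these formal identifications are settled, the naturality of $\eta$ does the rest and the lemma follows at once.
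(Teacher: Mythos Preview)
Your approach is essentially the paper's: both set up the same commutative square with the two maps as parallel arrows and the twist auto-equivalence $(-)^{\phi}(\delta)$ furnishing the isomorphism that identifies their kernels and cokernels. The one technical wrinkle is your plan to verify commutativity for $i>0$ via a projective resolution $P_{\bullet}\to\mathcal{M}$ in ${\sf Gr}\,A$: the images $\pi P_{i}$ are line bundles, which are neither projective nor in general $\Hom(-,\mathcal{N})$-acyclic in $\mathbb{P}^{nc}(V)$, so $\Hom_{\mathbb{P}^{nc}(V)}(\pi P_{\bullet},\mathcal{N})$ does not compute $\Ext^{\bullet}_{\mathbb{P}^{nc}(V)}(\mathcal{M},\mathcal{N})$ without further argument, and the bookkeeping you flag at the end becomes genuinely awkward. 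The paper sidesteps this entirely by taking an injective resolution $\mathcal{I}^{\bullet}$ of $\mathcal{N}$ in $\mathbb{P}^{nc}(V)$ and observing that $\mathcal{I}^{\bullet\,\phi}(\delta)$ is then an injective resolution of $\mathcal{N}^{\phi}(\delta)$; the $i=0$ square applied termwise to $\mathcal{I}^{\bullet}$ then gives the general case immediately.
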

\begin{proof}
 The case $i=0$ follows from the commutative diagram
$$\begin{CD}
   \Hom(\cM,\mathcal{N}) @= \Hom(\cM,\mathcal{N}) \\
 @V{(g,\mathcal{N})}VV @VV{(\cM,g)}V  \\
 \Hom(\cM^{\phi^{-1}}(-\delta),\mathcal{N}) @>{(-)^{\phi}(\delta)}>> \Hom(\cM,\mathcal{N}^{\phi}(\delta))
  \end{CD}$$
The general case follows from taking an injective resolution $\mathcal{I}^{\bullet}$ for $\mathcal{N}$ and using the injective resolution $\mathcal{I}^{\bullet \phi}(\delta)$ for $\mathcal{N}^{\phi}(\delta)$.
\end{proof}

Note that $e_iA/g_iA$ is also a left vector space over $K_i$. Our final version of Serre duality is

\begin{proposition}  \label{prop.SerreAmodg}
 For any coherent sheaf $\cM$, there is a natural isomorphism of right $K$-spaces
$$\Hom_{\mathbb{P}^{nc}(V)}(\mathcal{A}_i/\mathcal{A}_ig, \cM) \cong \ ^*\Ext^1_{\mathbb{P}^{nc}(V)}(\cM, \mathcal{A}_{i+2}/\mathcal{A}_{i+2}g) .$$
\end{proposition}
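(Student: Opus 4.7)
The strategy is to apply $\Hom(-,\cM)$ and $\Hom(\cM,-)$ to the multiplication-by-$g$ short exact sequences
\begin{equation*}
0 \ra \mathcal{A}_i^{\phi^{-1}}(-\delta) \xrightarrow{g} \mathcal{A}_i \ra \mathcal{A}_i/\mathcal{A}_ig \ra 0
\qquad\text{and}\qquad
0 \ra \mathcal{A}_{i+2}^{\phi^{-1}}(-\delta) \xrightarrow{g} \mathcal{A}_{i+2} \ra \mathcal{A}_{i+2}/\mathcal{A}_{i+2}g \ra 0,
\end{equation*}
and then match up the resulting long exact sequences using classical Serre duality (Theorem~\ref{thm.serre}). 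Since $\mathbb{P}^{nc}(V)$ is hereditary (Corollary~\ref{cor.projAhereditary}), the first long exact sequence realises $\Hom(\mathcal{A}_i/\mathcal{A}_ig,\cM)$ as the kernel of $(g,\cM)\colon \Hom(\mathcal{A}_i,\cM)\ra\Hom(\mathcal{A}_i^{\phi^{-1}}(-\delta),\cM)$, while the second realises $\Ext^1(\cM,\mathcal{A}_{i+2}/\mathcal{A}_{i+2}g)$ as the cokernel of $(\cM,g)\colon \Ext^1(\cM,\mathcal{A}_{i+2}^{\phi^{-1}}(-\delta))\ra\Ext^1(\cM,\mathcal{A}_{i+2})$.

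Next I would rewrite the two $\Hom$ groups appearing in the first sequence as left duals of $\Ext^1$ groups. For $\Hom(\mathcal{A}_i,\cM)$ this is Theorem~\ref{thm.serre} on the nose. For $\Hom(\mathcal{A}_i^{\phi^{-1}}(-\delta),\cM)$ one first notes that $(-)^{\phi^{-1}}(-\delta)$ is an auto-equivalence of $\mathbb{P}^{nc}(V)$ with quasi-inverse $(-)^{\phi}(\delta)$, so
\begin{equation*}
\Hom(\mathcal{A}_i^{\phi^{-1}}(-\delta),\cM) \cong \Hom(\mathcal{A}_i,\cM^{\phi}(\delta)) \cong {}^*\Ext^1(\cM^{\phi}(\delta),\mathcal{A}_{i+2}) \cong {}^*\Ext^1(\cM,\mathcal{A}_{i+2}^{\phi^{-1}}(-\delta)),
\end{equation*}
where the middle isomorphism is again Serre duality and the last one is obtained by transporting the twist back across $\Ext^1$.

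The crux is then to verify that under these identifications the connecting map $(g,\cM)$ becomes precisely the left dual of $(\cM,g)$. This is a naturality statement for Serre duality with respect to the morphism $g\colon \mathcal{A}_i^{\phi^{-1}}(-\delta) \ra \mathcal{A}_i$; should checking this naturality directly prove awkward, Lemma~\ref{lem.gmap} provides the needed handle by equating $(g,-)$ with $(-,g)$ up to isomorphism of kernels and cokernels. Granting this, one finishes by observing that ${}^*(-)$ is left exact and contravariant, so it sends $\operatorname{coker}(\cM,g)$ to the kernel of the dualised map, yielding
\begin{equation*}
\Hom(\mathcal{A}_i/\mathcal{A}_ig,\cM) \;\cong\; \ker(g,\cM) \;\cong\; {}^*\operatorname{coker}(\cM,g) \;\cong\; {}^*\Ext^1(\cM,\mathcal{A}_{i+2}/\mathcal{A}_{i+2}g).
\end{equation*}

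The main obstacle will be the compatibility check just described, namely confirming that the Serre-duality isomorphisms intertwine the two multiplication-by-$g$ maps; it is precisely for this purpose that Lemma~\ref{lem.gmap} was established. Everything else is a mechanical assembly of long exact sequences, the hereditary truncation in degree $\geq 2$, and the auto-equivalence coming from the normal family $\{g_i\}$.
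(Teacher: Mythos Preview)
Your proposal is correct and follows essentially the same route as the paper: both arguments identify $\Hom(\mathcal{A}_i/\mathcal{A}_ig,\cM)$ as the kernel of the multiplication-by-$g$ map, invoke classical Serre duality (Theorem~\ref{thm.serre}) together with the auto-equivalence $(-)^{\phi}(\delta)$, and use Lemma~\ref{lem.gmap} to pass between $(g,-)$ and $(-,g)$. The only organisational difference is that the paper stays entirely on the kernel side---applying Lemma~\ref{lem.gmap} twice to rewrite everything as a kernel of a map between ${}^*\Ext^1$ groups---whereas you express $\Ext^1(\cM,\mathcal{A}_{i+2}/\mathcal{A}_{i+2}g)$ as a cokernel and then dualise; these are equivalent unwindings of the same diagram.
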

\begin{proof}
Below we drop the subscript $\mathbb{P}^{nc}(V)$ for Hom and Ext spaces and use classical Serre duality (Theorem~\ref{thm.serre}) and Lemma~\ref{lem.gmap}.
\begin{eqnarray*}
 \Hom(\mathcal{A}_i/\mathcal{A}_ig, \cM) & \cong & \ker \left( (g,\cM): \Hom(\mathcal{A}_i,\cM) \ra \Hom(\mathcal{A}_i^{\phi^{-1}}(-\delta),\cM)\right) \\
 & \cong & \ker \left( (\mathcal{A}_i,g): \Hom(\mathcal{A}_i,\cM) \ra \Hom(\mathcal{A}_i,\cM^{\phi}(\delta))\right) \\
 & \cong & \ker \left( (g,\mathcal{A}_{i+2}): \ ^*\Ext^1(\cM, \mathcal{A}_{i+2}) \ra \ ^*\Ext^1(\cM^{\phi}(\delta), \mathcal{A}_{i+2})\right)\\
 & \cong & \ker \left( (\cM^{\phi}(\delta),g): \ ^*\Ext^1(\cM^{\phi}(\delta), \mathcal{A}_{i+2}^{\phi}(\delta)) \ra \ ^*\Ext^1(\cM^{\phi}(\delta), \mathcal{A}_{i+2})\right)\\
 & \cong & \ ^*\Ext^1(\cM^{\phi}(\delta), \mathcal{A}_{i+2}^{\phi}(\delta)/\mathcal{A}_{i+2}^{\phi}(\delta)g) \\
 & \cong & \ ^*\Ext^1_{\mathbb{P}^{nc}(V)}(\cM, \mathcal{A}_{i+2}/\mathcal{A}_{i+2}g).
\end{eqnarray*}

\end{proof}

\section{Sheaves on $\mathbb{P}^{nc}(V)$ and modules over corresponding species}  \label{sec.classify}

In this section we give a fairly complete description of coherent sheaves on $\mathbb{P}^{nc}(V)$ and then use the derived equivalence from Proposition \ref{prop.tilting} to give the analogous description of modules over the species $\Lambda = \operatorname{Spe} V$ defined in (\ref{eqn.species}).

We fix the tilting bundle $\mathcal{T} = \mathcal{A}_0 \oplus \mathcal{A}_1$ and let $\Phi= \operatorname{RHom}_{\mathbb{P}^{nc}(V)}(\mathcal{T},-): {\sf D}^b(\mathbb{P}^{nc}(V)) \ra {\sf D}^b(\Lambda)$. Recall that, by Corollary \ref{cor.projAhereditary}, $\mathbb{P}^{nc}(V)$ is hereditary so every bounded complex of sheaves is isomorphic to a direct sum of shifts of sheaves. There is a similar result for complexes of $\Lambda$-modules. In particular, $\Phi$ gives a bijection between indecomposable coherent sheaves and indecomposable modules. More precisely, given an indecomposable coherent sheaf $\mathcal{M}$, there is a unique integer $n$ such that the shift $\Phi(\mathcal{M})[n]$ is an indecomposable $\Lambda$-module and every indecomposable module arises this way.

\subsection{Classification of torsion-free sheaves} \label{subsec.class}

We wish to recover the following results of \cite[Theorem~3.14]{nyman}:
 a non-commutative Grothendieck's splitting theorem for torsion-free sheaves and the fact that every coherent sheaf is the direct sum of a torsion sheaf with a torsion free sheaf. Our treatment here is parallel to that in \cite{nyman} but bypasses the general torsion theory for non-commutative integral spaces by using the torsion theory for $\mathbb{Z}$-indexed algebras developed in Section~\ref{sec.torsion}.

We start with Grothendieck splitting.

\begin{theorem}  \label{thm.Grothendieck}
Any torsion-free coherent sheaf on $\mathbb{P}^{nc}(V)$ is isomorphic to a direct sum of $\mathcal{A}_i$'s (for possibly different $i$).
\end{theorem}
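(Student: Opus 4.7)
I'll prove the theorem by induction on the rank of $\mathcal{M}$, which is well-defined via Proposition~\ref{prop.structuretorfree}: any noetherian torsion-free $A$-module representing $\mathcal{M}$ embeds in $(e_iA)^{\oplus r}$ with torsion cokernel for some $i$ and unique $r$. The rank-zero case is immediate, so I focus on the rank-one base case and the inductive step for $r\geq 2$.

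For rank one, the Euler characteristic $\chi(\mathcal{M},k):=\dim_K\Hom(\mathcal{A}_k,\mathcal{M})-\dim_K\Ext^1(\mathcal{A}_k,\mathcal{M})$ is a polynomial in $k$ by hereditariness (Corollary~\ref{cor.projAhereditary}) and the line-bundle resolution argument preceding Corollary~\ref{cor.hilbconstant}; combined with $\chi(\mathcal{A}_i,k)=k-i+1$ and the short exact sequence $0\to\mathcal{M}\to\mathcal{A}_i\to\mathcal{C}\to 0$ given by Proposition~\ref{prop.structuretorfree}, it takes the form $\chi(\mathcal{M},k)=k-a+1$ for some integer $a$. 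Evaluating at $k=a$ forces $\Hom(\mathcal{A}_a,\mathcal{M})\neq 0$, while evaluating at $k=a-2$ forces $\Ext^1(\mathcal{A}_{a-2},\mathcal{M})\neq 0$, which by classical Serre duality (Theorem~\ref{thm.serre}) gives $\Hom(\mathcal{M},\mathcal{A}_a)\neq 0$. I pick nonzero maps $\phi:\mathcal{A}_a\to\mathcal{M}$ and $\psi:\mathcal{M}\to\mathcal{A}_a$; since torsion subsheaves of a torsion-free sheaf vanish, $\phi$ is injective and $\operatorname{im}\psi$ is rank-one torsion-free. The composition $\psi\phi\in\End\mathcal{A}_a=K_a$ must be nonzero, for otherwise $\phi(\mathcal{A}_a)\subseteq\ker\psi$ would force $\ker\psi$ to have rank one and hence $\operatorname{im}\psi$ to be zero, contradicting $\psi\neq 0$. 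Thus $\psi\phi$ is a scalar isomorphism, $\phi$ splits, and $\mathcal{M}\cong\mathcal{A}_a$.

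For the inductive step with $r\geq 2$, let $S:=\{k\in\mathbb{Z}\mid\Hom(\mathcal{M},\mathcal{A}_k)\neq 0\}$. This set is bounded above by Serre vanishing (Theorem~\ref{thm.serrevanish}) together with Serre duality, and it is nonempty because $\chi(\mathcal{M},k)$ is of degree one with positive leading coefficient $r$ and so is negative for $k\ll 0$, forcing $\Ext^1(\mathcal{A}_k,\mathcal{M})\neq 0$. Setting $k_0:=\max S$ and picking a nonzero $\psi:\mathcal{M}\to\mathcal{A}_{k_0}$, the image is rank-one torsion-free, hence by the rank-one case some $\mathcal{A}_{k'}\subseteq\mathcal{A}_{k_0}$; the inclusion forces $k'\geq k_0$, and maximality forces $k'=k_0$, so $\psi$ is surjective.

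To split the resulting sequence $0\to\mathcal{K}\to\mathcal{M}\to\mathcal{A}_{k_0}\to 0$, where $\mathcal{K}:=\ker\psi$ is torsion-free of rank $r-1$ and by induction isomorphic to $\bigoplus_l\mathcal{A}_{k_l}$, I apply $\Hom(-,\mathcal{A}_{k_0+1})$: maximality kills $\Hom(\mathcal{M},\mathcal{A}_{k_0+1})$, and both $\Hom(\mathcal{A}_{k_0},\mathcal{A}_{k_0+1})=A_{k_0+1,k_0}$ and $\Ext^1(\mathcal{A}_{k_0},\mathcal{A}_{k_0+1})=A^*_{k_0,k_0-1}$ vanish, yielding $\Hom(\mathcal{K},\mathcal{A}_{k_0+1})=0$ and therefore $k_l\leq k_0$ for each $l$. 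Consequently $\Hom(\mathcal{K},\mathcal{A}_{k_0+2})=\bigoplus_l A_{k_0+2,k_l}=0$, so by Serre duality $\Ext^1(\mathcal{A}_{k_0},\mathcal{K})\cong{}^*\Hom(\mathcal{K},\mathcal{A}_{k_0+2})=0$ and the sequence splits, completing the induction. The most delicate point is the Euler-characteristic identification in the rank-one case that pins down the right integer $a$; the rest reduces to careful bookkeeping with the Ext computations of Section~\ref{section.derived}.
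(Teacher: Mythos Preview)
Your proof is correct, but it takes a genuinely different route from the paper's. The paper works at the module level: given a torsion-free noetherian $A$-module $M$, it uses Proposition~\ref{prop.structuretorfree} to get an exact sequence $0\to M\to(e_iA)^{\oplus n}\to N\to 0$, arranges $\tau N=0$ by enlarging $M$ within its graded-torsion closure, and then invokes Lemma~\ref{lemma.pd2} to conclude $\operatorname{pd}_A N\leq 1$, so $M$ is projective and hence a direct sum of $e_iA$'s. This is a five-line argument that uses only $\operatorname{gl.dim}A=2$ and the characterization in Lemma~\ref{lemma.pd2}; it never touches Serre duality or Euler characteristics.

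Your argument instead follows the classical Grothendieck template at the sheaf level: identify the correct line-bundle summand via $\chi$ in rank one, then in higher rank peel off a maximal line-bundle quotient and split it using Serre duality. This is longer and relies on the machinery of Sections~\ref{section.derived} and~\ref{sec.Serre} (Theorems~\ref{thm.serrevanish} and~\ref{thm.serre}, Corollary~\ref{cor.hilbconstant}), but it is more geometric and makes the analogy with $\mathbb{P}^1$ transparent. One small point you might make explicit: in the inductive step, after establishing $k'=k_0$ you conclude $\psi$ is surjective; this is because the inclusion $\operatorname{im}\psi\hookrightarrow\mathcal{A}_{k_0}$ is then a nonzero element of $\End\mathcal{A}_{k_0}=K_{k_0}$, hence an isomorphism. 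Otherwise the argument is complete.
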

\begin{proof}
Let $M$ be a torsion-free noetherian $A$-module. We wish to show $\pi M$ is a direct sum of $\mathcal{A}_i$'s. By Proposition~\ref{prop.structuretorfree}, one can construct an exact sequence of the form
$$ 0 \ra M \ra (e_{i}A)^{\oplus n} \xrightarrow{\phi} N \ra 0 $$
for some module $N$. We may assume that the graded torsion submodule $\tau N$ of $N$ is zero, for otherwise we may replace $M$ with $M':=\phi^{-1} \tau N$ and note that $\pi M' \simeq \pi M$. It follows now from Lemma~\ref{lemma.pd2} that $\operatorname{pd}_A N \leq 1$ so $M$ must be projective. Hence $M$ is a direct sum of $e_i A$ and the theorem is proved.
\end{proof}

Let $M$ be a noetherian $A$-module and $\mathcal{M} = \pi M$. We let $M'$ be the torsion submodule and define $\mathcal{M}' := \pi M'$ to be the {\em torsion subsheaf} of $\mathcal{M}$. We consider the exact sequence
$$ 0 \ra \mathcal{M}' \ra \mathcal{M} \ra \mathcal{M}/\mathcal{M}' \ra 0 .$$
Now $\mathcal{M}/\mathcal{M}'$ is torsion-free and hence a direct sum of $\mathcal{A}_i$'s. By Serre duality (Theorem \ref{thm.serre}), we know that
$$\Ext^1_{\mathbb{P}^{nc}(V)}(\mathcal{A}_i,\mathcal{M}')^* \simeq \Hom_{\mathbb{P}^{nc}(V)}(\mathcal{M}', \mathcal{A}_{i+2})=0$$
so the exact sequence splits giving the next result which was also proved in \cite[Theorem~3.14]{nyman}:
\begin{proposition}  \label{prop.torsionsplitsoff}
Any coherent sheaf on $\mathbb{P}^{nc}(V)$ is a direct sum of its torsion subsheaf and a torsion-free sheaf.
\end{proposition}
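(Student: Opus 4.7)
The plan is to execute the outline sketched in the paragraph immediately preceding the proposition, making explicit the one delicate point. Write $\mathcal{M} = \pi M$ for some noetherian $M \in {\sf Gr}\, A$, let $M'$ be the torsion submodule (Proposition~\ref{prop.torsionmodules}), and set $\mathcal{M}' := \pi M'$. Applying the exact functor $\pi$ to $0 \to M' \to M \to M/M' \to 0$ produces
$$0 \to \mathcal{M}' \to \mathcal{M} \to \mathcal{M}/\mathcal{M}' \to 0$$
in $\mathbb{P}^{nc}(V)$. By Proposition~\ref{prop.torsionmodules}(2), $M/M'$ is torsion-free, so Theorem~\ref{thm.Grothendieck} decomposes the quotient as $\mathcal{M}/\mathcal{M}' \cong \bigoplus_k \mathcal{A}_{n_k}$.

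To split the sequence it suffices to verify $\Ext^1_{\mathbb{P}^{nc}(V)}(\mathcal{A}_{n_k}, \mathcal{M}') = 0$ for each $k$. Classical Serre duality (Theorem~\ref{thm.serre}) gives a natural isomorphism
$$\Ext^1_{\mathbb{P}^{nc}(V)}(\mathcal{A}_n, \mathcal{M}')^{\ast} \cong \Hom_{\mathbb{P}^{nc}(V)}(\mathcal{M}', \mathcal{A}_{n+2}),$$
so everything reduces to showing that there are no non-zero morphisms from a torsion coherent sheaf to a line bundle.

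The only step requiring genuine care, and hence the main obstacle, is this last vanishing, which comes down to the assertion that the torsion subcategory is closed under subobjects in $\mathbb{P}^{nc}(V)$. Concretely, given $f: \mathcal{M}' \to \mathcal{A}_{n+2}$ with image $\mathcal{N}$, I would argue as follows. By adjointness and the fact that $e_{n+2}A$ is torsion-free, $\mathcal{N}$ lifts to a submodule $N \subseteq e_{n+2}A$, which is automatically torsion-free as a submodule of a torsion-free module over the domain $A$ (Theorem~\ref{thm.Adomain}). Thus $\mathcal{N} = \pi N$ is torsion-free, while being a quotient of the torsion sheaf $\mathcal{M}'$ it is also torsion; hence $\mathcal{N} = 0$, i.e. $f = 0$.

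Once this vanishing is in hand, $\Ext^1_{\mathbb{P}^{nc}(V)}(\mathcal{M}/\mathcal{M}', \mathcal{M}') = 0$, the sequence splits, and $\mathcal{M} \cong \mathcal{M}' \oplus \bigoplus_k \mathcal{A}_{n_k}$ is the desired decomposition into torsion plus torsion-free pieces.
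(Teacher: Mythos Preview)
Your argument is correct and follows the same route as the paper: form the torsion/torsion-free sequence, apply Grothendieck splitting to the quotient, and use Serre duality to reduce the splitting to $\Hom_{\mathbb{P}^{nc}(V)}(\mathcal{M}',\mathcal{A}_{n+2})=0$. Your added justification of this last vanishing is the only elaboration beyond the paper's terse assertion; for the lifting step you could simply cite $\omega\mathcal{A}_{n+2}=e_{n+2}A$ from (\ref{eqn.homs}) and work with the adjoint map $M'\to e_{n+2}A$ directly.
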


\subsection{Classification of irregular indecomposable modules over the species}

We are now in a position to give a complete description of irregular $\Lambda$-modules using the theory of Section \ref{subsec.class}.
Let
$e_0 =\left(
\begin{smallmatrix}
1 & 0 \\ 0 & 0
\end{smallmatrix}\right),
e_1 =\left(
\begin{smallmatrix}
0 & 0 \\ 0 & 1
\end{smallmatrix}\right)
$ be the standard diagonal idempotents in $\Lambda$. Following Ringel \cite[Section~7]{species}, we define the {\em defect} of a right $\Lambda$-module $M$ to be
$$\partial M = \dim_K M e_0 - \dim_K M e_1 .$$
A $\Lambda$-module $M$ is said to be {\em regular}, if it is a direct sum of indecomposable modules of defect 0. A module is otherwise said to be {\em irregular}. We sometimes use the notation $M_0 \otimes_K V \ra M_1$ to denote the $\Lambda$-module it defines.

We now show that the $\mathcal{A}_i$ give irregular modules. Indeed, if $i \leq 1$, then (\ref{eqn.homs}) shows that the corresponding $\Lambda$-module is
$$\Phi(\mathcal{A}_i) : A_{i0} \otimes_K V \ra A_{i1} .$$
Here the multiplication map is just multiplication in the $\mathbb{Z}$-indexed algebra and the defect is $-1$. On the other hand, if $i > 1$, then (\ref{eqn.linebundles1}), (\ref{eqn.linebundles2}), and (\ref{eqn.linebundles3}) gives the corresponding $\Lambda$-module as
$$\Phi(\mathcal{A}_i)[1]: A_{0i-2}^* \otimes_K V \xrightarrow{m} A_{1i-2}^*.$$
Here the defect is $1$ and the multiplication map $m$ is given by dualizing the multiplication in $A$ map $V \otimes_K A_{1i-2} \ra A_{0i-2}$.

We next examine the defect of $\Phi(\mathcal{M})$ for a torsion sheaf $\mathcal{M}$.

\begin{proposition}  \label{prop.defect0}
Let $\cM$ be a torsion coherent sheaf on $\mathbb{P}^{nc}(V)$. Then $\Phi(\cM)$ has defect zero.
\end{proposition}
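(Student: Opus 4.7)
The plan is to observe that $\Phi(\cM)$ is concentrated in cohomological degree zero as a $\Lambda$-module, and then to read off its defect directly from the Hilbert function computation done in Corollary~\ref{cor.hilbconstant}.

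First I would verify that $\Phi(\cM) = \Hom_{\mathbb{P}^{nc}(V)}(\mathcal{T}, \cM)$, i.e.\ that the higher Ext's vanish. By classical Serre duality (Theorem~\ref{thm.serre}) we have
$$\Ext^1_{\mathbb{P}^{nc}(V)}(\mathcal{A}_i, \cM) \cong \ ^*\Hom_{\mathbb{P}^{nc}(V)}(\cM, \mathcal{A}_{i+2})$$
for $i = 0,1$, and since $\mathcal{A}_{i+2} = \pi e_{i+2}A$ is torsion-free (because $e_{i+2}A$ is $2$-pure by Theorem~\ref{thm.Adomain}), any morphism from the torsion sheaf $\cM$ to $\mathcal{A}_{i+2}$ must vanish.

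Next, using the identification~(\ref{eqn.cohom}) of the functor $\Hom(\mathcal{A}_i,-)$ with the degree $i$ component of $\omega$, I obtain
$$\Phi(\cM)\,e_i \ = \ \Hom_{\mathbb{P}^{nc}(V)}(\mathcal{A}_i, \cM) \ \cong \ (\omega\cM)_i$$
as right $K_i$-modules, for $i = 0,1$.

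Finally, Corollary~\ref{cor.hilbconstant} asserts that the Hilbert function $f_{\omega\cM}(n) = \dim_{K_n}(\omega\cM)_n$ is constant for every torsion coherent sheaf $\cM$. Therefore
$$\partial\,\Phi(\cM) \ = \ \dim_{K_0}(\omega\cM)_0 \ - \ \dim_{K_1}(\omega\cM)_1 \ = \ f_{\omega\cM}(0) - f_{\omega\cM}(1) \ = \ 0,$$
which proves the proposition. There is no real obstacle here once Corollary~\ref{cor.hilbconstant} is in place; the only small subtlety is confirming that in the simple case (where $K_0 \neq K_1$) the defect of Ringel's convention should read $\dim_{K_0} Me_0 - \dim_{K_1} Me_1$, which is what is matched by the above computation.
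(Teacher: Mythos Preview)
Your proof is correct and follows essentially the same route as the paper: both reduce the computation of $\partial\,\Phi(\cM)$ to the constancy of the Hilbert function $f_{\omega\cM}$ supplied by Corollary~\ref{cor.hilbconstant}. You add an explicit verification that $\Ext^1_{\mathbb{P}^{nc}(V)}(\mathcal{A}_i,\cM)=0$ (so that $\Phi(\cM)$ is an honest module), which the paper leaves implicit since this vanishing is already established within the proof of Corollary~\ref{cor.hilbconstant} itself.
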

\begin{proof}
This follows from Corollary~\ref{cor.hilbconstant} since
\begin{eqnarray*}
\dim_K \Phi(\cM)e_0 & = & \dim_K \Hom_{\mathbb{P}^{nc}(V)}(\mathcal{A}_0,\cM) \\
& = & \dim_K \Hom_{\mathbb{P}^{nc}(V)}(\mathcal{A}_1,\cM) \\
& = & \dim_K \Phi(\cM)e_1,
\end{eqnarray*}
\end{proof}

Putting our computations together with our structure theory for coherent sheaves Theorem~\ref{thm.Grothendieck}, Proposition~\ref{prop.torsionsplitsoff} immediately gives the next result.
\begin{corollary}  \label{cor.regularistorsion}
The full subcategory of regular $\Lambda$-modules corresponds via $\Phi$ to the full subcategory of coherent torsion sheaves on $\mathbb{P}^{nc}(V)$.
\end{corollary}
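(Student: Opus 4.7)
The plan is to prove the correspondence in both directions, exploiting that $\Phi$ is a derived equivalence and that $\mathbb{P}^{nc}(V)$ is hereditary (Corollary~\ref{cor.projAhereditary}).

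For the forward direction, I would start with a torsion coherent sheaf $\mathcal{M}$ and first show that $\Phi(\mathcal{M})$ lies in degree zero, i.e.\ is an honest $\Lambda$-module. By classical Serre duality (Theorem~\ref{thm.serre}) applied with $p=0$,
\[
\operatorname{Ext}^1_{\mathbb{P}^{nc}(V)}(\mathcal{A}_i,\mathcal{M}) \;\cong\; {}^{*}\!\operatorname{Hom}_{\mathbb{P}^{nc}(V)}(\mathcal{M},\mathcal{A}_{i+2}) \;=\; 0,
\]
since torsion sheaves admit no nonzero map to torsion-free $\mathcal{A}_{i+2}$. Hence $\Phi(\mathcal{M}) = \operatorname{Hom}(\mathcal{T},\mathcal{M})$ is concentrated in degree $0$. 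Next, I would decompose $\mathcal{M}$ into indecomposable summands $\mathcal{M} = \bigoplus_j \mathcal{M}_j$; each $\mathcal{M}_j$ is a subobject of $\mathcal{M}$ hence torsion (Proposition~\ref{prop.torsionmodules}(1) transferred to $\mathbb{P}^{nc}(V)$). The equivalence $\Phi$ sends indecomposables to indecomposables, so $\Phi(\mathcal{M}_j)$ is an indecomposable $\Lambda$-module (no shift needed, by the first step). Proposition~\ref{prop.defect0} tells us each $\Phi(\mathcal{M}_j)$ has defect zero, whence $\Phi(\mathcal{M})=\bigoplus\Phi(\mathcal{M}_j)$ is regular.

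For the converse, take a regular $\Lambda$-module $N = \bigoplus_j N_j$ with each $N_j$ indecomposable of defect zero. Since $\Phi$ is a derived equivalence and $\mathbb{P}^{nc}(V)$ is hereditary, every indecomposable object of ${\sf D}^b(\mathbb{P}^{nc}(V))$ is a shift of an indecomposable coherent sheaf. So $N_j \cong \Phi(\mathcal{M}_j)[s_j]$ for some indecomposable $\mathcal{M}_j \in {\sf Coh}\,\mathbb{P}^{nc}(V)$ and $s_j \in \mathbb{Z}$. By Proposition~\ref{prop.torsionsplitsoff} and Theorem~\ref{thm.Grothendieck}, $\mathcal{M}_j$ is either a line bundle $\mathcal{A}_i$ or an indecomposable torsion sheaf. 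The explicit computations in Section~\ref{subsec.class} show that the line bundles $\mathcal{A}_i$ produce $\Lambda$-modules (or shifted modules) of defect $\pm 1$, contradicting defect zero; hence $\mathcal{M}_j$ must be torsion. Moreover, since $N_j$ is an honest module, $s_j = 0$ (applying the forward direction to $\mathcal{M}_j$ confirms this). Then $\mathcal{M}:=\bigoplus_j \mathcal{M}_j$ is a direct sum of torsion sheaves, hence torsion, and $\Phi(\mathcal{M}) = N$.

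The main subtlety is to confirm, for the forward direction, that the noetherian torsion sheaf $\mathcal{M}$ admits an indecomposable direct sum decomposition so that Proposition~\ref{prop.defect0} can be applied summand-by-summand. I would handle this by transporting the issue across $\Phi$: $\Phi(\mathcal{M})$ is a finitely generated module over the hereditary noetherian ring $\Lambda$, and any such module admits a (finite) decomposition into indecomposables, which lifts back via the equivalence to a decomposition of $\mathcal{M}$. Once this is granted, each indecomposable summand of $\Phi(\mathcal{M})$ comes from an indecomposable torsion summand of $\mathcal{M}$, and Proposition~\ref{prop.defect0} forces defect zero, completing the proof.
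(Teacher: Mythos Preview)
Your proof is correct and follows essentially the same approach as the paper, which simply states that the result follows immediately from Theorem~\ref{thm.Grothendieck}, Proposition~\ref{prop.torsionsplitsoff}, Proposition~\ref{prop.defect0}, and the defect computations for the $\mathcal{A}_i$. You have spelled out carefully what the paper leaves implicit---in particular, that $\Phi(\mathcal{M})$ is concentrated in degree~$0$ for torsion $\mathcal{M}$, and the Krull--Schmidt decomposition needed to verify regularity summand by summand---so your write-up is in fact more complete than the paper's one-line justification.
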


We also obtain the following result of Ringel \cite[6.6 Lemma]{species}. The only difference is that his description of irregular modules are given in terms of the Coxeter functor, whereas ours are immediately read off the non-commutative symmetric algebra. A similar description is also given in \cite{dlabringel2} except that they use the preprojective algebra as opposed to our non-commutative symmetric algebra.

\begin{proposition}  \label{prop.classifyirregular}
Let $M$ be an indecomposable irregular $\Lambda$-module. Then it is isomorphic to one of the following:
$$ A_{i0} \otimes_K V \ra A_{i1} \quad \text{or} \quad
A_{0i-2}^* \otimes_K V \ra A_{1i-2}^*.$$
Any such module is uniquely determined up to isomorphism by the pair of integers $(\dim_K Me_0, \dim_K Me_1)$ which must differ by 1.
\end{proposition}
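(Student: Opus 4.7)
The plan is to transport the classification of indecomposable coherent sheaves on $\mathbb{P}^{nc}(V)$, already established by Theorem~\ref{thm.Grothendieck} together with Proposition~\ref{prop.torsionsplitsoff}, across the derived equivalence $\Phi$, and then excise the regular (torsion) contribution using Corollary~\ref{cor.regularistorsion}.

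First I would invoke the hereditary property of both $\mathbb{P}^{nc}(V)$ (Corollary~\ref{cor.projAhereditary}) and $\Lambda$, so that, as noted at the start of Section~\ref{sec.classify}, $\Phi$ associates to each indecomposable $\Lambda$-module $M$ a unique indecomposable coherent sheaf $\cM$ and integer $n$ with $M \cong \Phi(\cM)[n]$. By Proposition~\ref{prop.torsionsplitsoff} and Theorem~\ref{thm.Grothendieck}, $\cM$ is either some $\mathcal{A}_i$ or an indecomposable torsion sheaf. In the torsion case, classical Serre duality (Theorem~\ref{thm.serre}) gives $\Ext^1_{\mathbb{P}^{nc}(V)}(\mathcal{A}_j,\cM) \cong \ ^*\Hom_{\mathbb{P}^{nc}(V)}(\cM,\mathcal{A}_{j+2}) = 0$ for $j=0,1$, so $\Phi(\cM)$ is concentrated in degree zero, and Proposition~\ref{prop.defect0} then forces its defect to vanish. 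Since $M$ is irregular, this case is excluded and hence $\cM = \mathcal{A}_i$ for some $i \in \mathbb{Z}$.

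Next I would pin down the shift $n$ and the explicit module structure. Using (\ref{eqn.homs}) together with $A_{ij}=0$ for $i>j$, and (\ref{eqn.linebundles3}), one verifies: when $i \leq 1$, $\Hom(\mathcal{A}_j,\mathcal{A}_i) = A_{ij}$ is nonzero for some $j \in \{0,1\}$ while $\Ext^1(\mathcal{A}_j,\mathcal{A}_i) = A^*_{j,i-2} = 0$, so $n=0$ and $\Phi(\mathcal{A}_i)$ is already of the form $A_{i0} \otimes_K V \to A_{i1}$; whereas for $i \geq 2$, $\Hom$ vanishes in degrees $j=0,1$ and $\Ext^1 \neq 0$, so $n=1$ and $\Phi(\mathcal{A}_i)[1]$ is of the form $A_{0,i-2}^* \otimes_K V \to A_{1,i-2}^*$, with arrow the dual of multiplication $V \otimes_K A_{1,i-2} \to A_{0,i-2}$, exactly as spelled out in the paragraphs preceding Proposition~\ref{prop.defect0}.

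For uniqueness, I would apply Lemma~\ref{lemma.eulerses}(2), which gives $\dim_K A_{ij} = j-i+1$ for $j \geq i$. The dimension pair $(\dim_K Me_0, \dim_K Me_1)$ therefore equals $(1-i,2-i)$ in the first family (for $i \leq 1$) and $(i-1,i-2)$ in the second (for $i \geq 2$). As $i$ ranges over the appropriate integers, these pairs exhaust, without repetition, every pair of non-negative integers differing by exactly $1$, so the pair recovers both the family and $i$. I do not anticipate a genuine obstacle; the point requiring the most care is confirming that the isomorphism (\ref{eqn.linebundles3}) of right $K$-modules upgrades to the bimodule isomorphism needed to identify the $\Lambda$-action with the claimed multiplication/dual-multiplication map, which is precisely the content of the lemma following (\ref{eqn.linebundles3}).
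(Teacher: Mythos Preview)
Your proposal is correct and follows precisely the route the paper intends: the proposition is stated without a separate proof block because it is meant to fall out of the preceding paragraphs of Section~\ref{sec.classify} (the bijection between indecomposables via $\Phi$, the explicit computation of $\Phi(\mathcal{A}_i)$ for $i\le 1$ and $i>1$, Proposition~\ref{prop.defect0}, and Corollary~\ref{cor.regularistorsion}), together with Theorem~\ref{thm.Grothendieck} and Proposition~\ref{prop.torsionsplitsoff}. Your write-up simply makes this implicit argument explicit, including the dimension count from Lemma~\ref{lemma.eulerses}(2) for the uniqueness claim, so there is nothing to distinguish methodologically.
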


\subsection{Classification of $g$-torsion sheaves}

In this section we classify the following torsion coherent sheaves. Let $g = \{g_i\}$ be the normal family of elements defined in Section~\ref{sec.normal} and $I$ be the ideal generated by $g$. Recall a noetherian $A$-module $M$ is said to be {\em $g$-torsion} if each element is annihilated by $g^{n+1} = g_ig_{i+\delta}\ldots g_{i+n\delta}$ for sufficiently large $n$. In this case, the corresponding sheaf $\pi M$ will be said to be {\em $g$-torsion} too. Let ${\sf T}$ be the full subcategory of ${\sf Coh}\,\mathbb{P}^{nc}(V)$ consisting of $g$-torsion sheaves. It is straightforward to check that ${\sf T}$ is abelian. Our goal is to show that ${\sf T}$ is uniserial.

We omit the elementary proof of the following lemma, which we will use repeatedly without mention in the future.

\begin{lemma}
A coherent sheaf $\cM$ is $g$-torsion if and only if $\cM g^n = 0$ for some $n\geq 0$.
\end{lemma}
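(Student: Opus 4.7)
The plan is to lift $\mathcal{M}$ to a noetherian $A$-module $M$, interpret the condition $\mathcal{M}g^n=0$ as the submodule $Mg^n\subseteq M$ (the image of $M^{\phi^{-n}}(-n\delta)\xrightarrow{g^n}M$) being graded torsion, and then exploit noetherianness together with normality of $g$ to shuttle between the sheaf- and module-level statements. A preliminary observation I would record up front is that iterating the normality relation $g_iA_{i+\delta,j+\delta}=A_{ij}g_j$ shows $g^n$ is itself a normal family with shift $n\delta$, so $A_{ij}g^n_j=g^n_iA_{i+n\delta,j+n\delta}$, and consequently $(Mg^n)_j=M_{j-n\delta}\,g^n_{j-n\delta}$ for any right $A$-module $M$.

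For the forward direction, I would take a noetherian $g$-torsion $M$ with $\pi M=\mathcal{M}$ provided by the definition. Pick finitely many generators $m_1,\dots,m_k$ with $m_s\in M_{j_s}$; each $m_s$ is killed by some $g^{n_s}$, so $N:=\max_s n_s$ satisfies $m_sg^N=0$ for all $s$. Normality of $g^N$ lets us rewrite $ag^N_l=g^N_{j_s}b$ for any $a\in A_{j_s,l}$, whence $(m_sa)g^N_l=(m_sg^N_{j_s})b=0$. Since $M=\sum_sm_sA$, this gives $Mg^N=0$ as an honest submodule of $M$, and applying $\pi$ yields $\mathcal{M}g^N=0$.

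For the converse, take any noetherian lift $M$ of $\mathcal{M}$. The hypothesis says $\pi(Mg^n)=0$, so $Mg^n\in{\sf Tors}\,A$; being noetherian, this forces $Mg^n$ to be right bounded, say $(Mg^n)_j=0$ for $j\geq j_0$. Combined with $(Mg^n)_j=M_{j-n\delta}\,g^n_{j-n\delta}$, every element of $M_k$ with $k\geq j_0-n\delta$ is killed by $g^n$. For an arbitrary $m\in M_i$, repeatedly applying $g$ eventually pushes $mg^r_i$ into a high-enough degree that one more application of $g^n$ annihilates it, so $m$ is killed by $g^{r+n}$. Hence $M$ is $g$-torsion and so is $\mathcal{M}$. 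I expect no real obstacle here; the argument is a transparent transcription of the standard graded situation, with the only care being $\mathbb{Z}$-indexed degree bookkeeping, which is controlled entirely by normality of $g$ and its iterates.
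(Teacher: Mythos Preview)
Your argument is correct and is precisely the elementary proof the paper elects to omit. The only points worth making explicit are that $Mg^n$ is noetherian (as a submodule of the noetherian $M$) so that graded torsion really does force right boundedness, and that $\pi$ being exact gives $\mathcal{M}g^n=\pi(Mg^n)$; you use both implicitly and they are routine.
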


Let $B = A/I$ be the twisted ring studied in Section~\ref{section.twisted}. We saw that its ring of fractions $C$ is in fact the full twisted ring. In fact, the embedding $B \hookrightarrow C$ is an isomorphism in positive degrees. Furthermore, if $\delta = \deg g$, then $\dim_K C_{ij} = \delta$ both as a left and right vector space. We wish first to study the $g$-torsion sheaf $\mathcal{B}_i := \pi(e_iB) = \mathcal{A}_i/\mathcal{A}_ig$. The first result about this sheaf is the following.

\begin{proposition}  \label{prop.Bisimple}
$\omega \mathcal{B}_i = e_i C$. In particular, $\mathcal{B}_i$ is simple.
\end{proposition}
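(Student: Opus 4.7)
The plan rests on three ingredients: a short exact sequence presenting $\mathcal{B}_i$, an identification $\mathcal{B}_i = \pi(e_iC)$, and a dimension comparison.

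First, left multiplication by the normal element $g_i \in A_{i,i+\delta}$ defines a right $A$-module morphism $e_{i+\delta}A \to e_iA$ which is injective by Proposition~\ref{prop:gisnzd}; by normality (Proposition~\ref{prop.normalfamily}), its image in degree $n$ equals $g_iA_{i+\delta,n} = A_{i,n-\delta}g_{n-\delta} = I_{in}$, so the cokernel is $e_iB$. Applying the exact functor $\pi$ yields
\[
0 \to \mathcal{A}_{i+\delta} \xrightarrow{g_i} \mathcal{A}_i \to \mathcal{B}_i \to 0
\]
in $\mathbb{P}^{nc}(V)$.

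Next, since $B$ equals the full twisted ring $C$ in positive degrees (Theorem~\ref{thm.Anoeth}), the cokernel of $e_iB \hookrightarrow e_iC$ is supported in a bounded range of degrees $\leq i$, hence graded torsion; thus $\mathcal{B}_i = \pi(e_iC)$. Simplicity of $\mathcal{B}_i$ then reduces to showing every nonzero graded $A$-submodule $N \subseteq e_iC$ is cofinite: given $0 \neq c \in N \cap C_{ij}$, the surjection $A_{jk} \twoheadrightarrow B_{jk} = C_{jk}$ (for $k$ large) composed with multiplication by $c$ yields $N \supseteq c\cdot C_{jk} = C_{ik}$, since $C$ is a full twisted ring on a field so multiplication by a nonzero element is an isomorphism. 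Consequently $e_iC$ has no nonzero graded torsion submodules, and the adjunction unit $e_iC \to \omega\pi(e_iC) = \omega\mathcal{B}_i$ is injective.

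It remains to verify this injection is an isomorphism, which I would do by comparing graded dimensions. One has $\dim (e_iC)_n = \delta$ in every degree. To compute $\dim(\omega\mathcal{B}_i)_n$, apply $\omega$ to the sequence above and invoke heredity of $\mathbb{P}^{nc}(V)$ (Corollary~\ref{cor.projAhereditary}) to obtain the six-term exact sequence
\[
0 \to e_{i+\delta}A \xrightarrow{g_i} e_iA \to \omega\mathcal{B}_i \to R^1\omega\mathcal{A}_{i+\delta} \xrightarrow{g_i} R^1\omega\mathcal{A}_i \to R^1\omega\mathcal{B}_i \to 0.
\]
Using $(R^1\omega\mathcal{A}_j)_n = A^*_{n,j-2}$ from~\eqref{eqn.linebundles3} together with the functoriality of Serre duality (Theorem~\ref{thm.serre}), the map induced by $g_i$ on $R^1\omega$ corresponds to the $K$-dual of right multiplication $A_{n+2,i} \xrightarrow{\cdot g_i} A_{n+2,i+\delta}$, which is injective by Proposition~\ref{prop:gisnzd}. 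Thus this dual map is surjective, $R^1\omega\mathcal{B}_i = 0$, and the kernel in degree $n$ equals ${}^*B_{n+2,i+\delta}$ (using normality to identify the cokernel of right multiplication by $g_i$ with $B_{n+2,i+\delta}$). A case-by-case check then gives $\dim(\omega\mathcal{B}_i)_n = \dim B_{in} + \dim B_{n+2,i+\delta} = \delta$ for every $n$. The main delicacy is the transitional degree $n = i + \delta - 2$ in the simple case, where both summands contribute $1$ (coming from diagonal entries $B_{jj} = K_j \subsetneq F = C_{jj}$) rather than a single contribution of $\delta$, but the total still balances to $\delta$ so the injection is an isomorphism, completing the identification $\omega\mathcal{B}_i = e_iC$.
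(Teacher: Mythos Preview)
Your proof is correct and follows the same overall architecture as the paper's: identify $\mathcal{B}_i$ with $\pi(e_iC)$, observe that $e_iC$ has no graded torsion so it embeds in $\omega\mathcal{B}_i$, deduce simplicity from the fact that any nonzero element of $e_iC$ generates it in high degree (this is exactly the paper's argument), and then compare Hilbert functions to upgrade the embedding to an isomorphism.

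The genuine difference lies in how the Hilbert function comparison is carried out. The paper simply invokes Corollary~\ref{cor.hilbconstant}: since $\mathcal{B}_i$ is a torsion coherent sheaf, $f_{\omega\mathcal{B}_i}$ is constant, and that constant is seen to equal $\delta$ in large degrees, matching $f_{e_iC}$. You instead bypass Corollary~\ref{cor.hilbconstant} and compute $\dim(\omega\mathcal{B}_i)_n$ directly in \emph{every} degree via the six-term sequence for $\omega$ applied to $0\to\mathcal{A}_{i+\delta}\to\mathcal{A}_i\to\mathcal{B}_i\to 0$, together with the explicit identification of the $R^1\omega$ map as the dual of right multiplication by $g_i$ (via naturality of Serre duality). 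Your degree-by-degree check, including the transitional degree, is correct. What you gain is a more self-contained argument that does not pass through the Macaulay property used in Corollary~\ref{cor.hilbconstant}; what the paper gains is brevity, since the same Serre-duality input has already been packaged into that corollary and can be quoted in one line.
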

\begin{proof}
From the structure of $C$ as a full twisted ring, we see that $\underline{\Hom}_A(\bar{A}, e_iC) = 0$ so $e_iC$ embeds in $\omega \mathcal{B}_i$. Equality follows by comparing Hilbert functions using the fact that $f_{\omega \mathcal{B}_i}$ is constant by Corollary~\ref{cor.hilbconstant}.

To prove simplicity when $V$ is non-simple, note that $\delta =1$ so the Hilbert function is the constant 1 and we are done. When $V$ is simple, then from the multiplication formula (\ref{eqn.muC}) for $C$, we see that any non-zero element of $e_iC$ generates $e_iC$ in sufficiently high degree. Hence $\mathcal{B}_i$ is simple.
\end{proof}

This easily gives the endomorphism rings of the $\mathcal{B}_i$.

\begin{cor}  \label{cor.endBi}
If $\delta = 1$ then $\End \mathcal{B}_i = K_i$. If $\delta = 2$ then $\End \mathcal{B}_i = F$ where $F$ is the field defined in Lemma~\ref{lemma.twosidedclass}(2).
\end{cor}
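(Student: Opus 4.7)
The plan is to pass through the $(\pi,\omega)$-adjunction and reduce to a calculation in ${\sf Gr}\,A$. Proposition~\ref{prop.Bisimple} gives $\omega\mathcal{B}_i = e_iC$, and since the counit $\pi\omega \to \operatorname{id}$ is an isomorphism on $\mathbb{P}^{nc}(V)$, we have $\mathcal{B}_i \cong \pi(e_iC)$. Adjointness therefore yields
$$
\End_{\mathbb{P}^{nc}(V)}\mathcal{B}_i \;\cong\; \Hom_{{\sf Gr}\,A}(e_iC,\,e_iC).
$$
Left multiplication by $C_{ii}$ on $e_iC$ commutes with the right $A$-action and gives a ring map $\lambda: C_{ii} \to \End_{{\sf Gr}\,A}(e_iC)$; the bulk of the proof is to show $\lambda$ is an isomorphism.

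Faithfulness of $\lambda$ is immediate from $\lambda(c)(e_i) = c$. For surjectivity, given $f \in \End_{{\sf Gr}\,A}(e_iC)$, set $c := f(e_i) \in C_{ii}$; the goal is to show $f(y) = cy$ for all homogeneous $y \in e_iC$. For $y \in (e_iC)_j$ with $j > i$, the equality $B_{ij} = C_{ij}$ (from Lemma~\ref{lemma.AmodI} in the simple case, Section~\ref{section.easytwist} in the non-simple case) together with the surjection $A_{ij} \twoheadrightarrow B_{ij}$ lets one write $y = e_i \cdot a$ for some $a \in A_{ij}$, and $A$-linearity gives $f(y) = ca = cy$. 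For $y \in (e_iC)_j$ with $j \leq i$, choose some $k > i$ and $a \in A_{jk}$ whose image $\bar a \in C_{jk}$ is nonzero (possible by the same surjectivity); applying the previous case to $y\bar a \in (e_iC)_k$ gives $f(y\bar a) = c(y\bar a) = (cy)\bar a$, while $A$-linearity gives $f(y)\bar a = f(y\bar a)$. Hence $(f(y) - cy)\bar a = 0$ in $C_{ik}$, and since nonzero elements of the full twisted ring are non-zero-divisors (visible from the multiplication formula~(\ref{eqn.muC})), we conclude $f(y) = cy$.

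The proof concludes by reading off $C_{ii}$ from the construction of $C$ in Section~\ref{section.twisted}: when $V$ is non-simple ($\delta = 1$), $C$ is the full twisted ring on $K$, so $C_{ii} = K = K_i$; when $V$ is simple ($\delta = 2$), $C$ is the full twisted ring on the degree-two extension $F$ of Lemma~\ref{lemma.twosidedclass}(2), giving $C_{ii} = F$. The main obstacle is the $j \leq i$ step above: in these low degrees the element $e_i$ does not generate $(e_iC)_j$ under the $A$-action (in the simple case this already fails in degree $j=i$, since $B_{ii} = K_i \subsetneq F = C_{ii}$), so one must leverage $A$-equivariance at degrees $k > i$ together with the non-zero-divisor structure of the full twisted ring $C$ to pin down $f$.
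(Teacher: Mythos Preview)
Your proof is correct, but the paper takes a shorter route. Both arguments ultimately identify $\End\,\mathcal{B}_i$ with $C_{ii}$, but the paper does so by a dimension squeeze rather than by explicitly pinning down every endomorphism. Precomposition with the surjection $\mathcal{A}_i \twoheadrightarrow \mathcal{B}_i$ injects $\End\,\mathcal{B}_i$ into $\Hom(\mathcal{A}_i,\mathcal{B}_i) = (\omega\mathcal{B}_i)_i = C_{ii}$, giving the upper bound $\dim_K \End\,\mathcal{B}_i \leq \delta$; left multiplication by $K$ (resp.\ $F$) on $e_iB$ (resp.\ $e_iC$) then gives the matching lower bound. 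Your approach instead computes $\End_{{\sf Gr}\,A}(e_iC)$ directly, replacing the dimension count by the non-zero-divisor property of the full twisted ring $C$ to handle the low-degree components where $e_i$ fails to generate. The paper's argument is quicker because the cohomology formula~(\ref{eqn.cohom}) has already done the work; yours is more self-contained and would survive in settings where one lacks such a formula but still knows $\omega\mathcal{B}_i$ explicitly.
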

\begin{proof}
First note the following inequality of right vector space dimensions
$$\dim_K \End\, \mathcal{B}_i \leq \dim_K \Hom(\mathcal{A}_i, \mathcal{B}_i) = \delta.$$
When $\delta = 1$, then left multiplication by $K$ on $e_i B$ induces an embedding $K \hookrightarrow \End\,\mathcal{B}_i$ which is an isomorphism by the inequality above. When $\delta = 2$, then left multiplication by $F$ on $e_i C$ gives an embedding $F \hookrightarrow \End\, \mathcal{B}_i$ which again is an isomorphism as before.
\end{proof}

\begin{proposition}  \label{prop.Biuniquesimple}
Let $i \in \mathbb{Z}$.
\begin{enumerate}
\item Let $\cM\neq 0$ be a coherent sheaf with $\cM g =0$. Then there is a non-zero embedding $\mathcal{B}_i \hookrightarrow \cM$.
\item $\mathcal{B}_i \cong \mathcal{B}_j$ for any $j \in \mathbb{Z}$.
\item In particular, $\mathcal{B}_0$ is the unique simple $g$-torsion sheaf.
\end{enumerate}
\end{proposition}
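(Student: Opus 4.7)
My plan is to attack part (1) directly using the adjunction $\pi \dashv \omega$ to compute $\Hom_{\mathbb{P}^{nc}(V)}(\mathcal{B}_i,\cM)$, and then derive parts (2) and (3) as essentially formal consequences of (1) combined with the simplicity of $\mathcal{B}_i$ proved in Proposition~\ref{prop.Bisimple}.

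For (1), I would set $M := \omega\cM$. The first key observation is that the hypothesis $\cM g = 0$ forces $Mg = 0$: indeed, $Mg$ is a submodule of $M$, and $\pi(Mg) = \cM g = 0$ shows $Mg$ is graded torsion, whereas $M = \omega\cM$ contains no graded-torsion submodules by construction of the section functor. Since $\mathcal{B}_i = \pi(e_iA/g_iA)$, the adjunction then yields
$$\Hom_{\mathbb{P}^{nc}(V)}(\mathcal{B}_i,\cM) \cong \Hom_{{\sf Gr}\,A}(e_iA/g_iA,\, M) \cong \{x \in M_i : xg_i = 0\} = M_i,$$
the final equality using $Mg=0$. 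Next, $\cM$ is a torsion coherent sheaf, since $g$ becomes invertible in the ring of fractions $Q$ of $A$ while $Mg = 0$, forcing $M \otimes_A Q = 0$. Corollary~\ref{cor.hilbconstant} then says $f_{M}$ is constant; since $\cM \neq 0$, this constant is positive, so $M_i \neq 0$ for every $i$. Finally, the simplicity of $\mathcal{B}_i$ upgrades any non-zero element of $\Hom(\mathcal{B}_i,\cM)$ to an embedding.

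Part (2) follows immediately: applying (1) to $\cM = \mathcal{B}_j$ (which satisfies $\mathcal{B}_j g = 0$ since it is a $B$-module) produces a non-zero map $\mathcal{B}_i \hookrightarrow \mathcal{B}_j$ between simples, which must then be an isomorphism. For (3), let $\cM$ be any simple $g$-torsion sheaf. Simplicity forces $\cM g \in \{0,\cM\}$; the second alternative would give $\cM = \cM g^n$ for every $n$, contradicting the $g$-torsion hypothesis, so $\cM g = 0$, and part (1) together with simplicity of both $\mathcal{B}_0$ and $\cM$ yields $\mathcal{B}_0 \cong \cM$.

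The only genuinely subtle step is the identification $Mg = 0$ in part (1), which rests on the standard but crucial fact that $\omega\cM$ has no graded-torsion submodules. Once this is pinned down, the rest of the argument is essentially formal, leveraging the simplicity of $\mathcal{B}_i$ and the constancy of Hilbert functions on torsion sheaves.
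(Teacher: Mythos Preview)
Your proof is correct and follows essentially the same approach as the paper's: both use Corollary~\ref{cor.hilbconstant} to ensure $(\omega\cM)_i \neq 0$, observe that any resulting map $e_iA \to \omega\cM$ factors through $e_iB$ because $g$ kills $\omega\cM$, and then invoke the simplicity of $\mathcal{B}_i$ from Proposition~\ref{prop.Bisimple}. Your argument is in fact slightly more explicit---you spell out why $(\omega\cM)g = 0$ (via $\tau\omega = 0$) and why $\cM$ is torsion before invoking Corollary~\ref{cor.hilbconstant}, whereas the paper leaves these implicit---and your justification in part (3) for why $\cM g \neq \cM$ (else $\cM = \cM g^n = 0$) fills in what the paper asserts without comment.
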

\begin{proof}
We first prove part (1). Corollary~\ref{cor.hilbconstant} shows that the Hilbert function $f_{\omega \cM}$ is constant non-zero so there is a non-zero homomorphism $\phi:e_iA \ra \omega \cM$. Now $g$ annihilates $\phi(e_i)$ so $\phi$ factors through a non-zero $e_iB \ra \omega \cM$. The corresponding morphism of sheaves $\mathcal{B}_i \ra \cM$ is injective since $\mathcal{B}_i$ is simple by Proposition~\ref{prop.Bisimple}. This proves part (1) and part (2) follows.

We now prove part (3). Let $\cM$ be a simple $g$-torsion sheaf. Now $\cM g$ is a proper submodule so must be 0 by simplicity. Part (1) gives a non-zero embedding of $\mathcal{B}_0$ into $\cM$ so we are done.
\end{proof}

We can now prove the main results of this subsection.
\begin{cor} \label{cor.uniserial}
The category ${\sf T}$ is uniserial.
\end{cor}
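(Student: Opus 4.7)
My plan is to identify every indecomposable in ${\sf T}$ with one of the uniserial sheaves $\mathcal{U}_i^n := \mathcal{A}_i/\mathcal{A}_ig^n$, reducing uniseriality of ${\sf T}$ to the more concrete claim about these objects. The first step is to show each $\mathcal{U}_i^n$ is uniserial of length $n$ with every composition factor isomorphic to $\mathcal{B}_0$: using that $g$ is a non-zero-divisor (Proposition~\ref{prop:gisnzd}), the $g$-adic filtration
\[
\mathcal{U}_i^n \supset \mathcal{A}_ig/\mathcal{A}_ig^n \supset \cdots \supset \mathcal{A}_ig^{n-1}/\mathcal{A}_ig^n \supset 0
\]
has successive quotients $\mathcal{A}_ig^j/\mathcal{A}_ig^{j+1}$ each isomorphic to a twist of $\mathcal{B}_{i+j\delta}$, hence to $\mathcal{B}_0$ by Proposition~\ref{prop.Biuniquesimple}(2). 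Moreover, $\bar{x}g=0$ in $\mathcal{U}_i^n$ forces $x\in\mathcal{A}_ig^{n-1}$, so the socle is the single simple sheaf $\mathcal{A}_ig^{n-1}/\mathcal{A}_ig^n\cong\mathcal{B}_0$. Induction on $n$ via $\mathcal{U}_i^n/\operatorname{soc}\cong\mathcal{U}_i^{n-1}$ then shows every non-zero subsheaf contains the socle, and the subsheaves of $\mathcal{U}_i^n$ form the chain coming from the $g$-adic filtration.

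The second step is that every $\mathcal{M}\in{\sf T}$ is a quotient of some $(\mathcal{U}_i^n)^{\oplus m}$: by Theorem~\ref{thm.serrevanish} the $\{\mathcal{A}_i\}$ generate ${\sf Coh}\,\mathbb{P}^{nc}(V)$, giving a surjection $\mathcal{A}_i^{\oplus m}\twoheadrightarrow\mathcal{M}$, and since $\mathcal{M}$ is noetherian and $g$-torsion, $\mathcal{M}g^n=0$ for some $n$, so the surjection factors through $(\mathcal{U}_i^n)^{\oplus m}$.

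The main obstacle is the concluding step: showing every indecomposable quotient of $(\mathcal{U}_i^n)^{\oplus m}$ is itself isomorphic to some $\mathcal{U}_0^j$ and hence uniserial. The intended approach is to prove that the endomorphism ring $R_n:=\End_{\sf T}(\mathcal{U}_0^n)^{op}$ is a principal local Artinian ring of length $n$: by the first step its unique maximal ideal consists of the non-isomorphisms, and one must verify this ideal is generated by a single element---namely, the endomorphism induced by right multiplication by $g$, reinterpreted as an endomorphism via the twist auto-equivalence of Proposition~\ref{prop.preprojective}. A tilting-type equivalence $\Hom_{\sf T}(\mathcal{U}_0^n,-)$ then identifies the subcategory of ${\sf T}$ annihilated by $g^n$ with finitely generated right $R_n$-modules; since every such module decomposes as a direct sum of cyclic uniserials $R_n/(g^j)\cong\mathcal{U}_0^j$, every indecomposable in ${\sf T}$ is uniserial.
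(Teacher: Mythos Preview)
Your first two steps are fine, but step~3 has a genuine gap. The paper's route is much shorter: invoke Gabriel's criterion \cite[8.3]{gabriel}, which reduces uniseriality to three facts---finite length, a unique simple $\mathcal{S}=\mathcal{B}_0$, and $\Ext^1(\mathcal{B}_0,\mathcal{B}_0)$ being one-dimensional over $\End(\mathcal{B}_0)$ on each side. The first two are already done (Proposition~\ref{prop.Biuniquesimple} and the constant Hilbert function), and the third follows immediately from the Serre-type duality of Proposition~\ref{prop.SerreAmodg}, which gives $\Ext^1(\mathcal{B}_0,\mathcal{B}_0)\cong\Hom(\mathcal{B}_0,\mathcal{B}_0)^*$, together with the computation of $\End(\mathcal{B}_0)$ in Corollary~\ref{cor.endBi}.

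Your approach instead tries to realize $\{\cM:\cM g^n=0\}$ as ${\sf mod}\,R_n$ with $R_n=\End(\mathcal{U}_0^n)^{op}$ principal local, but neither key claim is justified. First, the proposed generator: right multiplication by $g$ is a map $(\mathcal{U}_0^n)^{\phi^{-1}}(-\delta)\to\mathcal{U}_0^n$ where $\phi$ is \emph{conjugation by $g$} (Section~\ref{sec.Serre}), not the degree-two automorphism of Proposition~\ref{prop.preprojective}; untwisting, this is the map $g_0\colon\mathcal{U}_\delta^n\to\mathcal{U}_0^n$. To promote it to an endomorphism you need $\mathcal{U}_\delta^n\cong\mathcal{U}_0^n$, which you have not shown---and since both are uniserial of length $n$ with all factors $\mathcal{B}_0$, the only way to identify them is to know that non-trivial extensions of $\mathcal{B}_0$ by $\mathcal{B}_0$ are unique up to isomorphism, i.e.\ exactly the one-dimensionality of $\Ext^1(\mathcal{B}_0,\mathcal{B}_0)$ that the paper computes directly. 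Even granting the endomorphism, proving it \emph{generates} the maximal ideal again comes down to controlling extensions.

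Second, the Morita step: for $\Hom(\mathcal{U}_0^n,-)$ to give an equivalence you need $\mathcal{U}_0^n$ to be projective in $\{\cM:\cM g^n=0\}$. This subcategory is closed under subobjects and quotients but \emph{not} extensions, so its $\Ext^1$ differs from that of ${\sf T}$, and you give no argument that the relevant $\Ext^1$ vanishes. In short, both obstacles in step~3 unwind to the same $\Ext^1(\mathcal{B}_0,\mathcal{B}_0)$ computation; once you have that, Gabriel's criterion finishes the proof in one line, making the Morita machinery unnecessary.
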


\begin{proof}
By \cite[8.3]{gabriel}, it suffices to show that each object of ${\sf T}$ has finite length, ${\sf T}$ has a unique simple object $\mathcal{S}$, and $\operatorname{Ext}^{1}(\mathcal{S},\mathcal{S})$ is one-dimensional as a left and right $\operatorname{End}(\mathcal{S})$-module.

We know from Proposition~\ref{prop.Biuniquesimple}(3) that $\mathcal{S} = \mathcal{B}_0$ is the unique simple in ${\sf T}$. From Serre duality (Proposition~\ref{prop.SerreAmodg}) and Proposition~\ref{prop.Biuniquesimple}(2), we have an isomorphism
$$\Ext^1(\mathcal{B}_0,\mathcal{B}_0) \cong \,\Hom(\mathcal{B}_0,\mathcal{B}_0)^*$$
of $(\End\,\mathcal{B}_0)-K$-bimodules. The calculation of $\End\,\mathcal{B}_0$ in Corollary~\ref{cor.endBi} then shows that $\Ext^1(\mathcal{B}_0,\mathcal{B}_0)$ must be one-dimensional as a left and right $\End\,\mathcal{B}_0$-module.

Now ${\sf T}$ is a finite length category since the length of any filtration of $\cM \in {\sf T}$ is bounded by the constant $f_{\omega \cM}$.
\end{proof}

\subsection{Classification of torsion sheaves}

From the point of view of non-commutative algebraic geometry, we can view our non-commutative projective line $\mathbb{P}^{nc}(V)$ as having a commutative subscheme consisting of a point with defining equation $g=0$ and an affine complement $\operatorname{Spec} A[g^{-1}]_{00}$. This decomposition allows us  to split the category of torsion sheaves mirroring Ringel's classification or regular $\Lambda$-modules in the case where $V$ is non-algebraic but non-simple. In this subsection we establish this splitting and identify the components.

If $M$ is $g$-torsion-free then we will say that $\pi M$ is {\em $g$-torsion-free}. We let ${\sf F}$ be the full subcategory of ${\sf Coh}\, \mathbb{P}^{nc}(V)$ consisting of torsion sheaves which are $g$-torsion free.

Let $M$ be a noetherian $A$-module and $M'$ be its $g$-torsion submodule. Then we have an exact sequence
$$ 0 \ra M' \ra M \ra M/M' \ra 0 $$
where $M/M'$ is $g$-torsion-free. This gives an analogous exact sequence of sheaves
\begin{equation}  \label{eqn.torsionseq}
0 \ra \cM' \ra \cM \ra \cM / \cM' \ra 0
\end{equation}
where $\cM  = \pi M, \cM' = \pi M'$.

\begin{thm}  \label{thm.splitcategory}
The category of torsion coherent sheaves on $\mathbb{P}^{nc}(V)$ is equivalent to the product category ${\sf T \times F}$.
\end{thm}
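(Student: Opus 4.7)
The plan is to define the functor $F\colon {\sf torsion} \to {\sf T} \times {\sf F}$ by $\mathcal{M} \mapsto (\mathcal{M}', \mathcal{M}/\mathcal{M}')$, where $\mathcal{M}'$ denotes the maximal $g$-torsion subsheaf of $\mathcal{M}$. This is functorial because the $g$-torsion sheaves form a Serre subcategory, and essential surjectivity is immediate since $F(\mathcal{L} \oplus \mathcal{N}) \cong (\mathcal{L}, \mathcal{N})$ for $\mathcal{L} \in {\sf T}$ and $\mathcal{N} \in {\sf F}$. The remaining task is full-faithfulness. If I can establish the three vanishings
$$\Hom(\mathcal{L}, \mathcal{N}) = \Hom(\mathcal{N}, \mathcal{L}) = \Ext^1(\mathcal{N}, \mathcal{L}) = 0 \quad \text{for } \mathcal{L}\in{\sf T},\ \mathcal{N}\in{\sf F},$$
then the defining short exact sequence of $\mathcal{M}$ splits, and applying $\Hom$ in both variables collapses the off-diagonal terms, leaving exactly $\Hom(\mathcal{M}_1, \mathcal{M}_2) \cong \Hom(\mathcal{M}_1', \mathcal{M}_2') \oplus \Hom(\mathcal{M}_1/\mathcal{M}_1', \mathcal{M}_2/\mathcal{M}_2')$.

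The first vanishing is trivial: the image of any such morphism is both $g$-torsion and a subsheaf of a $g$-torsion-free sheaf. For the other two, I use uniseriality of ${\sf T}$ (Cor~\ref{cor.uniserial}) and the fact that $\mathcal{B}_0$ is the unique simple object there (Prop~\ref{prop.Biuniquesimple}) to reduce, by induction on the length of $\mathcal{L}$, to the case $\mathcal{L} = \mathcal{B}_0$. For $\Ext^1(\mathcal{N}, \mathcal{B}_0) = 0$, Serre duality (Prop~\ref{prop.SerreAmodg}) with $i = -2$ identifies $\Ext^1(\mathcal{N}, \mathcal{B}_0)$ with the left dual of $\Hom(\mathcal{B}_{-2}, \mathcal{N})$, and this latter Hom vanishes because $\mathcal{B}_{-2} \cong \mathcal{B}_0$ is $g$-torsion while $\mathcal{N}$ is $g$-torsion-free.

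The main obstacle is the Hom vanishing $\Hom(\mathcal{N}, \mathcal{B}_0) = 0$, which I will deduce from the preliminary claim that $\mathcal{N}$ is $g$-divisible, i.e., $\mathcal{N}g = \mathcal{N}$. To prove $g$-divisibility, I will compute Hilbert polynomials: by Cor~\ref{cor.hilbconstant}, $f_{\omega\mathcal{N}}$ is eventually a constant $c$, and the twist $\mathcal{N}^{\phi^{-1}}(-\delta)$ has Hilbert function $f_{\omega\mathcal{N}}(n-\delta)$, which also equals $c$ in large degree. Exactness of the Euler characteristic applied to
$$0 \to \mathcal{N}^{\phi^{-1}}(-\delta) \xrightarrow{g} \mathcal{N} \to \mathcal{N}/\mathcal{N}g \to 0$$
then forces $\chi(\mathcal{N}/\mathcal{N}g, n) = 0$ for $n \gg 0$, so $\omega(\mathcal{N}/\mathcal{N}g)$ is graded torsion and hence $\mathcal{N}/\mathcal{N}g = 0$. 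Given $g$-divisibility, any $\phi\colon \mathcal{N} \to \mathcal{B}_0$ fits into a naturality square for the $g$-multiplication maps whose bottom arrow is zero (as $\mathcal{B}_0 g = 0$) and whose top arrow is surjective (as $\mathcal{N}g = \mathcal{N}$), forcing $\phi = 0$ and completing the proof.
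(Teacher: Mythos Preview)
Your proof is correct and follows essentially the same route as the paper. Both reduce the $\Ext^1$ vanishing to $\mathcal{L}=\mathcal{B}_0$ via uniseriality and then invoke Proposition~\ref{prop.SerreAmodg} together with $\mathcal{B}_{-2}\cong\mathcal{B}_0$; and both deduce $\Hom(\mathcal{N},\mathcal{B}_0)=0$ from $g$-divisibility of $\mathcal{N}$. The only cosmetic difference is that the paper establishes $\mathcal{N}g=\mathcal{N}$ at the module level (showing multiplication by $g$ on $\omega\mathcal{N}$ is bijective directly from the constant Hilbert function), whereas you argue at the sheaf level via additivity of $\chi$; note in passing that Corollary~\ref{cor.hilbconstant} actually gives $f_{\omega\mathcal{N}}$ constant for \emph{all} $n$, not just eventually, which slightly streamlines your step.
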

\begin{proof}
We first show that the exact sequence(\ref{eqn.torsionseq}) splits. This amounts to showing that $\Ext^1(\mathcal{N}, \cM') = 0$ for any $g$-torsion coherent sheaf $\cM'$ and $g$-torsion free coherent sheaf $\mathcal{N}$. From Corollary~\ref{cor.uniserial} we know that $\cM'$ is an iterated extension of $\mathcal{B}_0 = \mathcal{A}_0/\mathcal{A}_0g$ with itself. Hence it suffices to show that $\Ext^1(\mathcal{N}, \mathcal{B}_0) = 0$. Now $\Hom({\sf T, F}) = 0$ so Proposition \ref{prop.SerreAmodg} and Proposition \ref{prop.Biuniquesimple}(2) give
$$\Ext^1(\mathcal{N}, \mathcal{B}_0) = \Hom(\mathcal{B}_0, \mathcal{N})^* = 0.$$
It follows that the sequence (\ref{eqn.torsionseq}) splits and every torsion coherent sheaf is a direct sum of a $g$-torsion sheaf with a $g$-torsion free sheaf.

It remains only to show that $\Hom({\sf F, T}) = 0$, or equivalently, that $\Hom({\sf F, \mathcal{B}_0}) = 0$. Let $\mathcal{N}$ be a $g$-torsion free torsion coherent sheaf as before. Then $N = \omega \mathcal{N}$ is also $g$-torsion free so the multiplication by $g$ map  $N^{\phi^{-1}}(-\delta) \ra N$ is injective. It is also surjective since the Hilbert function $f_N$ is constant by Corollary~\ref{cor.hilbconstant} and we see $N/Ng = 0$. Now any homomorphism $N \ra \omega \mathcal{B}_0$ factors through $N/Ng$ so we are done.
\end{proof}
In the course of the proof above, we established the following fact.

\begin{lemma}  \label{lemma.omegagtorfree}
Let $\mathcal{N}$ be a $g$-torsion free torsion coherent sheaf on $\mathbb{P}^{nc}(V)$ and $N = \omega \mathcal{N}$. Then multiplication by $g$ on $N$ is a vector space isomorphism.
\end{lemma}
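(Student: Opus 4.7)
The plan is to extract the statement from the argument sketched near the end of the proof of Theorem~\ref{thm.splitcategory}. We must verify two things: that multiplication by $g$ on $N$ is injective, and that it is surjective.

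First, I would check that $N = \omega\mathcal{N}$ is itself $g$-torsion free. By hypothesis $\mathcal{N}$ is $g$-torsion free, so the natural map $\mathcal{N}^{\phi^{-1}}(-\delta) \xrightarrow{g} \mathcal{N}$ from Section~\ref{sec.Serre} is a monomorphism in $\mathbb{P}^{nc}(V)$. Applying the section functor $\omega$, which is left exact as the right adjoint to $\pi$, I obtain an injection $N^{\phi^{-1}}(-\delta) \hookrightarrow N$; this is precisely the assertion that right multiplication by $g$ on $N$ is injective.

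Next, I would invoke Corollary~\ref{cor.hilbconstant}, which tells us that the Hilbert function $f_N(n) := \dim_{K_n} N_n$ is constant, equal to some nonnegative integer $d$. Now for each $i \in \mathbb{Z}$, right multiplication by $g_i \in A_{i,i+\delta}$ gives a map $N_i \to N_{i+\delta}$ which is additive and left $K_i$-linear (using the identification $K_{i+\delta} = K_i$ from Notation~\ref{not.Ki}, and the fact that left multiplication by $K_i = A_{ii}$ commutes with right multiplication by $g_i$). By the previous paragraph it is an injection between $K_i$-vector spaces of the same finite dimension $d$, and therefore a bijection. Taking direct sums over all $i$, multiplication by $g$ is a vector space isomorphism on $N$, as required.

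No step poses a serious obstacle: the subtlest point is translating the $g$-torsion-freeness of $\mathcal{N}$ to that of $N$ via the left exactness of $\omega$, and from there it is purely a finite-dimensional linear algebra argument once the constant Hilbert function from Corollary~\ref{cor.hilbconstant} is in hand.
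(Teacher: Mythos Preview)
Your approach matches the paper's (which extracts this lemma from the proof of Theorem~\ref{thm.splitcategory}): left-exactness of $\omega$ gives injectivity of multiplication by $g$ on $N$, and the constant Hilbert function from Corollary~\ref{cor.hilbconstant} forces surjectivity by a dimension count.

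One small correction to your linearity justification: $N$ is a \emph{right} $A$-module, so $N_i$ carries only a right $K_i$-action and there is no ``left multiplication by $K_i = A_{ii}$'' on $N_i$ to invoke. The clean way to phrase the dimension argument is that $N^{\phi^{-1}}(-\delta) \xrightarrow{g} N$ is an $A$-module morphism (equation~(\ref{eqn.gmult})), hence in each degree a right $K$-linear map between spaces of the same finite dimension $d$; alternatively, right multiplication by $g_i$ is $\phi$-semilinear (where $\phi$ is conjugation by $g$), and an injective semilinear map between finite-dimensional spaces of equal dimension is bijective. With that fix, your argument is complete and essentially identical to the paper's.
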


We finish this subsection by identifying ${\sf F}$ with a module category. To this end, consider the $\mathbb{Z}$-indexed algebra $A[g^{-1}]$. Let ${\sf Gr} A[g^{-1}]$ be the category of graded $A[g^{-1}]$-modules and ${\sf gr} A[g^{-1}]$ be its full subcategory of noetherian objects. Now $A[g^{-1}]$ is strongly graded, so we immediately obtain the following.

\begin{lemma}  \label{lemma.Aginvstrong}
There is a category equivalence ${\sf mod}\, A[g^{-1}]_{00} \cong {\sf gr}\, A[g^{-1}]$.
\end{lemma}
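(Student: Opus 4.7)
The plan is to invoke a $\mathbb{Z}$-indexed analogue of Dade's theorem for strongly graded rings. The key input is strong gradedness of $A[g^{-1}]$ in the indexed sense: for every $i,j \in \mathbb{Z}$ one has $A[g^{-1}]_{ij}\cdot A[g^{-1}]_{ji} = A[g^{-1}]_{ii}$, equivalently each $e_i A[g^{-1}] e_j$ is an invertible $A[g^{-1}]_{ii}$-$A[g^{-1}]_{jj}$-bimodule.

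First I would verify strong gradedness. By construction of localization at the normal family $g=\{g_i\}$ of non-zero-divisors (Proposition~\ref{prop:gisnzd}, Proposition~\ref{prop.normalfamily}), each $g_i$ is two-sided invertible in $A[g^{-1}]$ with inverse $g_i^{-1} \in A[g^{-1}]_{i+\delta,i}$. Iterating these units and their inverses produces units in $A[g^{-1}]_{i,j}$ whenever $j-i \in \delta\mathbb{Z}$, which settles everything in the non-simple case $\delta=1$. In the simple case $\delta = 2$, it remains to handle odd offsets; here one uses that the multiplication
\begin{equation*}
A[g^{-1}]_{i,i+1} \otimes_{A[g^{-1}]_{i+1,i+1}} A[g^{-1}]_{i+1,i+2} \longrightarrow A[g^{-1}]_{i,i+2}
\end{equation*}
hits the unit $g_i \in A[g^{-1}]_{i,i+2}$, which forces $A[g^{-1}]_{i,i+1}$ to be an invertible bimodule (with inverse reconstructed from $g_i^{-1}$ and the factor in $A[g^{-1}]_{i+1,i+2}$). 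Combined with the even-offset case, strong gradedness holds for every pair $(i,j)$.

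Second, I would define the candidate equivalences
\begin{equation*}
F\colon {\sf gr}\,A[g^{-1}] \longrightarrow {\sf mod}\,A[g^{-1}]_{00},\quad F(M) = M_0,
\end{equation*}
\begin{equation*}
G\colon {\sf mod}\,A[g^{-1}]_{00} \longrightarrow {\sf gr}\,A[g^{-1}],\quad G(N)_j = N \otimes_{A[g^{-1}]_{00}} A[g^{-1}]_{0,j},
\end{equation*}
and check that $F\circ G \cong \operatorname{id}$ is trivial (the degree-$0$ part recovers $N\otimes_{A[g^{-1}]_{00}} A[g^{-1}]_{00}=N$), while $G\circ F \cong \operatorname{id}$ via the natural multiplication map $M_0\otimes_{A[g^{-1}]_{00}} A[g^{-1}]_{0,j}\to M_j$. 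The latter is bijective because strong gradedness makes $A[g^{-1}]_{0,j}$ an invertible bimodule, so tensoring by it is an exact and faithful operation. Preservation of the noetherian hypothesis on both sides is automatic since $A[g^{-1}]_{00}$ is noetherian (as $A$ is, by Theorem~\ref{thm.Anoeth}) and invertible bimodules carry finitely generated modules to finitely generated modules. The principal obstacle is the strong-gradedness check in the simple case at odd offsets, which requires tracking how the normal element $g_i\in A_{i,i+2}$ interacts with the intermediate degree $A_{i,i+1}$; once it is in place, the rest of the argument is formal Dade-style bookkeeping.
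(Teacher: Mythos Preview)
Your proposal is correct and follows exactly the paper's approach: the paper's entire proof is the single sentence ``Now $A[g^{-1}]$ is strongly graded, so we immediately obtain the following,'' and you have supplied the details behind that assertion, including the Dade-style equivalence via $M\mapsto M_0$ and its quasi-inverse. Your verification of strong gradedness in the $\delta=2$ case (writing $g_i$ as a sum of products from $A_{i,i+1}\otimes A_{i+1,i+2}$ and then multiplying by $g_i^{-1}$ to exhibit $e_i$ in $A[g^{-1}]_{i,i+1}\cdot A[g^{-1}]_{i+1,i}$) is the right mechanism, though the phrase ``forces $A[g^{-1}]_{i,i+1}$ to be an invertible bimodule'' could be stated more directly as the strong-gradedness identity itself.
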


We need one more lemma.

\begin{lemma} \label{lemma.Aginfinite}
Let $0 \neq  x \in A_{i,i+1}$ which is linearly independent from $g_i$ if $\delta = 1$. Then $M:=e_i A/xA$ is $g$-torsion free. In particular, $e_iA[g^{-1}]$ has infinite length as an $A[g^{-1}]$-module.
\end{lemma}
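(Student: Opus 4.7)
The plan is to deduce that $M$ is $g$-torsion free from a snake-lemma argument applied to the short exact sequence $0 \to e_{i+1}A \xrightarrow{\,\cdot x\,} e_iA \to M \to 0$, which is exact since $x$ is a non-zero-divisor by Lemma~\ref{lemma.Vnzd}. Right multiplication by $g$ gives a morphism from this sequence (twisted by $\phi^{-1}$ and shifted by $-\delta$) to itself, commutative by associativity of multiplication. On $e_{i+1}A$ and $e_iA$, right multiplication by $g$ is injective by Proposition~\ref{prop:gisnzd}, with cokernels $e_{i+1}B$ and $e_iB$ where $B = A/I$; the cokernel on the right is $M/Mg$. The snake lemma then produces
\begin{equation*}
0 \to \ker(g\colon M^{\phi^{-1}}(-\delta) \to M) \to e_{i+1}B \xrightarrow{\bar{x}} e_iB \to M/Mg \to 0,
\end{equation*}
so $M$ is $g$-torsion free provided $\bar{x}$ is injective.

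By Theorem~\ref{thm.Anoeth}, $B$ is a twisted ring on the field $F$ (when $V$ is simple) or $K$ (when $V$ is non-simple), and in particular is a domain, so left multiplication on $e_{i+1}B$ by any nonzero element of $B_{i,i+1}$ is injective. It therefore suffices to verify $\bar{x} \neq 0$ in $B_{i,i+1}$. When $\delta = 2$ the graded piece $I_{i,i+1}$ vanishes for degree reasons, so $\bar{x} = x \neq 0$; when $\delta = 1$, $I_{i,i+1}$ is the one-dimensional bimodule $Kg_i$, making $\bar{x} \neq 0$ precisely the hypothesis that $x$ is linearly independent from $g_i$.

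For the second assertion, I plan to iterate. Since $M \neq 0$ is $g$-torsion free, $M[g^{-1}] \neq 0$, giving a strict inclusion $xA[g^{-1}] \subsetneq e_iA[g^{-1}]$. The isomorphism $e_{i+1}A \xrightarrow{\sim} xA$ of right $A$-modules transports the first half of the lemma applied at $i+1$ (choosing $y \in A_{i+1,i+2}$ nonzero and, if $\delta = 1$, linearly independent from $g_{i+1}$) to a strict inclusion $xyA[g^{-1}] \subsetneq xA[g^{-1}]$. Iterating yields an infinite strictly decreasing chain of submodules of $e_iA[g^{-1}]$, so it has infinite length. The only real subtlety is keeping track of the twists and shifts in the snake-lemma diagram; once that is set up correctly, the rest of the argument is transparent.
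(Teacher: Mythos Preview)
Your proof is correct and takes a genuinely different route from the paper's. The paper argues via Hilbert functions: since $A$ is a domain, $f_M(j)=1$ for $j\geq i$; since $\operatorname{pd} M = 1$, Lemma~\ref{lemma.pd2} gives $\underline{\Hom}_A(\bar A,M)=0$ and hence $\tau M=0$; and since $g_i\notin xA$, one checks that $Mg$ also has Hilbert function $1$ in degrees $\geq i+\delta$, which forces multiplication by $g$ to be bijective in each degree. Your snake-lemma argument instead reduces the question directly to injectivity of left multiplication by $\bar x$ on $e_{i+1}B$, which follows from the domain property of the twisted ring $B=A/I$ established in Section~\ref{section.twisted}. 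This is more structural and bypasses both the degreewise dimension count and the appeal to Lemma~\ref{lemma.pd2} (and through it to the Auslander-regular machinery). The paper's route does yield the slightly sharper statement that $M[g^{-1}]$ is simple, so that each step in the descending chain has colength exactly one; but for the purpose of showing infinite length your iteration with strict inclusions is equally effective.
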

\begin{proof}
Recall that $A$ is a domain so the Hilbert function $f_{M} = 1$ in degrees $\geq i$.  In addition, by Lemma~\ref{lemma.pd2}, $\underline{\Hom}_A(\bar{A}, M) = 0$ since $\operatorname{pd}\, M = 1$. Thus, $\tau M=0$.  Now by choice of $x$, we know that $g_i \notin xA$ so $Mg$ must also be a module whose Hilbert function is 1 in degrees $\geq i+\delta$. This proves that $M$ is indeed $g$-torsion free.   It follows that $xA[g^{-1}]$ is a submodule of $e_iA[g^{-1}]$ of colength 1. We may of course, continue finding submodules of colength 1 in $xA[g^{-1}]$ and so forth to show that $e_iA[g^{-1}]$ has infinite length.
\end{proof}

Let ${\sf fl}\, A[g^{-1}]_{00}$ denote the category of finite length objects in ${\sf mod}\, A[g^{-1}]_{00}$.

\begin{proposition} \label{prop.Fasmodulecat}
The category equivalence of Lemma~\ref{lemma.Aginvstrong} induces an equivalence of categories ${\sf fl}\, A[g^{-1}]_{00} \cong {\sf F}$.
\end{proposition}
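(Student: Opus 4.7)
The strategy is to construct quasi-inverse functors directly. For $\mathcal{N} \in {\sf F}$, Lemma~\ref{lemma.omegagtorfree} shows that multiplication by $g$ is invertible on $\omega\mathcal{N}$, so $\omega\mathcal{N}$ becomes a graded $A[g^{-1}]$-module; define $\Psi(\mathcal{N}) := (\omega\mathcal{N})_0$, the corresponding $A[g^{-1}]_{00}$-module under Lemma~\ref{lemma.Aginvstrong}. By Corollary~\ref{cor.hilbconstant}, $f_{\omega\mathcal{N}}$ is constant, so $(\omega\mathcal{N})_0$ is finite-dimensional over $K$ and hence of finite length over $A[g^{-1}]_{00}$. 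In the opposite direction, for $L \in {\sf fl}\, A[g^{-1}]_{00}$ with corresponding graded $A[g^{-1}]$-module $\tilde{L}$, set $\Omega(L) := \pi \tilde{L}$, where $\tilde{L}$ is viewed as an $A$-module by restriction.

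To see $\Omega(L) \in {\sf F}$, note that each $\tilde{L}_i$ has the same finite $K$-dimension $d := \dim_K L$ because right multiplication by $g_i$ is a bijection $\tilde{L}_i \to \tilde{L}_{i+\delta}$. This bijectivity also shows $\tilde{L}_{\geq N}$ is generated as an $A$-module by the finite-dimensional space $\bigoplus_{k=0}^{\delta-1}\tilde{L}_{N+k}$, so $\tilde{L}_{\geq N}$ is noetherian and $\pi\tilde{L} = \pi\tilde{L}_{\geq N}$ is coherent. Invertibility of $g$ on $\tilde{L}$ yields $g$-torsion-freeness of $\pi\tilde{L}$, while the bounded Hilbert function forces $\dim\pi\tilde{L} \leq 1$, so $\pi\tilde{L}$ is torsion; hence $\Omega(L) \in {\sf F}$.

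The main step is verifying that $\Psi$ and $\Omega$ are mutually inverse. The direction $\Omega\Psi(\mathcal{N}) \cong \mathcal{N}$ reduces to the standard identity $\pi\omega \cong \mathrm{id}$. The other direction $\Psi\Omega(L) \cong L$ reduces to showing the unit $\tilde{L} \to \omega\pi\tilde{L}$ is an isomorphism, since then $(\omega\pi\tilde{L})_0 = \tilde{L}_0 = L$. I would control its kernel and cokernel via the standard four-term exact sequence
$$0 \to \tau\tilde{L} \to \tilde{L} \to \omega\pi\tilde{L} \to R^1\tau(\tilde{L}) \to 0,$$
and argue that both outer terms vanish. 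The key observation is that for any graded-torsion $A$-module $T$ (in particular $A/A_{\geq n}$), we have $T \otimes_A A[g^{-1}] = 0$ since $T$ is annihilated by some $g^n$; combined with flatness of the Ore localization $A[g^{-1}]$ and the fact that $\tilde{L}$ is already an $A[g^{-1}]$-module, change-of-rings gives $\uExt^i_A(T, \tilde{L}) \cong \uExt^i_{A[g^{-1}]}(T\otimes_A A[g^{-1}], \tilde{L}) = 0$ for all $i \geq 0$. Taking the direct limit over the cofinite ideals $A_{\geq n}$ yields $R^i\tau(\tilde{L}) = 0$ for $i = 0, 1$, completing the proof. The main obstacle is the bookkeeping needed to transport the standard local-cohomology formalism of \cite{az} to the $\mathbb{Z}$-indexed setting; conceptually nothing new is required.
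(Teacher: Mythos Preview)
Your overall strategy---build explicit quasi-inverses $\Psi(\mathcal{N})=(\omega\mathcal{N})_0$ and $\Omega(L)=\pi\tilde L$, then verify the unit $\tilde L\to\omega\pi\tilde L$ is an isomorphism via the change-of-rings identity $\uExt^i_A(T,\tilde L)\cong\uExt^i_{A[g^{-1}]}(T\otimes_A A[g^{-1}],\tilde L)=0$---is sound, and genuinely different from the paper's route. The paper instead shows the single functor $\mathcal{M}\mapsto\omega\mathcal{M}$ is fully faithful by a direct Hom computation, and essentially surjective by choosing a noetherian $A$-submodule $M\subset\tilde L$, applying Grothendieck splitting (Theorem~\ref{thm.Grothendieck}, Proposition~\ref{prop.torsionsplitsoff}) to $\pi M$, and invoking Lemma~\ref{lemma.Aginfinite} to kill any line-bundle summand. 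Your local-cohomology argument is cleaner and avoids the full faithfulness computation.

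There is, however, a genuine gap in your verification that $\Omega(L)\in{\sf F}$. You assert that each $\tilde L_i$ has finite $K$-dimension $d=\dim_K L$, and use this to conclude $\tilde L_{\ge N}$ is a noetherian $A$-module (hence $\pi\tilde L$ coherent) and that its Hilbert function is bounded (hence $\pi\tilde L$ torsion). But finite length over $A[g^{-1}]_{00}$ does not a priori imply finite $K$-dimension: that would require knowing simple $A[g^{-1}]_{00}$-modules are finite-dimensional over $K$, which fails for noetherian domains in general (e.g.\ Weyl algebras). In the paper this is exactly the content hidden in Lemma~\ref{lemma.Aginfinite}: since $e_0A[g^{-1}]$ has infinite length, $A[g^{-1}]_{00}$ is not a division ring, so any simple module is a proper quotient and one can then bound its size via the critical/pure structure of $e_iA$ (Theorem~\ref{thm.Adomain}). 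Alternatively, follow the paper and pick a noetherian $A$-submodule $M\subset\tilde L$ with $M[g^{-1}]=\tilde L$; your $R^i\tau$-vanishing argument applied to the localization map $M\to\tilde L$ then identifies $\omega\pi M$ with $\tilde L$, and Lemma~\ref{lemma.Aginfinite} plus Grothendieck splitting force $\pi M\in{\sf F}$. Either way you cannot avoid an input equivalent to Lemma~\ref{lemma.Aginfinite}. (A minor side point: when $\delta=2$ the bijection $g_i$ only relates $\tilde L_i$ and $\tilde L_{i+2}$, not $\tilde L_0$ and $\tilde L_1$; this does not affect the argument once the finite-dimensionality gap is closed.)
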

\begin{proof}
We first identify the finite length subcategory of ${\sf Proj}\, A[g^{-1}]$. Note that there is a natural localization functor ${\sf Proj}\, A \ra {\sf Proj}\, A[g^{-1}] = {\sf Gr}\, A[g^{-1}]$. Now if $\cM$ is a torsion $g$-torsion free coherent sheaf on $\mathbb{P}^{nc}(V)$, then its image in  ${\sf Proj}\, A[g^{-1}]$ also has finite length. Conversely, given a finite length $A[g^{-1}]$-module $\tilde{M}$, we can find a noetherian $g$-torsion free $A$-module $M$ such that $\tilde{M} = M[g^{-1}]$. Using Theorem~\ref{thm.Grothendieck} and Proposition~\ref{prop.torsionsplitsoff}, we may assume that $M = M_f \oplus M_t$ where $M$ is graded free and $M_t$ is torsion. If $M_f\neq 0$ then its image in ${\sf Proj}\, A[g^{-1}]$ has infinite length by Lemma~\ref{lemma.Aginfinite}. Hence the finite length objects of  ${\sf gr}\, A[g^{-1}]$ are precisely those of the form $M[g^{-1}]$ where $M$ is a torsion $g$-torsion free noetherian $A$-module.

Let $\cM, \mathcal{N}$ be a $g$-torsion free torsion coherent sheaves on $\mathbb{P}^{nc}(V)$. We wish to show that the functor $\cM \mapsto \omega \cM [g^{-1}]$ is an equivalence of ${\sf F}$ with the finite length subcategory of ${\sf gr}\, A[g^{-1}]$. To this end, let $M = \omega \mathcal{M}, N = \omega\mathcal{N}$. Lemma~\ref{lemma.omegagtorfree} shows that $M = M[g^{-1}], N= N[g^{-1}]$. Then
\begin{eqnarray*}
\Hom_{\mathbb{P}^{nc}(V)}(\cM,\mathcal{N}) & = & \Hom_A(M,N) \\
& = & \Hom_A(M[g^{-1}],N[g^{-1}]) \\
& = & \Hom_{A[g^{-1}]}(M[g^{-1}],N[g^{-1}]).
\end{eqnarray*}
Combining with Lemma~\ref{lemma.Aginvstrong} yields the proposition.
\end{proof}

We next show that the non-commutative ``affine co-ordinate ring'' $A[g^{-1}]_{00}$ is well understood when the bimodule $V$ is non-simple. We use the following notation: given a ring $R$ and automorphism $\alpha$, we let $R[z;\alpha]$ denote the skew polynomial ring with defining relation $zr = \alpha(r) z$, whilst if $\partial$ is a derivation of $R$, we let $R[z;\partial]$ be the skew polynomial ring with defining relation $zr = rz + \partial{r}$.

\begin{proposition}  \label{prop.oreext}
Suppose $V$ is non-simple so we are either in the split case (1a) or non-split case (1b) of Lemma~\ref{lemma.twosidedclass}. In the notation of that lemma, either
\begin{enumerate}
\item $V$ is split, in which case $\sigma\tau^{-1}$ is an automorphism of $K_0$ and $A[g^{-1}]_{00} \cong K_0[z; \sigma\tau^{-1}]$, or
 \item $V$ is non-split, in which case $\delta\sigma^{-1}$ is a derivation of $K_0$ and $A[g^{-1}]_{00} \cong K_0[z; \delta\sigma^{-1}]$.
\end{enumerate}
\end{proposition}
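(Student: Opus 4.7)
The strategy is to construct an explicit $z \in A[g^{-1}]_{00}$, verify the required skew relation with $K_0$, and then count left $K_0$-dimensions to conclude that $\{1,z,z^2,\ldots\}$ is a basis. In both cases set $z := x g_0^{-1} = xy^{-1}$. In the split case (1a), $ya = \tau(a)y$ and $xb = \sigma(b)x$ give
\[
za = x\tau^{-1}(a)y^{-1} = \sigma\tau^{-1}(a)\,z,
\]
while in the non-split case (1b), $ya = \sigma(a)y$ and $xb = \sigma(b)x + \delta(b)y$ give
\[
za = \bigl(ax + \delta\sigma^{-1}(a)y\bigr)y^{-1} = az + \delta\sigma^{-1}(a).
\]
That $\sigma\tau^{-1}$ is an automorphism of $K_0$ is clear, and a routine check of the Leibniz rule (using that $\delta$ is a $(\sigma,\sigma)$-derivation) shows $\delta\sigma^{-1}$ is a derivation of $K_0$. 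Hence the relations yield well-defined ring homomorphisms $\iota\colon K_0[z;\sigma\tau^{-1}] \to A[g^{-1}]_{00}$ or $\iota\colon K_0[z;\delta\sigma^{-1}] \to A[g^{-1}]_{00}$ respectively, and it remains to show each is an isomorphism.

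For surjectivity, set $G_n := g_0g_1\cdots g_{n-1} \in A_{0,n}$ and $F_n := A_{0,n} G_n^{-1} \subseteq A[g^{-1}]_{00}$. A standard Ore calculation gives $F_n \subseteq F_{n+1}$ and $\bigcup_n F_n = A[g^{-1}]_{00}$. I claim by induction on $n$ that $F_n \subseteq \sum_{k=0}^n K_0 z^k$. Given $a \in A_{0,n}$, use the surjective multiplication map $V \otimes_{K_1} A_{1,n} \twoheadrightarrow A_{0,n}$ (valid because $A$ is generated in consecutive bidegrees, and $V$ has right $K_1$-basis $\{x,y\}$) to write $a = xb + yc$ with $b,c \in A_{1,n}$. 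By normality of $g_0$ (Proposition~\ref{prop.normalfamily}), there exist $b',c' \in A_{0,n-1}$ with $g_0b = b'g_{n-1}$ and $g_0c = c'g_{n-1}$; then
\[
yc \cdot G_n^{-1} = g_0 c \cdot G_n^{-1} = c' G_{n-1}^{-1} \in F_{n-1}, \qquad xb \cdot G_n^{-1} = z \cdot g_0 b \cdot G_n^{-1} = z\,b' G_{n-1}^{-1} \in z F_{n-1}.
\]
In both cases of the proposition, the relation from the previous paragraph yields $zK_0 \subseteq K_0z + K_0$, so the induction closes and $\iota$ is surjective.

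For injectivity, I compare left $K_0$-dimensions. Lemma~\ref{lemma.eulerses}(2) gives $\dim_{K_0} A_{0,n} = n+1$. Since $A$ is a domain (Theorem~\ref{thm.Adomain}) with each $g_i$ a non-zero-divisor (Proposition~\ref{prop:gisnzd}), the localization map $A \hookrightarrow A[g^{-1}]$ is injective, and right multiplication by the unit $G_n^{-1}$ yields a left $K_0$-linear isomorphism $A_{0,n} \xrightarrow{\sim} F_n$, so $\dim_{K_0} F_n = n+1$. Combined with $F_n \subseteq \sum_{k=0}^n K_0 z^k$, this forces $\{1,z,\ldots,z^n\}$ to be $K_0$-linearly independent on the left for every $n$. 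Hence $\iota$ is injective, and therefore an isomorphism.

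I expect the main obstacle to be the inductive surjectivity step: carefully exploiting normality of $g_0$ to rewrite $g_0 b = b'g_{n-1}$, and packaging the computation so that the single observation $zK_0 \subseteq K_0z + K_0$ handles the split and non-split cases uniformly. Once that reduction is in place, the dimension count renders the injectivity automatic.
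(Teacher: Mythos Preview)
Your proof is correct and follows the same approach as the paper's: define $z = xy^{-1}$, verify the skew-polynomial relation, and then use the left $K_0$-linear isomorphism $A_{0,n} \xrightarrow{\cdot G_n^{-1}} F_n$ together with $\dim_{K_0}A_{0,n}=n+1$ to force $\{1,z,\dots,z^n\}$ to be independent. The only difference is that the paper asserts generation of $A[g^{-1}]_{00}$ by $K_0$ and $z$ ``directly,'' whereas you supply the explicit inductive filtration argument $F_n\subseteq\sum_{k\le n}K_0z^k$; this is a welcome elaboration rather than a different route.
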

\begin{proof}
In both cases, one verifies directly that $A[g^{-1}]_{00}$ is generated as a ring by $K_0$ and $z = xy^{-1}$ in the notation of Section~\ref{section.easytwist}. The defining relations for the skew polynomial ring are easily checked for $A[g^{-1}]_{00}$. For example, if $V$ is non-split  and $r \in K_0$ then
$$ xy^{-1} r = x \sigma^{-1}(r) y^{-1} = (rx + \delta \sigma^{-1}(r)y)y^{-1} = rxy^{-1}  + \delta \sigma^{-1}(r).$$
It remains only to show that the $z^i$ are left linearly independent over $K_0$. The easiest way to see this is to consider the injective right multiplication by $(g_0g_1\ldots g_{n-1})^{-1}$ map $\rho: A_{0n} \hookrightarrow A[g^{-1}]_{00}$. The result follows since the left dimension $\dim_{K_0} A_{0n} = n+1$ whilst $\operatorname{im} \rho$ is spanned over $K_0$ by $1,z, \ldots, z^n$.
\end{proof}

\subsection{Classification of regular modules} \label{subsec.regular}
In this subsection, we use the classification results for torsion sheaves to give an explicit description of regular $\Lambda$-modules. Putting together Corollary~\ref{cor.regularistorsion}, Theorem~\ref{thm.splitcategory} and Proposition~\ref{prop.Fasmodulecat} we immediately obtain the following description of the category of regular $\Lambda$-modules.

\begin{thm}  \label{thm.regularmod}
The category of regular $\Lambda$-modules is equivalent to the product category ${\sf fl} A[g^{-1}]_{00} \times {\sf T}$ where ${\sf T}$ is uniserial.
\end{thm}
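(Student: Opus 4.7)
The proof is essentially an assembly of three earlier results, so the plan is to chain the equivalences and then simply invoke the uniserial property. First I would apply Corollary~\ref{cor.regularistorsion}, which sends regular $\Lambda$-modules under $\Phi^{-1}$ (up to shift) to the full subcategory of torsion coherent sheaves on $\mathbb{P}^{nc}(V)$. This reduces the problem to understanding the category of torsion coherent sheaves.

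Next I would invoke Theorem~\ref{thm.splitcategory}, which splits the category of torsion coherent sheaves as ${\sf T}\times {\sf F}$ according to whether a sheaf is $g$-torsion or $g$-torsion-free. Composing with the equivalence in the previous step, this produces an equivalence between the category of regular $\Lambda$-modules and ${\sf T}\times{\sf F}$.

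Then I would apply Proposition~\ref{prop.Fasmodulecat}, which provides an equivalence ${\sf fl}\, A[g^{-1}]_{00}\simeq {\sf F}$, obtained from the localization functor $\cM\mapsto\omega\cM[g^{-1}]$. Substituting yields an equivalence between the category of regular $\Lambda$-modules and ${\sf fl}\, A[g^{-1}]_{00}\times{\sf T}$, which is exactly the desired product category.

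Finally, to complete the statement, I would note that ${\sf T}$ is uniserial by Corollary~\ref{cor.uniserial}. Since every step is a direct quotation of a previously established equivalence, there is no serious obstacle; the only thing worth double-checking is that the product decomposition obtained from Theorem~\ref{thm.splitcategory} is compatible with the equivalence of Corollary~\ref{cor.regularistorsion} in the sense that both factors land inside the subcategory of regular modules, but this is automatic since regular modules correspond to \emph{all} torsion sheaves and the decomposition is internal to the torsion subcategory.
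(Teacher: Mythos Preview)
Your proposal is correct and matches the paper's own proof exactly: the paper obtains the theorem by ``putting together Corollary~\ref{cor.regularistorsion}, Theorem~\ref{thm.splitcategory} and Proposition~\ref{prop.Fasmodulecat}'', which is precisely the chain of equivalences you describe, with the uniseriality of ${\sf T}$ coming from Corollary~\ref{cor.uniserial}.
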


We now use the derived equivalence $\Phi$ to give a description of these regular modules. We start with the ones corresponding to ${\sf T}$, the $g$-torsion sheaves.

Let $\mathcal{B}_0$ be the unique simple $g$-torsion sheaf. It's quite easy to determine $\Phi(\mathcal{B}_0)$. If $V$ is simple so is given by a field $F$ in the notation of Lemma~\ref{lemma.twosidedclass}(2), then $\Phi(\mathcal{B}_0)$ is the $\Lambda$-module
$$ m: F \otimes_{K_1} F \ra F $$
where $m$ is field multiplication. If $V$ is non-simple, then $g_0 \in V$ generates a proper sub-bimodule $Kg_0$ and the quotient $\bar{V} := V/ Kg_0$ is an invertible $K-K$-bimodule. In this language, $\Phi(\mathcal{B}_0)$ is the natural map
$$ K \otimes_K V \ra K \otimes_K \bar{V} \cong K .$$
Consider now the indecomposable $n$-fold extension $\mathcal{B}_0(n)$ of $\mathcal{B}_0$ with itself. The following proposition allows us to ``read off'' $\Phi(\mathcal{B}_0(n))$ from the non-commutative symmetric algebra.
\begin{proposition}  \label{prop.phiT}
Let $g^n = g_{1-n\delta}g_{1-(n-1)\delta}\ldots g_{1-\delta}$. Then
$\Phi(\mathcal{B}_0(n))$ is given by the composite map
$$ A_{1-n\delta,0} \otimes_K V \xrightarrow{m} A_{1-n\delta,1} \xrightarrow{q} \frac{A_{1-n\delta,1}}{g^nA_{11}}$$
where $m$ is multiplication in $A$ and $q$ is the canonical quotient map.
\end{proposition}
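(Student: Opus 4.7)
The plan is to realize $\mathcal{B}_0(n)$ explicitly as $\pi N_n$, where
$$N_n := e_{1-n\delta}A \,/\, g^n \cdot e_1 A,$$
and then compute $\Phi(\pi N_n) = \operatorname{Hom}_{\mathbb{P}^{nc}(V)}(\mathcal{A}_0 \oplus \mathcal{A}_1, \pi N_n)$ by applying $\operatorname{Hom}(\mathcal{A}_j,-)$ to the defining short exact sequence
$$0 \to e_1 A \xrightarrow{g^n} e_{1-n\delta}A \to N_n \to 0,$$
which is left exact because $A$ is a domain (Theorem~\ref{thm.Adomain}, Proposition~\ref{prop:gisnzd}). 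Once the identification $\pi N_n \cong \mathcal{B}_0(n)$ is in place and the Ext contributions vanish, the stated formula will follow by directly reading off $(\omega\pi N_n)_0$ and $(\omega\pi N_n)_1$.

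For the identification $\pi N_n \cong \mathcal{B}_0(n)$ I would verify three points. (i) $\pi N_n \in \sf T$, because $g^n$ annihilates the generator $\bar e_{1-n\delta}$, hence all of $N_n$. (ii) $\pi N_n$ has length $n$ in $\sf T$: from the Euler dimension count $\dim_{K_m}A_{ij}=j-i+1$ in Lemma~\ref{lemma.eulerses}(2) and the short exact sequence above, $f_{N_n}$ is the constant $n\delta$ in large degree, so $f_{\omega\pi N_n}\equiv n\delta$ by Corollary~\ref{cor.hilbconstant}; since $f_{\omega\mathcal{B}_0}\equiv \delta$ by Proposition~\ref{prop.Bisimple} and $\sf T$ is uniserial (Corollary~\ref{cor.uniserial}), the length is $n$. (iii) $\pi N_n$ is indecomposable, which I would prove by showing its cosocle has length one. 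The adjunction together with Proposition~\ref{prop.Bisimple} gives $\operatorname{Hom}_{\mathbb{P}^{nc}(V)}(\pi N_n,\mathcal{B}_0)\cong \operatorname{Hom}_A(N_n,e_0C)$, and a graded $A$-module map $N_n\to e_0 C$ is freely specified by the image $c\in C_{0,1-n\delta}$ of the generator subject to $c\cdot g^n=0$ in $C$. The crucial observation is that this constraint is vacuous: each $g_i\in I$, so $g^n$ maps to $0$ in $B=A/I$ and therefore to $0$ in $C$. Thus $\operatorname{Hom}(\pi N_n,\mathcal{B}_0)\cong C_{0,1-n\delta}$, which is one-dimensional over $\operatorname{End}\mathcal{B}_0=C_{00}$ by the structure of the full twisted ring (and Corollary~\ref{cor.endBi}). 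In the uniserial category $\sf T$ with unique simple $\mathcal{B}_0$, cosocle length one characterizes indecomposables, so $\pi N_n\cong \mathcal{B}_0(n)$.

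For the computation of $\Phi(\pi N_n)$, since $\pi N_n$ is torsion, $\operatorname{Hom}(\pi N_n,\mathcal{A}_j)=0$ (a torsion sheaf admits no nonzero map to the torsion-free $\mathcal{A}_j$), so Serre duality (Theorem~\ref{thm.serre}) yields $\operatorname{Ext}^1(\mathcal{A}_j,\pi N_n)=0$ and $\Phi(\pi N_n)$ is concentrated in degree zero. Applying $\operatorname{Hom}(\mathcal{A}_j,-)$ to the defining short exact sequence, together with (\ref{eqn.homs}) and the vanishing $\operatorname{Ext}^1(\mathcal{A}_j,\mathcal{A}_1)\cong A^*_{j,-1}=0$ from (\ref{eqn.linebundles3}) for $j\in\{0,1\}$, yields
$$\operatorname{Hom}(\mathcal{A}_0,\pi N_n)=A_{1-n\delta,0},\qquad \operatorname{Hom}(\mathcal{A}_1,\pi N_n)=A_{1-n\delta,1}/g^n A_{11}.$$
Under the identification $\operatorname{Hom}(\mathcal{A}_j,\cM)\cong(\omega\cM)_j$, the $V$-action on $\Phi(\cM)$ given by precomposition with $V=\operatorname{Hom}(\mathcal{A}_1,\mathcal{A}_0)=A_{01}$ is simply right multiplication by $A_{01}$ in $\omega\cM$, which here is the composite $q\circ m$ of the statement. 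The main obstacle is the indecomposability argument in step (iii); once one observes that $g^n$ already dies in $C$, the otherwise delicate cosocle computation collapses to the structure of the twisted ring $C$ and everything else is routine.
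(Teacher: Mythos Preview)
Your argument is correct. The overall strategy---realize $\mathcal{B}_0(n)$ as $\pi(e_{1-n\delta}A/g^nA)$ and then read off degrees $0$ and $1$ of the section functor---is exactly the paper's, but the two proofs diverge in how each step is executed.

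For the identification $\pi N_n\cong\mathcal{B}_0(n)$, the paper simply asserts it; your cosocle computation via $\operatorname{Hom}_A(N_n,e_0C)\cong C_{0,1-n\delta}$ (using that $g^n$ already vanishes in $B=A/I$ and hence in $C$) actually supplies the missing justification and is a nice use of Proposition~\ref{prop.Bisimple}. For the computation of $(\omega\pi N_n)_0$ and $(\omega\pi N_n)_1$, the paper takes a slightly different route: rather than running the long exact sequence for $\operatorname{Hom}(\mathcal{A}_j,-)$ and invoking the Ext-vanishing $\operatorname{Ext}^1(\mathcal{A}_j,\mathcal{A}_1)=0$, it observes that $\operatorname{pd}_A N_n=1$, so by Lemma~\ref{lemma.pd2} the module $N_n$ has no graded torsion and therefore embeds in $\omega\pi N_n$; since the Hilbert function of $N_n$ is already the constant $n\delta$ in degrees $\geq 0$ and $f_{\omega\pi N_n}\equiv n\delta$ by Corollary~\ref{cor.hilbconstant}, the embedding is an equality in non-negative degrees. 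Your approach avoids Lemma~\ref{lemma.pd2} at the cost of a Serre-duality step to conclude $\Phi(\pi N_n)$ is concentrated in degree zero (the paper does not need this because it never leaves the module $N_n$); the paper's approach is a touch shorter but leans on the Auslander-regular machinery. Both are clean and yield the same description of $\Phi(\mathcal{B}_0(n))$.
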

\begin{proof}
Consider the exact sequence
$$ 0 \ra e_{1}A \xrightarrow{g^n} e_{1-n\delta}A \ra e_{1-n\delta}A/g^n A \ra 0 .$$
Now $\mathcal{B}_0(n) = \pi(e_{1-n\delta}A/g^n A)$. Hence $\Phi(\mathcal{B}_0(n)$ is given by
$$ \omega\pi(e_{1-n\delta}A/g^n A)_0 \otimes_K V \ra \omega\pi(e_{1-n\delta}A/g^n A)_1 .$$
Also,  $\operatorname{pd} e_{1-n\delta}A/g^n A = 1$ so $e_{1-n\delta}A/g^n A$ embeds in $\omega(e_{1-n\delta}A/g^n A)$ by Lemma~\ref{lemma.pd2}. Furthermore, the Hilbert function of $e_{1-n\delta}A/g^n A$ is the constant $n\delta$ in non-negative degrees so in fact $(e_{1-n\delta}A/g^n A)_i = \omega\pi(e_{1-n\delta}A/g^n A)_i$ for $i \geq 0$.
\end{proof}

We turn now to studying the regular modules corresponding to $g$-torsion free sheaves. To simplify notation, we let $A_g := A[g^{-1}]_{00}$. Note that the multiplication map $A_g \otimes_K V \ra A[g^{-1}]_{01}$ is surjective, so we will usually abbreviate $A[g^{-1}]_{01}= A_g V$. Let $\Psi: {\sf fl} A_g \xrightarrow{\cong} {\sf F}$ be the category equivalence of Proposition~\ref{prop.Fasmodulecat}. The proof of this proposition immediately gives
\begin{proposition}  \label{prop.phipsi}
The composite functor $\Phi\Psi$ sends a finite length $A_g$-module $M$ to the $\Lambda$-module
$$ m: M \otimes_K V \ra M \otimes_{A_g} A_g V $$
where $m$ is induced by inclusion $V \hookrightarrow A_g V$.
\end{proposition}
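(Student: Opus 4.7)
The plan is to compute $\Phi\Psi(M)$ by unraveling both functors separately and then matching them up. Under the Morita equivalence of Lemma~\ref{lemma.Aginvstrong}, a finite length $A_g$-module $M$ corresponds to the graded $A[g^{-1}]$-module $\tilde M := M \otimes_{A_g} e_0 A[g^{-1}]$, whose degree $i$ component is $M \otimes_{A_g} A[g^{-1}]_{0i}$; in particular $\tilde M_0 = M$ and $\tilde M_1 = M \otimes_{A_g} A_gV$. Viewing $\tilde M$ as an $A$-module by restriction, $\Psi(M) = \pi\tilde M$ by the proof of Proposition~\ref{prop.Fasmodulecat}. A preliminary step is to verify that $\omega\pi\tilde M = \tilde M$: the canonical comparison map $\tilde M \to \omega\pi\tilde M$ has torsion kernel and cokernel, but by Lemma~\ref{lemma.omegagtorfree} the target has $g$-multiplication a vector space isomorphism and thus is itself a graded $A[g^{-1}]$-module with constant Hilbert function, forcing both the kernel and cokernel to vanish.

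Next I would check that $\Phi\Psi(M)$ is concentrated in degree $0$ of the derived category, so that it is genuinely a $\Lambda$-module. Since $\mathbb{P}^{nc}(V)$ is hereditary by Corollary~\ref{cor.projAhereditary}, the only other possible cohomology is $\Ext^1(\mathcal{T}, \Psi(M)) = \Ext^1(\mathcal{A}_0,\Psi(M)) \oplus \Ext^1(\mathcal{A}_1,\Psi(M))$, and each summand vanishes by classical Serre duality (Theorem~\ref{thm.serre}) because $\Ext^1(\mathcal{A}_i,\Psi(M)) \cong {}^*\Hom(\Psi(M),\mathcal{A}_{i+2}) = 0$: the source is torsion while the target is torsion-free.

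Finally I would identify the $\Lambda$-module structure. By (\ref{eqn.cohom}), $\Phi\Psi(M) e_i = \Hom_{\mathbb{P}^{nc}(V)}(\mathcal{A}_i,\pi\tilde M) = (\omega\pi\tilde M)_i = \tilde M_i$, so the underlying $K_0 \times K_1$-module is $M \oplus (M \otimes_{A_g} A_gV)$ as required. The $V$-action on $\Phi\Psi(M)$ arises from the inclusion $V = \Hom(\mathcal{A}_1,\mathcal{A}_0) \hookrightarrow \End(\mathcal{T})$ by precomposition, and under the identification (\ref{eqn.cohom}) this becomes exactly the graded $A$-module multiplication $\tilde M_0 \otimes_K V \to \tilde M_1$. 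On a simple tensor, this multiplication sends $m \otimes v \in (M \otimes_{A_g} A_g) \otimes_K V$ to $m \otimes v \in M \otimes_{A_g} A_gV$, which is precisely the map induced by the inclusion $V \hookrightarrow A_gV$.

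The main subtlety will be the first step, confirming rigorously that $\omega\Psi(M)$ is the Morita-corresponding $A[g^{-1}]$-module $\tilde M$ rather than some proper enlargement of it by $\pi$-torsion; however, the $g$-invertibility of $\tilde M$ together with constancy of Hilbert functions (Corollary~\ref{cor.hilbconstant}) immediately rules out any such discrepancy. Once that identification is in hand, the remaining verifications are formal applications of (\ref{eqn.cohom}) and Serre duality.
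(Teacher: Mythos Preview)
Your proposal is correct and follows essentially the same approach as the paper. The paper's proof is a single line stating that the result follows immediately from the proof of Proposition~\ref{prop.Fasmodulecat}; your argument is precisely the unpacking of that claim, verifying explicitly that $\omega\pi\tilde M = \tilde M$ (via $g$-invertibility ruling out graded torsion), that $\Phi\Psi(M)$ lies in degree~$0$ (via Serre duality, already used in the proof of Proposition~\ref{prop.torsionsplitsoff}), and that the $V$-action matches the stated map.
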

To make the $\Lambda$-module in this proposition explicit, we need only compute the left $A_g$-module $A_gV$. We give a minimal presentation of this module which, by right exactness of $\otimes$ will allow us to compute explicitly the term $M \otimes_{A_g} A_g V $ above. Given a two-sided basis $x,y$ for $V$ we let $^*x,^*y$ be a dual basis in $^*V$ and $^{**}x,^{**}y$ be a dual basis of that in $^{**}V$.

\begin{proposition}  \label{prop.AgV}
If $V$ is non-simple, then $A_g V \cong A_g$ and in fact is freely generated by $y$. If $V$ is simple, then there is a presentation
$$A_g^{\oplus 2}
\xrightarrow{\begin{pmatrix} g_{-2}^{-1} \,^{**}x \, ^*x& g_{-2}^{-1} \,^{**}x \, ^*y \\ g_{-2}^{-1} \,^{**}y \, ^*x & g_{-2}^{-1} \,^{**}y \, ^*y \end{pmatrix}}
A_g^{\oplus 2}
\xrightarrow{\begin{pmatrix} x \\ y \end{pmatrix}} A_gV \ra 0.$$
\end{proposition}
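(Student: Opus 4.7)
The proof divides into the two cases.

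\emph{Non-simple case.} By Section~\ref{section.oned}, $\delta=1$ and $g_0=y \in V$, so $y$ is invertible in $A[g^{-1}]$. For any $v \in A_gV = A[g^{-1}]_{01}$, write $v = (vy^{-1})\,y$ with $vy^{-1}\in A_g$; hence $A_gV = A_g \cdot y$. Since $y$ has an inverse $y^{-1}\in A[g^{-1}]$, right multiplication by $y$ on $A_g$ is injective, so $y$ freely generates $A_gV$ as a left $A_g$-module, giving the isomorphism $A_gV \cong A_g$.

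\emph{Simple case.} I would localise the left-module Euler exact sequence (Lemma~\ref{lemma.eulerses}(3)) with $j=-1$:
$$0 \to Ae_{-1} \xrightarrow{c \mapsto (c\,^*x,\, c\,^*y)} (Ae_0)^{\oplus 2} \xrightarrow{(a,b)\mapsto ax+by} Ae_1 \to K_1 \to 0,$$
where the formula for the leftmost arrow comes from the generator $\,^*x\otimes x + \,^*y\otimes y$ of $Q_{-1}$. Since $K_1$ is concentrated in a single degree, it is annihilated by $g$, so $A[g^{-1}]\otimes_A K_1=0$. Applying the exact functor $A[g^{-1}]\otimes_A -$ and extracting degree $0$ yields an exact sequence of left $A_g$-modules
$$A[g^{-1}]_{0,-1} \xrightarrow{c \mapsto (c\,^*x,\, c\,^*y)} A_g^{\oplus 2} \xrightarrow{(x,y)^T} A_gV \to 0.$$

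It remains to present $A[g^{-1}]_{0,-1}$ by two generators. The same strategy applied to the Euler sequence with $j=-3$, now using that $K_{-1}$ is also $g$-torsion, produces the surjection
$$A[g^{-1}]_{0,-2}^{\oplus 2} \twoheadrightarrow A[g^{-1}]_{0,-1},\qquad (a,b) \mapsto a\,^{**}x + b\,^{**}y.$$
Since $g_{-2}\in A_{-2,0}$ is invertible in $A[g^{-1}]$, right multiplication by $g_{-2}^{-1}$ gives a left $A_g$-module isomorphism $A_g \xrightarrow{\sim} A[g^{-1}]_{0,-2}$, so $A[g^{-1}]_{0,-1}$ is generated as a left $A_g$-module by $g_{-2}^{-1}\,^{**}x$ and $g_{-2}^{-1}\,^{**}y$. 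Substituting these generators into the map $c \mapsto (c\,^*x, c\,^*y)$ produces exactly the two rows of the matrix $M$ in the statement, which establishes the presentation.

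The main obstacle is organising the bookkeeping: one must invoke the Euler sequence at two different shifts and use flatness of $A[g^{-1}]$ over $A$ to commute localisation past the kernel. Once this is set up, the composite $A_g^{\oplus 2} \xrightarrow{M} A_g^{\oplus 2} \to A_gV$ vanishes automatically by $c(\,^*x\cdot x + \,^*y\cdot y)=0$ in $A$, and the matrix entries are forced by the defining relations of the non-commutative symmetric algebra.
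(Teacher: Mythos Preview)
Your argument is correct and rests on the same core observation as the paper, namely that the relations among $x,y$ in $A_gV$ come from the Euler exact sequence. The packaging differs slightly: in the simple case the paper works element-wise, writing a general relation $g^{-n}ax+g^{-n}by=0$, clearing denominators to get $ax+by=0$ in $A$, and then invoking Lemma~\ref{lemma.eulerses}(2) to force $(a,b)=(c\,^*x,c\,^*y)$, whereas you obtain the same conclusion by localising the left-module Euler sequence and reading off degree $0$. Your second application of Euler (at $j=-3$) to surject onto $A[g^{-1}]_{0,-1}$ corresponds exactly to the paper's remark that $c$ has odd degree and hence the relations are generated by letting $c$ range over a basis of $A_{-2,-1}$; your formulation makes that step more transparent.

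In the non-simple case your route is genuinely shorter: you simply use that $g_0=y$ is invertible in $A[g^{-1}]$, so right multiplication by $y$ is already an isomorphism $A_g\to A_gV$. The paper instead still passes through the Euler relation to rewrite $x$ in terms of $y$. Both reach the same conclusion, but your argument avoids an unnecessary detour.
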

\begin{proof}
Let $x,y$ be a two-sided basis for $V$. We know $x,y$ then also generates the $A_g$-module $A_gV$. It remains then only to compute module relations. Any such relation can be written in the form
$$ g^{-n}a x + g^{-n} b y = 0 $$
for some $a,b \in A_{-n\delta ,0}$ and $g^{-n} = g_{-\delta}^{-1}g_{-2\delta}^{-1} \ldots g_{-n\delta}^{-1}$. This gives the relation $ax+by=0$ in $A$. The Euler exact sequence (Lemma~\ref{lemma.eulerses}(2)) shows that $(a \ b) = (c\,^*x \ c\, ^*y)$ for some $c \in A_{-n\delta,-1}$. If $\delta = 1$, then the relations are generated by $(a \ b) = (\,^*x \ \, ^*y)$. In this case, we may furthermore assume that $g_{-1} = \,^*x$ so the relation reduces to $x = - (g^{-1}  \,^*y) y$ which completes the cases when $V$ is non-simple. When $V$ is simple, then $\delta=2$ so $c$ has to have odd degree and the relations are given by letting $c$ vary over a basis of $A_{-2,-1}$. This completes the case where $\delta = 2$.
\end{proof}
Putting together these two propositions gives a classification of regular modules corresponding to $g$-torsion free sheaves which generalizes exactly the construction Ringel gives in the $V$ non-simple case in \cite[Theorem~7.4]{species}.

\end{document}